\newcommand{\N}{\mathbb{N}}
\newcommand{\Z}{\mathbb{Z}}
\newcommand{\R}{\mathbb{R}}
\newcommand{\C}{\mathbb{C}}
\newcommand{\ud}{\,\mathrm{d}}
\numberwithin{equation}{section}
\newtheorem{theorem}{Theorem}[section]
\newtheorem{lemma}[theorem]{Lemma}
\theoremstyle{remark}
\newtheorem{remark}[theorem]{Remark}
\begin{document}

\title[Optimal error bound for the biharmonic equation]{Optimal order finite
difference approximation of generalized solutions to the biharmonic equation in
a cube}
\author{Stefan Müller, Florian Schweiger, and Endre Süli}
\address{Hausdorff Center for Mathematics \& Institute for Applied Mathematics,
Universit\"at Bonn, Endenicher Allee 60, 53115 Bonn, Germany. Email:
\texttt{stefan.mueller@hcm.uni-bonn.de}}
\address{Institute for Applied Mathematics,
Universit\"at Bonn, Endenicher Allee 60, 53115 Bonn, Germany.
Email: \texttt{schweiger@iam.uni-bonn.de}}
\address{Mathematical Institute, University of Oxford, Radcliffe Observatory
Quarter,
Woodstock Road, Oxford OX2 6GG, UK. Email: \texttt{endre.suli@maths.ox.ac.uk}}
\date{\today}

\begin{abstract} We prove an optimal order error bound in the discrete $H^2(\Omega)$
norm for finite difference approximations of the first boundary-value problem for
the biharmonic equation in $n$ space dimensions, with $n \in \{2,\dots,7\}$, whose
generalized solution belongs to the Sobolev space $H^s(\Omega) \cap H^2_0(\Omega)$,
for $\frac{1}{2} \max(5,n) < s \leq 4$, where $\Omega = (0,1)^n$. The result
extends the range of the Sobolev index $s$ in the best convergence results
currently available in the literature to the maximal range admitted by the
Sobolev embedding of $H^s(\Omega)$ into $C(\overline\Omega)$ in $n$ space dimensions.
\end{abstract}
\subjclass[2010]{65N06 (primary), 31A30, 31B30 (secondary)}
\keywords{finite difference scheme, biplacian}
\maketitle
\section{Introduction}
The biharmonic equation arises in a number of problems in continuum mechanics,
including linear elasticity and the solution of the Stokes equations modelling
the flow of a viscous incompressible fluid; the biharmonic operator also
features in various nonlinear PDEs of practical relevance such as the Cahn--Hilliard
equation, which describes the process of phase separation in a binary alloy,
and the Ohta--Kawasaki model for the free energy of a diblock copolymer melt.
The convergence analysis of numerical methods for the approximate solution of
the biharmonic equation has been therefore of considerable interest. We shall not
attempt to review the vast literature in this area: the reader may wish to
consult the early papers by Tee \cite{Tee1963}, Bramble \cite{Bramble1966}, Smith
\cite{Smith1968,Smith1970}, and Ehrlich \cite{Ehrlich1971}, for example, for the first
analytical results in this direction. For the numerical analysis of finite
difference approximations of the biharmonic equation in the context of the
approximate solution of the Navier--Stokes equations in planar domains, we point
the reader to the book of Ben-Artzi et al. \cite{Ben-Artzi2013}. In these works the
data and the solution to the boundary-value problems under consideration were
assumed to have (sufficiently) high regularity in spaces of continuously
differentiable functions.

Finite difference schemes for the biharmonic equation with nonsmooth source
terms were considered by Lazarov \cite{Lazarov1981}, Gavrilyuk et al. \cite{Gavrilyuk1983b},
and Ivanovi\'{c} et al. \cite{Ivanovich1986}, for example.
For a detailed survey of the relevant literature we refer the reader to the
monograph of Jovanovi\'{c} and S\"uli \cite{Jovanovic2014}, devoted to the finite
difference approximation of linear partial differential equations with
generalized solutions.

Our objective in this paper is to prove an optimal-order error bound for finite
difference approximations of the first boundary-value problem for the biharmonic
equation in $n$ space dimensions, with $n \in \{2,\dots,7\}$, whose generalized
solution belongs to the Sobolev space $H^s(\Omega) \cap H^2_0(\Omega)$, for
$\frac{1}{2} \max(5,n) < s \leq 4$, where $\Omega = (0,1)^n$. One of our main results, Theorem \ref{t:mainthm},
improves \cite[Theorem 2.69]{Jovanovic2014} (where the theorem is proved for $n=2$ and $s<\frac72$) as
well as the main result in \cite{Gavrilyuk1983} (where the theorem is proved for
$n=2$ under the assumption that the third normal derivative of $u$ vanishes at
the boundary). The restrictions on the range of $s$ in \cite[Theorem 2.69]{Jovanovic2014} and on the third normal derivative of $u$ in \cite{Gavrilyuk1983} arise for the following reason: in order to compare the finite difference approximation with the original problem one needs an extension of the (generalized) solution $u$ from $\Omega$ to $\mathbb{R}^n \setminus \Omega$ that preserves the Sobolev regularity of $u$ and has, ideally, zero discrete boundary values. The assumptions in \cite[Theorem 2.69]{Jovanovic2014} and in \cite{Gavrilyuk1983} permit the use of
the symmetric extension of $u$ across $\partial\Omega$ for that
purpose. In our setting, with $\frac{1}{2} \max(5,n) < s \leq 4$, this is no longer possible, and the main novelty of our work is to use a different, carefully chosen, extension of $u$. This extension will no longer have zero boundary values, but we will show that they can be made small (in an appropriate norm, in terms of positive powers of the discretization parameter $h$), so that we can still close the argument. The strategy of the proof is described in Section 1.2.

The relevance of our results extends beyond the numerical analysis of partial differential equations, to statistical mechanics and probability, particularly the study of the so-called membrane model, a model for a random interface (see the Introduction of \cite{Muller2019} for an overview), which involves a centered Gaussian measure on functions, defined on lattices with uniform spacing, whose covariance matrix is given by the Green's function of the discrete bilaplacian with Dirichlet boundary data. In fact, the analysis pursued here was motivated by recent work by M\"uller and Schweiger \cite{Muller2019}, where estimates for the Green’s function of the discrete bilaplacian on squares and cubes in two and three dimensions were proved. Very recently, Schweiger \cite{Schweiger2019} explored the behavior of the maximum of the solution to the four-dimensional membrane model; for that purpose the estimates from \cite{Muller2019} are not sharp enough, but methods similar to those in the present paper can be employed to relate the Green's function of the discrete bilaplacian with its continuous counterpart and thereby to obtain the required bounds.

The paper is structured as follows. In the remainder of this introductory section we state our main results, outline their proofs, and define the relevant notation that we shall use in the rest of the paper. In Section \ref{s:extension} we recall the definition of an extension operator, which will play a crucial role in our analysis.
In Section \ref{s:estboundaryval} we discuss estimates of boundary values using a discrete counterpart of the fractional-order Sobolev norm $H^{\frac{1}{2}}(\partial\Omega)$. In Section \ref{s:discretetrace} we establish a discrete inverse trace theorem on $\Omega$. In Section \ref{s:summbyparts} we record, for the sake of completeness, some summation-by-parts formulae that we will need, and in Section \ref{s:proofmainthm} we give the proofs of the main theorems. The discussion is completed by two appendices, the first of which concerns various density results that we need and the second contains some remarks on function space interpolation,
which are of relevance in our analysis.

Our results extend to more general fourth-order elliptic elliptic operators with variable coefficients, such as those treated in \cite[Section 2.7]{Jovanovic2014}, with similar proofs. The main difference compared to the analysis here is that in addition to terms appearing in our error bounds one encounters a variety of mixed terms. On can deal with these as in the proof of Theorem 2.68 in \cite{Jovanovic2014}, using the bilinear Bramble--Hilbert lemma. It should also be possible to extend our results to other (higher order) elliptic operators, such as the polyharmonic operator $\Delta^k$ for $k\ge3$, but the study of that question is beyond the scope of this paper.

\subsection{Main results}
We mostly follow the notation of \cite{Jovanovic2014} (see, however, Section \ref{s:Notation} for the precise definitions). Let $n\in\N^+$, $\Omega:=(0,1)^n$, $\Gamma:=\partial\Omega$. For $h\in\R^+$ such that $\frac1h\in\N$, let $\Omega^h:=\Omega\cap(h\Z)^n$, $\Gamma^h:=\Gamma\cap(h\Z)^n$, and
\[\tilde\Omega^h:=[-h,1+h]^n\cap(h\Z)^n\setminus\{-h,1+h\}^n.\]

Consider the elliptic boundary-value problem
	\begin{alignat}{2}\label{eq:bilaplace_cont}
	\begin{aligned}
		\Delta^2u&=f && \text{in }\Omega,\\
		u&=0 && \text{on }\Gamma,\\
		\partial_\nu u&=0&& \text{on }\Gamma,
	\end{aligned}	
	\end{alignat}
where $\partial_\nu$ denotes the derivative in the normal direction ($\nu$ is a unit outward normal vector to $\Gamma$). We approximate the solution of this problem by the finite difference scheme (compare \cite[Section 1.9.4]{Jovanovic2014})
	\begin{alignat}{2}\label{eq:bilaplace_findiff}
        \begin{aligned}
		\Delta_h^2U&=T^{h,2,\ldots,2}f && \text{in }\Omega^h,\\
		U&=0 && \text{on }\Gamma^h,\\
		D^h_{0,\nu} U&=0&& \text{on }\Gamma^h.
        \end{aligned}
	\end{alignat}
Here $U$ is defined on $\tilde\Omega^h$, $D^h_{0,\nu} U(x):=\frac{1}{2h}\big(U(x+h\nu)-U(x-h\nu)\big)$ \footnote{At the singular points (i.e., at the vertices and points on the faces/edges) of $\Gamma^h$ there are up to $n$ possible boundary normal vectors. For \eqref{eq:bilaplace_findiff} we consider all of them. Because $U=0$ on $\Gamma^h$ by assumption, this corresponds to setting $U=0$ at all points of $\tilde\Omega^h\setminus(\Omega^h\cup\Gamma^h)$ that have distance $h$ to a singular point of $\Gamma^h$.}, and $T^{h,2,\ldots,2}f$ is a smoothing operator acting on $f$, defined by convolving $f$ with a B-spline on the scale $h$ (see below for the precise definition).

The finite difference scheme \eqref{eq:bilaplace_findiff} makes sense in any dimension $n$, as the smoothing operator $T^{h,2,\ldots,2}$ maps $f\in H^{s-4}$ into a continuous function whenever $s>\frac52$ (cf. \cite[Theorem 1.69]{Jovanovic2014}).

Our objective is to prove an error bound in the discrete Sobolev norm $\|\cdot\|_{H^2_h(\Omega^h)}$ (which is denoted by $\|\cdot\|_{W^2_2(\Omega^h)}$ in \cite[Section 2.2.4]{Jovanovic2014}).

\begin{theorem}\label{t:mainthm}
Suppose that $\frac{1}{2}\max(5,n)<s\le4$, and let $u\in H^s(\Omega)\cap H^2_0(\Omega)$; then, there
exists a positive constant $C=C(n,s)$, independent of $h$, such that
\begin{equation}\label{eq:mainest}
\|u-U\|_{H^2_h(\Omega^h)}\leq Ch^{s-2}\|u\|_{H^s(\Omega)}.
\end{equation}
\end{theorem}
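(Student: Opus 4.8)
The plan is to estimate the error $u - U$ by inserting a suitable approximant and using the discrete stability (coercivity) of $\Delta_h^2$ in the discrete $H^2_h$ norm. The key point, stressed in the introduction, is that the naive symmetric extension of $u$ across $\Gamma$ no longer preserves $H^s$-regularity once $s > 7/2$ (in $n=2$), so I would fix, once and for all, the extension operator $E$ from Section~\ref{s:extension} mapping $H^s(\Omega) \cap H^2_0(\Omega)$ into $H^s(\mathbb{R}^n)$ in a bounded way, and set $v := Eu$. Let $\bar U$ be the grid function $v|_{\tilde\Omega^h}$, which is well-defined since $s > \tfrac12\max(5,n) \ge n/2$ forces $H^s(\mathbb{R}^n) \hookrightarrow C(\overline{\mathbb{R}^n})$. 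I would then write $u - U = (u - \bar U) + (\bar U - U)$ on $\Omega^h$, bound the first term by a direct interpolation/Bramble--Hilbert estimate for the sampling of an $H^s$ function, and reduce the second term to a discrete elliptic problem with the residual of $\bar U$ and its (small but nonzero) discrete boundary values as data.

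First I would handle the consistency error: $\bar U - U$ solves a finite difference problem of the form $\Delta_h^2(\bar U - U) = \Delta_h^2 \bar U - T^{h,2,\dots,2} f$ in $\Omega^h$, with boundary values $(\bar U - U)|_{\Gamma^h} = \bar U|_{\Gamma^h}$ and $D^h_{0,\nu}(\bar U - U)|_{\Gamma^h} = D^h_{0,\nu}\bar U|_{\Gamma^h}$. The interior residual $\Delta_h^2 \bar U - T^{h,2,\dots,2}(\Delta^2 v)$ is the classical truncation error of the $13$-point (product) stencil against the B-spline-smoothed right-hand side; by the Bramble--Hilbert lemma applied on the relevant patch of cells (as in \cite[Section 2.7]{Jovanovic2014}) it is controlled, after testing against a discrete $H^2_h$ function and summing by parts (Section~\ref{s:summbyparts}), by $C h^{s-2}\|v\|_{H^s} \le C h^{s-2}\|u\|_{H^s(\Omega)}$; the fact that $v = Eu$ only agrees with the true solution $u$ on $\Omega$, not globally, is harmless because $\Delta^2 v = f$ holds in $\Omega$ and the smoothing operator has support shrinking with $h$, so boundary layers contribute only $O(h^{s-2})$. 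For the boundary data I would invoke the discrete trace machinery: by Section~\ref{s:estboundaryval}, the discrete $H^{1/2}(\Gamma)$-type norms of $\bar U|_{\Gamma^h}$ and of $h\,D^h_{0,\nu}\bar U|_{\Gamma^h}$ are bounded by positive powers of $h$ times $\|u\|_{H^s(\Omega)}$ — this is where the careful choice of $E$ pays off, since the true $u$ has zero Dirichlet and Neumann traces and $E$ is built to keep the extended traces small on the scale $h$ — and then the discrete inverse trace theorem of Section~\ref{s:discretetrace} produces a lifting $W$ on $\tilde\Omega^h$ with those boundary values and $\|W\|_{H^2_h(\Omega^h)} \le C h^{s-2}\|u\|_{H^s(\Omega)}$. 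Subtracting $W$, the function $\bar U - U - W$ has zero discrete boundary data, so by the discrete coercivity estimate $\|\Delta_h \cdot\|_{L^2_h}^2 \gtrsim \|\cdot\|_{H^2_h}^2$ on grid functions vanishing to first order on $\Gamma^h$ (valid precisely for $n \le 7$, which is where the dimension restriction enters), one gets $\|\bar U - U - W\|_{H^2_h(\Omega^h)} \le C h^{s-2}\|u\|_{H^s(\Omega)}$, and the triangle inequality closes this piece.

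Finally I would bound $\|u - \bar U\|_{H^2_h(\Omega^h)}$. Since $\bar U = v|_{\tilde\Omega^h}$ with $v \in H^s(\mathbb{R}^n)$ and $v = u$ on $\Omega$, this is the error of nodal interpolation of an $H^s$ function measured in the discrete $H^2_h$ norm; a standard Bramble--Hilbert / averaged-Taylor argument on each mesh cell (legitimate because $s > n/2 + 2$ would be too strong, but the discrete $H^2_h$ norm only involves second-order difference quotients, for which $s > 2 + $ something milder suffices, and indeed $s > \tfrac12\max(5,n)$ is exactly the threshold making the needed Sobolev embeddings of difference quotients hold) yields $\|u - \bar U\|_{H^2_h(\Omega^h)} \le C h^{s-2}\|v\|_{H^s(\mathbb{R}^n)} \le C h^{s-2}\|u\|_{H^s(\Omega)}$, using boundedness of $E$. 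Combining the two estimates gives \eqref{eq:mainest}.

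The main obstacle I anticipate is the boundary-value estimate: one must show that the chosen extension $E$ — which cannot be the reflection once $s > 7/2$ — produces discrete boundary traces $\bar U|_{\Gamma^h}$ and $D^h_{0,\nu}\bar U|_{\Gamma^h}$ that are genuinely $O(h^{\sigma})$ with $\sigma$ large enough that, after the (norm-inflating) discrete inverse trace theorem, the lifting still costs only $O(h^{s-2})$. This requires both a quantitatively good extension operator in Section~\ref{s:extension} and a sharp discrete fractional trace/inverse-trace calculus in Sections~\ref{s:estboundaryval}--\ref{s:discretetrace}; getting the exponents to match with no loss, uniformly for the full range $\tfrac12\max(5,n) < s \le 4$ and all admissible $n$, is the technical heart of the argument. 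A secondary subtlety is the treatment of the singular points of $\Gamma^h$ (edges and corners), where several normal directions coexist and the convention fixing $U = 0$ on $\tilde\Omega^h \setminus (\Omega^h \cup \Gamma^h)$ near those points must be reconciled with the extension $v$; I expect this to be absorbed into the trace estimates at the cost of more bookkeeping but no new ideas.
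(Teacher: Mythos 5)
Your proposal follows the paper's strategy in all essential respects: extend $u$ to $\tilde u$ via the operator of Section~\ref{s:extension}, which preserves $H^s$-regularity and kills both the trace and the normal derivative on the coordinate hyperplanes; estimate $D^h_{0,\nu}\tilde u$ in the discrete $H^{1/2}_h$-seminorm (Section~\ref{s:estboundaryval}); lift those discrete boundary values to a grid function of small discrete $H^2_h$-norm via the discrete inverse trace theorem (Section~\ref{s:discretetrace}); then apply summation by parts, the Bramble--Hilbert lemma, and the discrete Poincar\'e inequality to the resulting homogeneous-boundary problem. Two points in your write-up are, however, mistaken.

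First, the decomposition $u - U = (u - \bar U) + (\bar U - U)$ introduces a spurious term. Since $v = Eu$ coincides with $u$ on $\Omega$, and since the only consistent reading of $\|u-U\|_{H^2_h(\Omega^h)}$ (which requires values of $u$ at the ghost points of $\tilde\Omega^h$) is to substitute the extension $\tilde u = v$ for $u$ at those points, one has $u \equiv \bar U$ as grid functions on $\tilde\Omega^h$; the quantity $u - \bar U$ vanishes identically. There is no ``nodal interpolation error'': $\bar U$ is literally the pointwise restriction to the grid of the very function you compare it to, not a projection or interpolant. The paper therefore works directly with $E := u - U$ (with $u$ read as $\tilde u$ at ghost points), which is exactly your $\bar U - U$; your first piece and the averaged-Taylor argument attached to it are vacuous, and the claimed ``threshold making the needed Sobolev embeddings of difference quotients hold'' does not enter at all.

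Second, you attribute the restriction $n \le 7$ to a failure of discrete coercivity in high dimensions. That is not where the constraint comes from: the discrete Poincar\'e inequalities (Lemmas~\ref{l:Poincare1} and~\ref{l:Poincare2}) hold in every dimension and do not see $n$ in any essential way. The restriction is purely a Sobolev-index constraint: the hypothesis $\tfrac12\max(5,n) < s \le 4$ is nonempty only when $\max(5,n) < 8$, i.e.\ $n \le 7$, and the two lower bounds, $s > n/2$ (so that $u$ is continuous and $\|u-U\|_{H^2_h(\Omega^h)}$ is defined) and $s > 5/2$ (so that $T^{h,2,\ldots,2}f$ is continuous and the scheme makes sense pointwise), are regularity thresholds for well-posedness of the discrete problem, not a stability requirement. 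A smaller inaccuracy: the extension $\tilde u$ of Section~\ref{s:extension} has \emph{exactly} zero trace and normal derivative on the coordinate hyperplanes, not merely ``small on scale $h$''; only the discrete difference quotient $D^h_{0,\nu}\tilde u$ is small (of order $h^{s-2}$ after the $H^{1/2}_h$ estimate), and $\tilde u|_{\Gamma^h}=0$ exactly, so only the Neumann-type datum requires lifting.
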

This improves \cite[Theorem 2.69]{Jovanovic2014} (where the above result was proved for $n=2$ and $\frac{5}{2}< s\le4$ in the more general setting of fourth-order elliptic equations with nonsmooth variable coefficients, but the order of convergence $\mathcal{O}(h^{\min\{s-2,3/2\}}|\log h|^{1-|\rm{sgn}(s-7/2)|})$ established there was optimal only in the case of $\frac{5}{2}<s<\frac{7}{2}$, and is reduced to the suboptimal rate of $\mathcal{O}(h^{\frac{3}{2}})$, instead of the optimal rate of $\mathcal{O}(h^{s-2})$, for $\frac{7}{2}<s\le4$) as well as the main result in \cite{Gavrilyuk1983} (where the theorem was proved for $n=2$ under the additional assumption that the third normal derivative of $u$ vanishes at the boundary).

Our method also yields estimates for other discretizations of the boundary conditions. Consider, for instance, the finite difference scheme
\begin{alignat}{2}\label{eq:bilaplace_findiff2}
	\begin{aligned}
		\Delta_h^2U^*&=T^{h,2,\ldots,2}f && \text{in }\Omega^h,\\
		U^*&=0 && \text{on }\Gamma^h,\\
		D^h_\nu U^*&=0&& \text{on }\Gamma^h.
	\end{aligned}
\end{alignat}
Here again $U^*$ is defined on $\tilde\Omega^h$, and $D^h_\nu U^*(x):=\frac{1}{h}\big(U^*(x+h\nu)-U^*(x)\big)$. The conditions $U^*=0$ and $D^h_\nu U^*=0$ on $\Gamma^h$ are equivalent to $U^*=0$ on $\tilde\Omega^h\setminus\Omega^h$, so that we could equivalently consider the finite difference scheme
\begin{alignat}{2}\label{eq:bilaplace_findiff2'}
	\begin{aligned}
		\Delta_h^2U^*&=T^{h,2,\ldots,2}f && \text{in }\Omega^h,\\
		U^*&=0 && \text{on }\tilde \Omega^h\setminus\Omega^h.
	\end{aligned}
\end{alignat}

For this difference scheme we can show the following error bound.
\begin{theorem}\label{t:mainthm2}
Suppose that $\frac{1}{2}\max(5,n)<s\le3$, and let $u\in H^s(\Omega)\cap H^2_0(\Omega)$; then, there exists a positive constant $C=C(n,s)$, independent of $h$, such that
\begin{equation}\label{eq:mainest2}
\|u-U^*\|_{H^2_h(\Omega^h)}\leq Ch^{s-2}\|u\|_{H^s(\Omega)}.
\end{equation}
\end{theorem}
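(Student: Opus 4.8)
The plan is to reduce the analysis of the scheme \eqref{eq:bilaplace_findiff2'} to the (already understood) situation of Theorem \ref{t:mainthm} by comparing both discrete solutions to a suitable extension $v$ of $u$. Since $u \in H^s(\Omega) \cap H^2_0(\Omega)$ with $\frac12 \max(5,n) < s \le 3$, the range of $s$ is narrower than in Theorem \ref{t:mainthm}, which is precisely what makes the cruder boundary condition $U^* = 0$ on $\tilde\Omega^h\setminus\Omega^h$ admissible: for $s \le 3$ the trace of $\partial_\nu u$ on $\Gamma$ lies in $H^{s-3/2}(\Gamma)$ with $s - \tfrac32 \le \tfrac32$, and more to the point the relevant discrete boundary quantities of the chosen extension $v$ will be controllable with the right powers of $h$. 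First I would take the same carefully chosen extension operator $v = Eu$ from Section \ref{s:extension} that is used in the proof of Theorem \ref{t:mainthm}, so that $\|v\|_{H^s(\R^n)} \lesssim \|u\|_{H^s(\Omega)}$, and restrict $v$ to the grid. Writing $U^* - v|_{\tilde\Omega^h}$, this difference solves a discrete biharmonic problem on $\Omega^h$ with right-hand side $T^{h,2,\dots,2}f - \Delta_h^2 v = T^{h,2,\dots,2}\Delta^2 v - \Delta_h^2 v$ (a consistency error) and with inhomogeneous ``boundary data'' equal to $-v$ on $\tilde\Omega^h\setminus\Omega^h$.

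The argument then splits into two contributions, exactly as for Theorem \ref{t:mainthm}. The consistency term $T^{h,2,\dots,2}\Delta^2 v - \Delta_h^2 v$ is estimated in the appropriate negative discrete norm by $\mathcal{O}(h^{s-2})\|v\|_{H^s}$ using the Bramble--Hilbert lemma together with the smoothing properties of $T^{h,2,\dots,2}$; this is identical to the corresponding step for \eqref{eq:bilaplace_findiff} and requires no new input. The boundary term is where the two theorems differ. For \eqref{eq:bilaplace_findiff2'} one must control the full restriction $v|_{\tilde\Omega^h\setminus\Omega^h}$, not merely the combination $D^h_{0,\nu} v$; concretely, I would show that the relevant discrete boundary functional of $v$ on $\tilde\Omega^h\setminus\Omega^h$ is bounded by $\mathcal{O}(h^{s-2})\|u\|_{H^s(\Omega)}$, measured in the discrete $H^{1/2}$-type norm of Section \ref{s:estboundaryval}. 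Because $v = u = 0$ and $\partial_\nu v = 0$ on $\Gamma$ by the choice of the extension (and because $u \in H^2_0(\Omega)$), the values of $v$ at the grid points at distance $h$ outside $\Omega$ are genuine $\mathcal{O}(h^2)$-order Taylor remainders of a function vanishing to second order on $\Gamma$; combined with $v \in H^s$ and $s \le 3$ this yields the factor $h^{s-2}$ (this is the step that breaks for $s > 3$, since then the boundary values of $v$ would only be $o(h^{3/2})$ and the estimate would saturate — precisely the phenomenon noted for \cite[Theorem 2.69]{Jovanovic2014}). I would then invoke the discrete inverse trace theorem of Section \ref{s:discretetrace} to produce a grid function $w$ on $\tilde\Omega^h$ agreeing with $v$ on $\tilde\Omega^h\setminus\Omega^h$ and satisfying $\|w\|_{H^2_h(\Omega^h)} \lesssim h^{s-2}\|u\|_{H^s(\Omega)}$; subtracting $w$ reduces everything to a discrete problem with homogeneous boundary data, to which the discrete $H^2_h$-coercivity (stability) estimate for $\Delta_h^2$ — obtained from the summation-by-parts formulae of Section \ref{s:summbyparts} — applies directly, giving $\|U^* - v - w\|_{H^2_h(\Omega^h)} \lesssim h^{s-2}\|u\|_{H^s(\Omega)}$.

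Finally I would assemble the pieces: by the triangle inequality $\|u - U^*\|_{H^2_h(\Omega^h)} \le \|u - v\|_{H^2_h(\Omega^h)} + \|v - U^*\|_{H^2_h(\Omega^h)}$, where the first term is $\mathcal{O}(h^{s-2})\|u\|_{H^s(\Omega)}$ by a standard interpolation/Bramble--Hilbert estimate comparing a smooth function with its grid restriction in the discrete $H^2_h$-norm (using $s > \tfrac12\max(5,n)$ so that $H^s \hookrightarrow C(\overline\Omega)$ and point values make sense), and the second term has just been bounded by $\mathcal{O}(h^{s-2})\|u\|_{H^s(\Omega)}$. The main obstacle, and the only place where genuine work beyond Theorem \ref{t:mainthm} is needed, is the sharp estimate of the discrete boundary norm of $v$ on $\tilde\Omega^h\setminus\Omega^h$: one has to exploit simultaneously that $v$ vanishes to second order on $\Gamma$, that $v \in H^s$ with $s$ possibly below the level where pointwise second derivatives exist, and that the discrete $H^{1/2}(\Gamma^h)$-norm is the correct norm in which the loss is only $h^{s-2}$ rather than $h^{3/2}$; I expect this to follow by the same fractional Taylor/Hardy-type arguments and density reductions (Appendix on density results) used for the corresponding boundary estimate in Theorem \ref{t:mainthm}, now applied to $v$ itself rather than to its odd part.
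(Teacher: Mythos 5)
Your proposal is correct and follows essentially the same route as the paper: the paper proves Theorem \ref{t:mainthm2} by repeating the argument of Theorem \ref{t:mainthm}, replacing the estimate for $D^h_{0,\nu}\tilde u$ by the one for $D^h_\nu\tilde u$ (estimate \eqref{eq:estg*hi} of Lemma \ref{l:estboundaryvalues}, valid for $s\le3$), Lemma \ref{l:discretetrace} by Lemma \ref{l:discretetrace2}, and the $\sim$-variants of the summation-by-parts and Poincar\'e lemmas by the simpler $*$-variants adapted to the zero-extension boundary conditions — exactly the ingredients you identify. One small imprecision: the final triangle-inequality split $\|u-U^*\|\le\|u-v\|+\|v-U^*\|$ is superfluous and the appeal to Bramble--Hilbert to bound $\|u-v\|_{H^2_h}$ does not quite apply, since $v=\tilde u$ coincides with $u$ at every grid point of $\overline\Omega$; the paper simply works with $E:=\tilde u-U^*$ throughout and concludes via the discrete Poincar\'e inequality.
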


In Theorems \ref{t:mainthm} and \ref{t:mainthm2} we made the assumption $\frac{1}{2}\max(5,n)<s$. In view of the fact that the problem \eqref{eq:bilaplace_cont} makes sense already for $s>\frac32$, the requirement $\frac{1}{2}\max(5,n)<s$ might seem surprising. The condition $s>\frac n2$ ensures that $u$ is continuous. Otherwise, $\|u-U\|_{H^2_h(\Omega^h)}$ and $\|u-U^*\|_{H^2_h(\Omega^h)}$ would be undefined. The condition $s>\frac52$ implies that $T^{h,2,\ldots,2}f$ is continuous so that its pointwise values are defined and the finite difference schemes \eqref{eq:bilaplace_findiff} and \eqref{eq:bilaplace_findiff2} make sense.
It should be possible to relax the assumption $s>\frac n2$ by replacing $u$ in the expressions $\|u-U\|_{H^2_h(\Omega^h)}$ and $\|u-U^*\|_{H^2_h(\Omega^h)}$ with a suitably mollified version of $u$. Similarly, one can relax the assumption $s>\frac 52$ by replacing $T^{h,2,\ldots,2}$ with a stronger mollification operator. We will not pursue these alternatives here; see however \cite{Schweiger2019} for a version of Theorem \ref{t:mainthm2} with $s<\frac52$.

\subsection{Outline of the proof}
We discuss the proof of Theorem \ref{t:mainthm} only; the proof of Theorem \ref{t:mainthm2} is very similar. We proceed similarly to the proof of \cite[Theorem 2.69]{Jovanovic2014}. In fact, when $s<\frac72$ we could directly use the argument in \cite{Jovanovic2014} with only minor notational changes. Let us review that argument here briefly. We begin by extending $u$ symmetrically across $\Gamma$ to a $H^s$-function $\hat u$ on $(-1,2)^n$ such that $\|\hat u\|_{H^s((-1,2)^n)}\leq C\|u\|_{H^s(\Omega)}$. Here and henceforth $C$ signifies a generic positive constant, which may depend on the Sobolev index $s$
and on the number of space dimensions $n$, but is independent of the discretization parameter $h$.
Let $E:=\hat u-U$. Then, $E$ satisfies %
\begin{alignat*}{2}
		E&=0  \qquad &&\text{on }\Gamma^h,\\
		D^h_{0,\nu} E&=0\qquad&& \text{on }\Gamma^h,
\end{alignat*}
and we calculate (compare \cite[Equation (2.209)]{Jovanovic2014})
\[\Delta_h^2E=\Delta_h^2\hat u-\Delta_h^2U=\Delta_h^2\hat u-T^{h,2,\ldots,2}f=\Delta_h^2\hat u-T^{h,2,\ldots,2}\Delta^2\hat u.\]
Using summations by parts we obtain
\[\|\nabla_h^2E\|_{L^2_h(\Omega^h)}\leq\|\Delta_h^2\hat u-T^{h,2,\ldots,2}\Delta^2\hat u\|_{H^{-2}_h(\Omega^h)},\]
where $\nabla_h^2$ is the discrete Hessian. Now one can use the Bramble--Hilbert lemma (cf. \cite{Jovanovic2014}) to deduce that the right-hand side is bounded by $Ch^{s-2}\|u\|_{H^s(\Omega)}$, which directly implies \eqref{eq:mainest}.

When $s\ge\frac72$ one can no longer extend $u$ symmetrically across the boundary while preserving its Sobolev regularity. This means that we cannot make $D^h_{0,\nu} u$ equal to $0$ on $\Gamma$, and therefore the above argument based on summation by parts no longer works.

Our alternative approach is as follows. Although we cannot ensure that the boundary values of $D^h_{0,\nu} E$ are exactly zero, we will show that they can nevertheless be made small in an appropriate norm. To this end, we will first show (in Section \ref{s:extension}) that we can take a slightly different extension $\tilde u$ with $\|\tilde u\|_{H^s((-1,2)^n)}\le C\|u\|_{H^s(\Omega)}$, $\frac{7}{2} \leq s \leq 4$, such that $\tilde u$ and its derivatives vanish on the hyperplanes supporting the faces of $\Gamma$. This will allow us to estimate the boundary values in an optimal space. In fact, in Section \ref{s:estboundaryval} we prove that
\[[D^h_{0,\nu} \tilde{u}]_{H^{\frac12}_h(\Gamma^h)}\le Ch^{s-2}\|\tilde{u}\|_{H^s(\Omega)}\]
(see that section also for a precise definition of the $H^{\frac12}_h(\Gamma^h)$-seminorm appearing on the left-hand side
of this inequality). Then, in Section \ref{s:discretetrace}, we prove that for any function $g$ on the boundary there is a lattice function $w$ such that
\[\begin{array}{rl}
		w=0 & \text{on }\Gamma^h,\\
		D^h_{0,\nu} w=g& \text{on }\Gamma^h,
	\end{array}\]
and
\[ \|\nabla_h^2 w\|_{L^2_h(\Omega^h)}\le C[g]_{H^{\frac12}_h(\Gamma^h)}.\]
We shall construct $w$ by giving an explicit extension using the Fourier series representation of the boundary values, which we then carefully cut off to comply with the boundary conditions.

Let now $\hat E$ be such a function $w$ corresponding to $g=D^h_{0,\nu} \tilde{u}$.
We can apply the argument above to $E-\hat E$ (which has zero boundary values) and find that
\[\|\nabla_h^2(E-\hat E)\|_{L^2(\Omega^h)}\le C\left(h^{s-2}\|u\|_{H^s(\Omega)}+\|\Delta_h^2\hat E\|_{H^{-2}_h(\Omega^h)}\right).\]
Thus, by observing that
\[\|\Delta_h^2\hat E\|_{H^{-2}_h(\Omega^h)}\le C\|\nabla_h^2\hat E\|_{L^2(\Omega^h)}\le C[D^h_{0,\nu} \tilde{u}]_{H^{\frac12}_h(\Gamma^h)}\le Ch^{s-2}\|u\|_{H^s(\Omega)},\]
we directly deduce \eqref{eq:mainest}. We shall present the details of the argument in Section \ref{s:proofmainthm}. In that section we shall also make several remarks that concern possible modifications and generalizations of the results.

\subsection{Notation and preliminaries}\label{s:Notation}
Our notation is based on that in \cite{Jovanovic2014}, however we made some changes that we will review in the following.

$C$ denotes a constant that may change from line to line and may be dependent on the Sobolev index, $s$, and the number of space dimensions, $n$, but is always independent of $h$. Similarly $C(h)$ denotes a constant that may change from line to line and may depend on $h$ as well.

For $s\ge0$ and $\Xi\subset\R^n$ open with Lipschitz boundary we define the Sobolev space $H^s(\Xi)$ as the space of restrictions of $H^s(\R^n)$-functions to $\Xi$. By $H^s_0(\Xi)$ we denote the closure of the set of all $C_c^\infty(\Xi)$-functions in the $\|\cdot\|_{H^s(\Xi)}$-norm.

Assume that $\Xi:=I_1\times\cdots\times I_n$, where $I_j\subset\R$ are (possibly unbounded) open intervals. This assumption ensures that we have $\mathcal{H}^{n-1}$-almost everywhere on $\partial\Xi$ an axiparallel normal vector. Given a $k\in\N_0$ with $k+\frac12< s$, we denote by $H^s_{(k)}(\Xi)$ the space of all functions $u\in H^s(\Xi)$ such that the traces of $\partial_\nu^iu$ for $0\le i\le k$ vanish on each face of $\partial\Xi$. We extend this definition to $k> s-\frac12$, provided $s\notin\N+\frac12$, by setting $H^s_{(k)}(\Xi)=H^s_{(\lfloor s-1/2\rfloor)}(\Xi)$.

There are several other equivalent definitions of $H^s_{(k)}(\Xi)$.
Let $C_c^\infty(\overline{\Xi})$ denote the space of functions on $\Xi$, which are in $C^\infty(\Xi)$, for which all derivatives admit continuous extensions to $\overline \Xi$, and which are supported in $K\cap \Xi$ for some $K\subset\R^n$ compact. In other words, $C_c^\infty(\overline{\Xi})$ denotes the set of restrictions of $C_c^\infty(\R^n)$-functions to $\Xi$, where the equivalence follows from Whitney's extension theorem \cite{Whitney1934}. Then, $H^s_{(k)}(\Xi)$ is also the closure in the $H^s(\Xi)$-norm of the set of all functions in $C_c^\infty(\overline{\Xi})$ whose derivatives up to order $k$ vanish on $\partial\Xi$
Furthermore, $H^s_{(k)}(\Xi)$ is equal to $H^s(\Xi)\cap H^{k+1}_0(\Xi)$ if $s\ge k+1$, and equal to $H^s_0(\Xi)$ if $s\le k+1$. In particular, the space $H^s(\Omega)\cap H^2_0(\Omega)$ from the main theorems can now be written as $H^s_{(1)}(\Omega)$.

The fact that these definitions are equivalent should not be surprising. Nonetheless we could not locate a reference for this precise equivalence result, and so we present its proof in Appendix \ref{a:density}.


Given a $j\in\N$, we let $\theta_j$ be the standard univariate centered B-spline of degree $j-1$, defined, for example, as the indicator function of the closed interval $\left[-\frac12,\frac12\right]$ convolved with itself $j-1$ times (cf. \cite[Section 1.9.4]{Jovanovic2014}).  Using this, we define the smoothing operator $T^{h,j}_i$ for $1\le i\le n$ as
\[T^{h,j}_if:=\frac{1}{h} f*_i\theta_j\left(\frac{\cdot}{h}\right),\]
where $*_i$ means convolution in the variable $x_i$. This is a well-defined operator on distributions on $\R^n$. Furthermore, we set
\[T^{h,j,\ldots, j}f:=T^{h,j}_1\circ\cdots \circ T^{h,j}_nf.\]
Each $\theta_j$ is in $H^t(\R)$ for any $t<j-\frac12$. Using this, one can verify (cf. \cite[Section 1.9.4]{Jovanovic2014}) that $T^{h,j,\ldots, j}$ is a bounded linear operator from $H^t(\R^n)$ to $C_b(\R^n)$ whenever $t>-j+\frac12$.


Given a unit vector $a\in\R^n$, we define the difference quotients $D^h_av(x):=\frac1h(v(x+ha)-v(x))$, $D^h_{-a}v(x):=\frac1h(v(x)-v(x-ha))$, $D^h_{0,a}v(x):=\frac1{2h}(v(x+ha)-v(x-ha))$. When $a$ is a standard unit vector $e_i$, we write $D^h_i$ instead of $D^h_{e_i}$ (and similarly for $D^h_{-i}$ and $D^h_{0,i}$).

The discrete gradient is the vector $\nabla_hv(x):=(D^h_iv(x))_{i=1}^n$, the discrete Hessian is the tuple $\nabla^2_hv(x):=(D^h_iD^h_{-j}v(x))_{i,j=1}^n$, the discrete Laplacian is $\Delta_hv(x):=\sum_{i=1}^nD^h_iD^h_{-i}v(x)$, and the discrete bilaplacian is $\Delta_h^2:=\Delta_h\circ\Delta_h$.

For any $A\subset(h\Z)^n$ and $v,w\colon A\to\R$ we define $(v,w)_{L^2_h(A)}:=\sum_{x\in A}h^nv(x)w(x)$ and $\|v\|^2_{L^2_h(A)}:=(v,v)_{L^2_h(A)}$. We also define the discrete Sobolev norm $\|v\|_{H^2_h(\Omega^h)}$ of $v\colon\tilde\Omega^h\to\R$ as the sum of the $L^2_h$-norms of $v$, $\nabla_hv$ and $\nabla_h^2v$, wherever they are defined; more precisely,
\[\|v\|^2_{H^2_h}:=\sum_{x\in\tilde\Omega_h}h^nv(x)^2+ \sum_{i=1}^n\sum_{\substack{x\in\tilde\Omega_h:\\x+he_i\in\tilde\Omega_h}} h^n(D^h_iv(x))^2+\sum_{i,j=1}^n\sum_{\substack{x\in\tilde\Omega_h:\\x+he_i,x-he_j,x+he_i-he_j\in\tilde\Omega_h}} h^n(D^h_iD^h_{-j}v(x))^2.\]

Note that we have the crucial property
\[D^h_iD^h_{-i}T^{h,j-2}_if=T^{h,j}_i\partial_i^2f\]
for any $i$ and any $j\ge2$.

\section{Construction of a good extension}\label{s:extension}

At a certain point in our argument it will be necessary to localize the functions concerned in order to deal with the $2^n$ corners of $\Omega=(0,1)^n$ separately. Actually, it is most convenient to do so right from the start. Thus we shall use a partition of unity, which allows us to split $u$ into $2^n$ parts localized near the corners. These parts can all be dealt with in a similar way, so we focus on one of them and assume that $u$ is supported in $\left[0,\frac23\right)^n$.

Recall that $H^s_{(1)}(\Omega)$ denotes the space of functions $u \in H^s(\Omega)$ for which $u$ and the normal derivative vanish on $\partial \Omega$.
\begin{lemma}\label{l:extension} Let $\frac32<s\le 4$, let $u \in H^s_{(1)}(\Omega) $ be supported in $\left[0,\frac23\right)^n$. Then, there exists a function $\tilde u \in H^s_0( (-1,1)^n)$ such that $\|\tilde{u}\|_{H^s((-1,1)^n)}\le C\|u\|_{H^s(\Omega)}$, $\tilde u_{ |\Omega} = u$, and $\tilde u=0$, $\nabla \tilde{u} = 0$ on the $(n-1)$-dimensional hyperplanes $x_i = 0$  for $i \in \{1,\ldots, n\}$ in the sense of traces.
\end{lemma}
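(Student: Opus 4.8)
The plan is to build $\tilde u$ one coordinate direction at a time, extending from the slab $\{x_i>0\}$ to all of $\R$ in the $x_i$-variable by a reflection formula that is chosen to kill both the trace and the normal derivative on $\{x_i=0\}$. For a single variable, the natural candidate is a "higher-order" reflection of the form $\tilde u(x) = u(x)$ for $x_i>0$ and $\tilde u(x) = \sum_{k} a_k\, u(-\lambda_k x_i, x')$ for $x_i<0$, where the coefficients $a_k$ and dilation factors $\lambda_k>0$ are selected so that the function and, in our case, its first $\lceil s-1/2\rceil$ normal derivatives match across the interface; since $u\in H^s_{(1)}$ already has vanishing zeroth and first normal traces, the matching conditions that need $a_k,\lambda_k$ are automatically the homogeneous ones, so one gets not merely an $H^s$-extension but one whose zeroth and first traces on $\{x_i=0\}$ vanish from \emph{both} sides. (This is the classical Babič–Hestenes–Lichtenstein–Whitney-type reflection; one may also simply take the Stein total extension operator and post-compose with the projection onto $H^s_{(1)}$, but the explicit reflection makes the trace-killing transparent.) Iterating over $i=1,\dots,n$ produces a function on $\R^n$ with the claimed vanishing traces on all the hyperplanes $x_i=0$.

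The second ingredient is the support constraint. Since $u$ is supported in $[0,\tfrac23)^n$, after reflecting in the $x_i$-direction with dilation factors $\lambda_k$ the reflected part lives in $x_i\in(-\tfrac23\max_k\lambda_k,0)$, so by choosing the $\lambda_k$ all $\le 1$ (which is compatible with the Whitney-type system — one can always arrange finitely many reflection nodes with $0<\lambda_k\le 1$) the extension stays supported in $(-\tfrac23,\tfrac23)^n\subset(-1,1)^n$. This handles both that $\tilde u\in H^s((-1,1)^n)$ with the norm bound (the reflection operator is bounded $H^s(\{x_i>0\})\to H^s(\R)$ with constant depending only on $s$ and the node data) and, together with the vanishing traces, that $\tilde u\in H^s_0((-1,1)^n)$: indeed $\tilde u$ vanishes near the faces $x_i=1$ (by support) and has vanishing $H^s_{(1)}$-traces near the faces $x_i=-1$\,— wait, more carefully, near $x_i=0$; away from $x_i=0$ it is compactly supported inside the cube, so cutting off and mollifying gives approximation by $C_c^\infty((-1,1)^n)$ functions, i.e.\ membership in $H^s_0$. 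Finally $\tilde u|_\Omega=u$ is immediate since the reflection leaves the values on $\{x_i>0\}$ untouched.

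The main obstacle I anticipate is purely bookkeeping: verifying that the reflection genuinely produces an $H^s$ function across the interface when $s\le 4$ but $s$ need not be an integer, so the matching must be read in the sense of fractional traces. The clean way is to invoke the characterization from Section \ref{s:Notation} (and Appendix \ref{a:density}) that $H^s_{(k)}$ of a box is the $H^s$-closure of smooth functions whose derivatives up to order $k$ vanish on the boundary; reflect such smooth functions (where "derivatives up to $\lceil s-1/2\rceil$ match" is a finite, checkable system), get a bounded map on that dense subspace, and pass to the closure. One should also check the compatibility at the \emph{edges and corners} of the box — reflecting successively in $x_1,\dots,x_n$ is consistent there because each reflection preserves the vanishing traces already created in the earlier directions (the reflection in $x_i$ commutes with evaluating a trace on $x_j=0$ for $j\ne i$, up to the harmless dilation in $x_i$), so no new conditions are generated. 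Modulo these verifications, the construction and the estimate $\|\tilde u\|_{H^s((-1,1)^n)}\le C\|u\|_{H^s(\Omega)}$ follow.
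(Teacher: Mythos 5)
Your construction is essentially the paper's: a tensorized Hestenes/Babič-type reflection, with coefficients chosen so that the second and third normal derivatives match across each interface $\{x_i=0\}$ (the zeroth and first matching conditions holding automatically since $u\in H^s_{(1)}$), iterated across the $n$ coordinate directions. Two points, however, need repair.

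First, the constraint you place on the dilation factors is reversed. With your convention $\tilde u(x)=\sum_k a_k\, u(-\lambda_k x_i, x')$ for $x_i<0$, the piece $u(-\lambda_k x_i,x')$ is supported where $-\lambda_k x_i\in[0,\tfrac23)$, i.e.\ $x_i\in(-\tfrac{2}{3\lambda_k},0]$; so the reflected part lives in $(-\tfrac{2}{3\min_k\lambda_k},0]$, not $(-\tfrac23\max_k\lambda_k,0)$, and you need $\lambda_k\ge 1$ (the paper takes $\lambda\in\{1,2\}$), not $\lambda_k\le 1$. Choosing $\lambda_k<1$ would push the reflected support farther from the interface, potentially outside $(-1,1)^n$.

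Second, the step that actually carries the weight for non-integer $s$ is left vague. You either cite boundedness of the reflection $H^s(\{x_i>0\})\to H^s(\R)$ as a black box, or propose to "check the matching conditions on a dense subspace and pass to the closure." But proving $H^s$-boundedness on the smooth dense subspace for fractional $s$ is precisely the nontrivial content — density does not finesse it, since you still need to know that a piecewise smooth function whose derivatives up to order $3$ agree across the interface lies in $H^s$ for $3<s\le4$ with the right bound. The paper handles this by proving the estimate only at the integer endpoints $s=4$ (where matching of $\partial_i^k\tilde u$ for $k\le 3$ gives $H^4$ regularity by elementary means) and $s=1$ (where $H^1_{(1)}=H^1_0$), and then invoking the interpolation identity $\bigl[H^4_{(1)}(\Omega),H^1_{(1)}(\Omega)\bigr]_{(4-s)/3}=H^s_{(1)}(\Omega)$ from Lemma~\ref{l:interpolation}. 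That identification is the substantive input your sketch bypasses; it is itself nontrivial on a cube (it is proved in the appendix via the tensor-product structure, Lemma~\ref{l:interpolation1D}, and the intersection-interpolation theorem of Lions--Magenes) and is reused elsewhere in the paper. Your route can be made rigorous by citing the classical result, but the interpolation argument is what makes the paper's proof self-contained and consistent with the rest of its machinery.
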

Because $\tilde u \in H^s_0( (-1,1)^n)$, we can extend $\tilde u$ outside $(-1,1)^n$ by zero to a function in $H^s(\R^n)$ (that we continue to call $\tilde u$).

The construction of the extension is classical; see, e.g., \cite[Section 11.5]{Lions1972}. Nonetheless we give some details, in particular because a similar construction will be used in Section \ref{s:discretetrace}.
\begin{proof}

There exist $\lambda_{-1},\lambda_{-2} \in \R$ such that
\[
   \lambda_{-1}  +\lambda_{-2} 2^k= (-1)^k \quad \mbox{for $k \in \{2,3\}$}.
\]
We also let $\lambda_1=1$. Then we define the extension $\tilde u$ of $u$ by
\[
\tilde u(x_1,\ldots,x_n)=\sum_{\substack{\varepsilon_1=1 \text{ if }x_1\ge0\\\varepsilon_1\in\{-1,-2\}\text{ if }x_1<0}}\ldots\sum_{\substack{\varepsilon_n=1 \text{ if }x_n\ge0\\\varepsilon_n\in\{-1,-2\}\text{ if }x_n<0}}\lambda_{\varepsilon_1}\cdot\ldots\cdot\lambda_{\varepsilon_n}u(\varepsilon_1x_1,\ldots,\varepsilon_nx_n).\]
For example, for $n=2$ we have
\[\tilde u(x_1,x_2)=
\begin{cases} u(x_1,x_2)&\quad\mbox{for $x_1\ge0,x_2\ge0$},\\
\lambda_{-1}u(-x_1,x_2)+\lambda_{-2}u(-2x_1,x_2)&\quad\mbox{for $x_1<0,x_2\ge0$},\\
\lambda_{-1}u(x_1,-x_2)+\lambda_{-2}u(x_1,-2x_2)&\quad\mbox{for $x_1\ge0,x_2<0$},\\
(\lambda_{-1})^2u(-x_1,-x_2)+\lambda_{-1}\lambda_{-2}u(-x_1,-2x_2)\\
\quad+\lambda_{-1}\lambda_{-2}u(-2x_1,-x_2)+(\lambda_{-2})^2u(-2x_1,-2x_2)&\quad\mbox{for $x_1<0,x_2<0$}.
\end{cases}\]
This extension is constructed by applying an extension operator similar to the one in \cite[Section 2.2]{Lions1972} once across every hyperplane (or in other words by applying a tensorized version of that extension operator).

One easily checks that both $\tilde{u} =0$ and $\nabla \tilde{u} = 0$ on the face $x_i = 0$  for $i \in \{1,\ldots, n\}$. In addition, the support of $\tilde u$ is contained in $\left(-\frac23,\frac23\right)^n\subset(-1,1)^n$.

It remains to show that $\tilde u\in H^s((-1,1)^n)$ and
\begin{equation}\label{eq:esttildeuWs2}
\|\tilde{u}\|_{H^s((-1,1)^n)}\le C\|u\|_{H^s(\Omega)}.\end{equation}
For this we use interpolation.
If $s=4$, and $u\in H^4_{(1)}(\Omega)$ observe that by the construction of $\tilde u$ for $k \in \{0,1,2,3\}$ the traces of $\partial^k_i \tilde u$ from the two sides of $\{x_i=0\}$ agree. This implies that $\tilde u\in H^4((-1,1)^n)$ and $\|\tilde{u}\|_{H^4((-1,1)^n)}\le C\|u\|_{H^4(\Omega)}$. If $s=1$ and $u\in H^1_{(1)}(\Omega)=H^1_0(\Omega)$ we can use the same argument to obtain \eqref{eq:esttildeuWs2} once again.
Now, by Lemma \ref{l:interpolation} from the Appendix, for any $\frac32<s \leq 4$ the interpolation space $\left[H^4_{(1)}(\Omega),H^1_{(1)}(\Omega)\right]_{\frac{1}{3}(4-s)}$ is equal to $H^s_{(1)}(\Omega)$. Thus \eqref{eq:esttildeuWs2} follows by standard function space interpolation theory.
\end{proof}

\section{Estimate of the boundary values}\label{s:estboundaryval}

We want to estimate the boundary values in the fractional discrete Sobolev space $H^{\frac12}_h$. In order to do so, we first define the appropriate (semi-)norms.

Let $S$ be a subset of $\R^n$ that is contained in an axiparallel $(n-1)$-dimensional affine subspace of $\R^n$ such that $S\cap(h\Z)^n\neq\varnothing$, and let $w\colon S\cap (h\Z)^n\to\R$. We then define
\[[w]^2_{H^{\frac12}_h(S\cap (h\Z)^n)}:=\sum_{\substack{x,y\in S\cap (h\Z)^n\\ x\neq y}}\frac{|w(x) - w(y)|^2}{|x-y|^n}h^{2n-2}\]
and
\[\|w\|^2_{H^{\frac12}_h(S\cap (h\Z)^n)}:=\|w\|^2_{L^2_h(S\cap (h\Z)^n)}+[w]^2_{H^{\frac12}_h(S\cap (h\Z)^n)}.\]
For the discrete trace theorem in the following section we will need to use the extension by zero of $D^h_{0,\nu} \tilde u$ and $D^h_\nu\tilde u$.  Therefore we directly estimate the $H^{\frac12}_h$-seminorm of that extension in the following lemma.\footnote{Alternatively one could define a discrete analogue of the $H^{\frac12}_{00}$-norm from \cite[Section 11.5]{Lions1972}; that however leads to unnecessary technicalities in the present context.} At the first glance it might seem problematic that we are extending $D^h_{0,\nu} \tilde u$ by zero, because for $s\ge\frac52$ this extension does not preserve the $H^s$-regularity of $\tilde u$. However it turns out that it is possible to estimate $[D^h_{0,\nu} \tilde u]_{H^{\frac12}_h}$ by expressions that involve several derivatives in the direction $e_n$, but at most one derivative in the directions $e_i$ for $1\le i\le n-1$, so our assumptions on the boundary values are sufficient.

\begin{lemma}\label{l:estboundaryvalues}
Let $s>\frac{1}{2}\max(3,n)$ and let $\tilde{u}$ be as in Lemma \ref{l:extension}. For $i\in\{1,\ldots, n\}$ let $g_{h,i}$ and $g^*_{h,i}$ be the extension by zero of $D^h_{0,i} \tilde u$ and $D^h_{-i}\tilde u$ in the hyperplane $(h\Z)^{i-1}\times\{0\}\times(h\Z)^{n-i}$, respectively, i.e., $g_{h,i}\colon(h\Z)^{i-1}\times\{0\}\times(h\Z)^{n-i}\to\R$ and $g^*_{h,i}\colon(h\Z)^{i-1}\times\{0\}\times(h\Z)^{n-i}\to\R$ satisfy
\begin{align*}
	g_{h,i}(x)&=\begin{cases}D^h_{0,i} \tilde u(x)&\quad\mbox{when ${x\in(0,\infty)^{i-1}\times\{0\}\times[0,\infty)^{n-i}}$},\\0&\quad \text{otherwise},\end{cases}\\
	g^*_{h,i}(0)&=\begin{cases}D^h_{-i} \tilde u(x)&\quad\mbox{when ${x\in(0,\infty)^{i-1}\times\{0\}\times[0,\infty)^{n-i}}$},\\0&\quad\text{otherwise}.\end{cases}
\end{align*}
We have that, if $s\le4$, then
\begin{equation}\label{eq:estghi}
\|g_{h,i}\|_{H^{\frac12}_h((h\Z)^{i-1}\times\{0\}\times(h\Z)^{n-i})}\le Ch^{s-2}\|u\|_{H^s(\Omega)},
\end{equation}
and, if $s\le3$, then
\begin{equation}\label{eq:estg*hi}
\|g^*_{h,i}\|_{H^{\frac12}_h((h\Z)^{i-1}\times\{0\}\times(h\Z)^{n-i})}\le Ch^{s-2}\|u\|_{H^s(\Omega)}.
\end{equation}

\end{lemma}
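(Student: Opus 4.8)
The plan is to reduce the estimate of the discrete $H^{1/2}_h$-seminorm of $g_{h,i}$ (and $g^*_{h,i}$) to a continuous trace estimate. By symmetry of the roles of the coordinate directions we may fix $i=n$ and write $\nu=e_n$; we drop the subscript $i$. First I would observe that the difference quotient $D^h_{0,n}\tilde u$, as a function of $(x_1,\dots,x_{n-1})$ on the hyperplane $\{x_n=0\}$, can be written as the trace on $\{x_n=0\}$ of the function $v_h(x):=D^h_{0,n}\tilde u(x) = \tfrac1{2h}(\tilde u(x+he_n)-\tilde u(x-he_n))$. The key structural point, already flagged in the remark before the lemma, is that $v_h$ involves only a \emph{single} difference in the $x_n$-direction, so that $\partial_j v_h = D^h_{0,n}\partial_j\tilde u$ for $j\le n-1$; moreover, because $\tilde u$ and $\nabla\tilde u$ vanish on $\{x_n=0\}$, we have $v_h = 0$ on $\{x_n=0\}$ when $\dots$ — no, that is false; rather, $v_h$ at $x_n=0$ equals $\tfrac1{2h}(\tilde u(\cdot,h)-\tilde u(\cdot,-h))$, and since $\tilde u$ and $\partial_n\tilde u$ vanish at $x_n=0$ a Taylor/Sobolev argument gains us extra powers of $h$. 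So the first step is to prove a continuous estimate: if $w\in H^{s-1}(\R^n)$ has $\partial_n$-decay encoded by vanishing traces, then for the lattice function $x'\mapsto v_h(x',0)$ on $(h\Z)^{n-1}$,
\[
[v_h(\cdot,0)]^2_{H^{1/2}_h((h\Z)^{n-1})} \le C\, \|v_h(\cdot,0)\|_{H^{1/2}((h\Z)^{n-1})-\text{type}}^2 \le C\,\|v_h\|_{H^1(\R^n)}^2,
\]
using a standard comparison between the discrete $H^{1/2}_h$-seminorm on a lattice and the continuous $H^{1/2}$-norm, together with the continuous trace theorem $H^1(\R^n)\to H^{1/2}(\R^{n-1})$. (The comparison of discrete and continuous fractional seminorms on $h\Z^{n-1}$ is where the hypothesis $s>\tfrac12\max(3,n)$ enters, guaranteeing enough pointwise regularity for the lattice restriction to be meaningful and controlled; I would cite or reprove a Riemann-sum-type estimate for the double sum defining $[\,\cdot\,]_{H^{1/2}_h}$.)

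The second step is to bound $\|v_h\|_{H^1(\R^n)}$ by $C h^{s-2}\|u\|_{H^s(\Omega)}$. Here I would exploit that $v_h = D^h_{0,n}\tilde u$ is a (scaled) finite difference of $\tilde u$ in the single direction $e_n$, and that $\tilde u\in H^s_0((-1,1)^n)$ with $\|\tilde u\|_{H^s}\le C\|u\|_{H^s(\Omega)}$ by Lemma~\ref{l:extension}. For a function in $H^s$ with $2\le s\le 4$, a first difference quotient in one direction maps $H^s\to H^{s-1}$ with norm $O(1)$, but we need the stronger gain $O(h^{s-2})$ in the $H^1$-norm; this comes from the vanishing traces. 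Concretely, writing $D^h_{0,n}\tilde u(x) = \tfrac1{2h}\int_{-h}^{h}\partial_n\tilde u(x+te_n)\,\mathrm dt$ and using that $\partial_n\tilde u$ vanishes on $\{x_n=0\}$ (so near that hyperplane $\partial_n\tilde u$ is itself small), one gets, away from $\{x_n=0\}$, a bound by $\partial_n\tilde u$ shifted, and near $\{x_n=0\}$ an extra factor from the Hardy-type inequality $\|\partial_n\tilde u(\cdot,x_n)/x_n\|\lesssim \|\partial_n^2\tilde u\|$. Interpolating between $s=2$ (trivial $O(1)$ bound, no gain needed since $h^{s-2}=1$) and $s=4$ (where two vanishing traces of $\tilde u$, hence one vanishing trace of $\partial_n\tilde u$, buy $h^2$ via Hardy plus one more order from the smoothness of the difference quotient) yields the claimed $h^{s-2}$; the interpolation is legitimate because the relevant spaces are exactly the $H^s_{(1)}$ scale handled in Lemma~\ref{l:interpolation}. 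Adding the (easier) $L^2_h$-part of the $H^{1/2}_h$-norm — which is controlled the same way, indeed more easily — completes the bound \eqref{eq:estghi}.

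For $g^*_{h,i}$ one argues identically with $D^h_{-n}\tilde u = \tfrac1h(\tilde u - \tilde u(\cdot-he_n))$ in place of $D^h_{0,n}\tilde u$; the only difference is that the one-sided difference quotient is less accurate, so the interpolation runs between $s=2$ and $s=3$ (one vanishing trace of $\tilde u$ giving, via Hardy, the factor $h$ at $s=3$), which is exactly why \eqref{eq:estg*hi} is restricted to $s\le 3$. The main obstacle I anticipate is making the Hardy-inequality-plus-difference-quotient estimate near the hyperplane $\{x_n=0\}$ fully rigorous uniformly in $h$ while keeping track of the zero-extension in the tangential variables: one must check that extending $D^h_{0,n}\tilde u$ by zero outside the quadrant $(0,\infty)^{n-1}\times\{0\}$ does not create a jump that destroys the $H^{1/2}_h$-bound, and this is where the \emph{tangential} vanishing of $\tilde u$ (from Lemma~\ref{l:extension}, $\tilde u=0,\ \nabla\tilde u=0$ on every hyperplane $x_j=0$) is used: it forces $g_{h,i}$ to vanish — together with enough of a decay rate — as $x_j\downarrow 0$, so the zero extension is $H^{1/2}_h$-admissible with constant bounded by $C h^{s-2}\|u\|_{H^s}$. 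I would isolate this as a short lemma about zero-extension of lattice functions that vanish appropriately at a coordinate hyperplane, proved by the same discrete-versus-continuous Hardy estimate.
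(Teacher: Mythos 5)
Your overall strategy---reduce the discrete $H^{1/2}_h$ estimate to a continuous trace estimate, exploit the vanishing traces of $\tilde u$ and $\nabla\tilde u$ to gain powers of $h$, and deal carefully with the tangential zero extension---aligns with the paper's. But the specific two-step chain has a false link: with $v_h:=D^h_{0,n}\tilde u$ on $\R^n$, you propose
\[
[v_h(\cdot,0)]_{H^{1/2}_h((h\Z)^{n-1})}\le C\,\|v_h\|_{H^1(\R^n)}\le Ch^{s-2}\|u\|_{H^s(\Omega)},
\]
and the second inequality cannot hold for $s>2$. Away from $\{x_n=0\}$ the difference quotient $v_h$ approximates $\partial_n\tilde u$, so $\|v_h\|_{H^1(\R^n)}$ is comparable to $\|\tilde u\|_{H^2}$ and gains no power of $h$---exactly as your own aside (``away from $\{x_n=0\}$, a bound by $\partial_n\tilde u$ shifted'') already shows. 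The vanishing traces at $x_n=0$ improve the \emph{trace} of $v_h$, not its bulk $H^1$ norm, so the $h^{s-2}$ gain cannot be routed through $\|v_h\|_{H^1(\R^n)}$. The paper instead estimates the continuous trace norm directly (Lemma~\ref{l:estboundaryvalues2}) via the identity $v(\cdot,h)-v(\cdot,-h)=h^2\int_{-h}^{h}m(s/h)\,\partial_n^3v(\cdot,s)\ud s$, read as a Bochner integral in $H^{1/2}(\R^{n-1})$; the $h^3$ gain at $s=4$ comes from the prefactor and the short interval of integration, while the $H^1\to H^{1/2}$ trace theorem is applied to the zero-extension of $\partial_n^3v$, which has $O(1)$ norm. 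Your Hardy-inequality idea is in the same spirit, but it must be applied to the trace, not the bulk norm.

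Two further gaps. The ``standard comparison between the discrete $H^{1/2}_h$-seminorm on a lattice and the continuous $H^{1/2}$-norm'' is less routine than you suggest: one needs a local oscillation bound $|g_n(x')-g_n(\hat x')|\le Ch^{s-1-n/2}\|\tilde u\|_{H^s(\text{local})}$ for nearby lattice and continuous points before lattice sums can be compared with integrals; this is where the paper invokes a separate Bramble--Hilbert-type lemma (Lemma~\ref{l:estboundaryvalues3}), which also is where $s>\tfrac n2$ enters. And the tangential zero-extension is not a short afterthought: it is the reason the paper has to introduce the spaces $G^s$ with both tangential and normal boundary conditions and prove the interpolation identity (Lemma~\ref{l:interpolation2}); in the $s=4$ endpoint of Lemma~\ref{l:estboundaryvalues2}, the tangential vanishing of $\partial_n^3v$ is precisely what makes its zero extension lie in $H^1(\R^n)$.
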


We can assume that $i=n$, the other cases being analogous. For simplicity we identify $\R^{n-1}$ with the hyperplane $\R^{n-1}\times\{0\}\subset\R^n$, and write $x=(x',x_n)$, with $x':=(x_1,\dots,x_{n-1})$.

Before embarking on the proof of our main result, we state and prove two estimates that we will need.
\begin{lemma}\label{l:estboundaryvalues3}
Let $s>\frac{1}{2}\max(3,n)$ and $v\in H^s(\R^n)$ such that $v=0$ and $\partial_n v=0$ on $\R^{n-1}$ in the sense of trace. Let $h>0$, let $x'\in(h\Z)^{n-1}$, $\hat x'\in\R^{n-1}\times\{0\}$ and suppose that $|x'-\hat x'|_\infty<\frac h2$. Let further $Q_{h/2}(x'):=x'+(-h/2,h/2)^{n-1}$ be the $(n-1)$-dimensional axiparallel cube of edge-length $h$ centered at $x'$. If $s\le4$, we have that
\begin{equation}
	|v(x',h)-v(x',-h)-v(\hat x',h)+v(\hat x',-h)|\le Ch^{s-\frac n2}\|v\|_{H^s(Q_{h/2}(x')\times\R)},\label{eq:estvlocal}
\end{equation}
and if $s\le3$ we have that
\begin{equation}
	|v(x',0)-v(x',-h)-v(\hat x',0)+v(\hat x',-h)|\le Ch^{s-\frac n2}\|v\|_{H^s(Q_{h/2}(x')\times\R)}.\label{eq:estvlocal2}
\end{equation}
\end{lemma}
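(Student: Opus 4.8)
The estimates \eqref{eq:estvlocal} and \eqref{eq:estvlocal2} are local statements about finite differences of $v$ near the hyperplane $\R^{n-1}$, so the natural strategy is a Bramble--Hilbert / scaling argument combined with the vanishing-trace hypotheses. The key observation is that the left-hand side of \eqref{eq:estvlocal} is a bounded linear functional $\ell_h(v)$ acting on $v$ restricted to the slab $Q_{h/2}(x')\times\R$, and that it annihilates a suitable space of polynomials: since $v$ and $\partial_n v$ vanish on $\{x_n=0\}$, the natural comparison class is polynomials of the form $x_n^2 p(x',x_n)$ (or more precisely, after rescaling, polynomials vanishing to second order on $\{x_n=0\}$). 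First I would rescale to the unit scale, setting $\hat v(y) := v(x' + h y', h y_n)$ for $y=(y',y_n)$, so that $Q_{h/2}(x')\times\R$ becomes $(-1/2,1/2)^{n-1}\times\R$ and $\hat x'$ maps into $[-1/2,1/2)^{n-1}\times\{0\}$; the functional becomes $\hat v(\xi',1) - \hat v(\xi',-1) - \hat v(0,1) + \hat v(0,-1)$ evaluated at the rescaled point $\xi' = (\hat x'-x')/h$, which lies in the fixed compact set $[-1/2,1/2]^{n-1}$ (up to a harmless translation I take $x'$ to the origin).

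\textbf{The functional-analytic core.} On the fixed domain $B := (-1/2,1/2)^{n-1}\times(-2,2)$ I would consider the linear functional $L(\hat v) := \hat v(\xi',1) - \hat v(\xi',-1) - \hat v(0,1) + \hat v(0,-1)$. Because $s > \tfrac12\max(3,n)$, we have $s > \tfrac n2$, so $H^s(B)$ embeds continuously into $C(\overline B)$ and $L$ is bounded on $H^s(B)$, uniformly in $\xi' \in [-1/2,1/2]^{n-1}$; here the uniformity in $\xi'$ follows from a compactness argument or simply from the fact that point evaluation depends continuously on the point in the sup-norm. Next, $L$ vanishes on any polynomial of the form $q(y) = y_n^2\, r(y',y_n)$ with $\deg r$ bounded: indeed, since $s\le 4$, the Bramble--Hilbert argument will only require annihilation of polynomials up to degree $\lfloor s\rfloor \le 3$ that vanish to second order at $y_n=0$, and such polynomials are spanned by monomials $y_n^2$, $y_n^3$, $y_n^2 y_i$. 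One checks directly that $L$ kills $y_n^2$ (even in $y_n$, the two differences cancel), $y_n^3$ (odd, but then $\hat v(\xi',1)-\hat v(\xi',-1)$ is $\xi'$-independent $\ldots$ wait, actually $y_n^3$ gives $1-(-1)-1+(-1)=0$), and $y_n^2 y_i$ (the $\xi'$-dependence factors as $\xi_i\cdot(1+1) $, minus $0$; hmm this needs the term $\hat v(0,\pm1)$ to subtract it — yes, $y_i y_n^2$ gives $\xi_i\cdot 1 - \xi_i\cdot 1 - 0 + 0 = 0$). So $L$ annihilates $\mathcal P := \{y_n^2 r : \deg r \le 1\}$.

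\textbf{Assembling the estimate.} The crucial point is that the hypotheses on $v$ ("$v=0$ and $\partial_n v = 0$ on $\R^{n-1}$") give, after rescaling, that $\hat v=0$ and $\partial_n\hat v=0$ on $\{y_n=0\}$ as well; so when I apply the generalized Bramble--Hilbert lemma I may choose the approximating polynomial from the subspace $\mathcal P$ that respects these traces. Concretely: by a Bramble--Hilbert type argument (or Poincaré-type inequality relative to $\mathcal P$) there is a $q\in\mathcal P$ with $\|\hat v - q\|_{H^s(B)} \le C |\hat v|_{H^s(B)}$, where the seminorm on the right is the full $H^s$-seminorm (highest-order part), using that functions vanishing to order $\ge 2$ on $\{y_n = 0\}$ are controlled by their top-order derivatives — this is where the vanishing trace is essential, replacing the missing low-order terms. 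Then
\[
|L(\hat v)| = |L(\hat v - q)| \le C\|\hat v - q\|_{H^s(B)} \le C|\hat v|_{H^s(B)} \le C\|\hat v\|_{H^s(B)}.
\]
Undoing the rescaling, $\|\hat v\|_{L^2(B)} = h^{-n/2}\|v\|_{L^2(Q_{h/2}(x')\times(-2h,2h))}$ and each $j$-th order derivative picks up $h^{j - n/2}$, so the $H^s$-norm of $\hat v$ is comparable to $\max_j h^{j-n/2}|v|_{H^j}$; keeping only the top order (which is what the seminorm estimate allows) yields the factor $h^{s-n/2}$, and replacing the slab $(-2h,2h)$ by all of $\R$ in the $H^s$-norm only increases the right-hand side. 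This gives \eqref{eq:estvlocal}. For \eqref{eq:estvlocal2} the functional is $L_2(\hat v) := \hat v(\xi',0) - \hat v(\xi',-1) - \hat v(0,0) + \hat v(0,-1)$; since $\hat v$ and $\partial_n\hat v$ vanish at $y_n=0$, the terms $\hat v(\xi',0)$ and $\hat v(0,0)$ drop out, and $L_2$ reduces to $-\hat v(\xi',-1)+\hat v(0,-1)$. This annihilates polynomials $y_n^2 r$ with $r$ constant, i.e. $\mathcal P' = \mathrm{span}\{y_n^2\}$, but to run Bramble--Hilbert with top-order seminorm we only need annihilation up to degree $\lfloor s\rfloor \le 2$ of such polynomials, consistent with the restriction $s\le 3$; the rest of the argument is identical.

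\textbf{Main obstacle.} The delicate point is the Poincaré/Bramble--Hilbert inequality with the \emph{top-order} $H^s$-seminorm and the polynomial class restricted to $\{y_n^2 r\}$: one must verify that on the fixed bounded Lipschitz domain $B$ the quotient space $\{w \in H^s(B) : w = \partial_n w = 0 \text{ on } \{y_n=0\}\} / \mathcal P$ has norm equivalent to the $H^s$-seminorm. This follows from a standard compactness (Peetre--Tartar) argument once one checks that the only element of $\mathcal P$ with vanishing traces up to first order at $\{y_n = 0\}$ is $0$ — which is immediate since $y_n^2 r \equiv 0$ on $\{y_n=0\}$ automatically but its values away from that set are not zero unless $r \equiv 0$ — combined with the fact that $H^s$-seminorm plus $L^2$-norm dominates the $H^s$-norm. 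Care is also needed because $s$ is fractional; but since $H^s(B)$ with $B$ a product of intervals has a clean intrinsic (Gagliardo) seminorm and the embedding into $C(\overline B)$ is available for $s > n/2$, the compactness argument goes through unchanged. A secondary technical nuisance is that $\hat x'$ need not lie in the open cube, only in its closure — but by the $|x'-\hat x'|_\infty < h/2$ hypothesis, $\xi' \in [-1/2,1/2]^{n-1}$, a compact set, and boundedness of $L$ is uniform there, so this causes no difficulty.
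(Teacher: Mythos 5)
Your proposal is correct and follows essentially the same route as the paper: scale to unit size, observe that the four-point difference is a bounded functional on $H^s$ of the rescaled slab (uniformly in $\xi'$), check that it annihilates the degree-$\le3$ polynomials compatible with the vanishing-trace constraint (the span of $y_n^2,\,y_n^3,\,y_iy_n^2$), and conclude via the Bramble--Hilbert lemma together with a scaling argument; for \eqref{eq:estvlocal2} the smaller admissible polynomial class span$\{y_n^2\}$ forces $s\le3$, exactly as you note. The main difference is that you spell out the ``constrained'' Bramble--Hilbert step (Poincar\'e inequality on the subspace $\{w:\,w=\partial_nw=0\text{ on }\{y_n=0\}\}$ modulo the constrained polynomials via Peetre--Tartar), which the paper states tersely as ``follows from the Bramble--Hilbert lemma''; this is a correct and useful elaboration, not a different proof.
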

\begin{proof} We begin with \eqref{eq:estvlocal}. By scaling and translating we can assume that without loss of generality that $h=1$ and $x'=0$.
Because $s>\frac n2$ the left-hand side of \eqref{eq:estvlocal} is bounded by $C\|v\|_{H^s(Q_{1/2}(0)\times (-2,2))}$. Furthermore it vanishes when $v$ is a polynomial of degree at most 3. Indeed the boundary condition ensures that each monomial of degree at most 3 has degree at least 2 in $x_n$ and the left-hand side vanishes for such monomials. So \eqref{eq:estvlocal} follows from the Bramble--Hilbert lemma (applied in $H^s(Q_{1/2}(0) \times (-2,2))$). The estimate \eqref{eq:estvlocal2} can be proved analogously.
\end{proof}

\begin{lemma}\label{l:estboundaryvalues2}
Let $s>\frac32$ and $v\in H^s((0,\infty)^{n-1}\times\R)$. Suppose that for all $i\in\{1,\ldots,n-1\}$ we have $v=0$ on $\{x_i=0\}$ in the sense of trace, and that furthermore we have $\partial_n v=0$ on $\{x_n=0\}$ in the sense of trace. Let $\hat v$ be the extension by zero in the first $n-1$ variables of $v$ to $\R^n$, i.e.,
\[\hat v(x):=\begin{cases}v(x)&x\in(0,\infty)^{n-1}\times\R,\\0&\text{otherwise},\end{cases}\] and let $h>0$. If $s\le4$, then we have that
\begin{equation}\label{eq:estvtrace}
	\|\hat v(\cdot,h)-\hat v(\cdot,-h)\|_{H^{\frac12}(\R^{n-1})}\le Ch^{s-1}\|v\|_{H^s((0,\infty)^{n-1}\times\R)},
\end{equation}
and if $s\le3$, then we have that
\begin{equation}\label{eq:estvtrace2}
	\|\hat v(\cdot,0)-\hat v(\cdot,-h)\|_{H^{\frac12}(\R^{n-1})}\le Ch^{s-1}\|v\|_{H^s((0,\infty)^{n-1}\times\R)}.
\end{equation}
\end{lemma}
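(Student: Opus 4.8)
The plan is to prove \eqref{eq:estvtrace} by combining the pointwise local estimate \eqref{eq:estvlocal} from Lemma \ref{l:estboundaryvalues3} with a covering argument, using the explicit double-integral characterization of the $H^{\frac12}(\R^{n-1})$-norm. Write $w := \hat v(\cdot,h) - \hat v(\cdot,-h)$. Since $v$ vanishes on each hyperplane $\{x_i=0\}$ for $i\le n-1$ in the sense of trace, $w$ is the extension by zero of $v(\cdot,h)-v(\cdot,-h)$ from $(0,\infty)^{n-1}$, and the trace vanishing is exactly what is needed so that this zero extension does not create a jump that would spoil $H^{\frac12}$-regularity. The $L^2$-part of the norm is the easy piece: $\|w\|_{L^2(\R^{n-1})}^2 = \int_{(0,\infty)^{n-1}} |v(x',h)-v(x',-h)|^2 \ud x'$, and applying \eqref{eq:estvlocal} with $\hat x' = x'$ (so the difference reduces to $|v(x',h)-v(x',-h)|$, using also $v(\hat x',\pm h)$ terms — actually one should rather bound $|v(x',h)-v(x',-h)|$ directly; by the trace conditions a degree-$\le 3$ polynomial is a multiple of $x_n^2$ plus higher, so $v(x',h)-v(x',-h)$ need not vanish, but $v(x',h)-v(x',-h) - (\text{affine in }x_n \text{ part})$... ) — more simply, one integrates the local bound over a grid of cubes $Q_{h/2}(x')$ with $x'\in(h\Z)^{n-1}$ and sums, giving $\|w\|_{L^2}^2 \le C h^{2s-n}\cdot h^{n-1}\sum_{x'} \|v\|_{H^s(Q_{h/2}(x')\times\R)}^2 = C h^{2s-1}\|v\|_{H^s}^2$ after using finite overlap of the slabs $Q_{h/2}(x')\times\R$.

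The main work is the Gagliardo seminorm $[w]_{H^{\frac12}(\R^{n-1})}^2 = \int\!\!\int_{\R^{n-1}\times\R^{n-1}} \frac{|w(x')-w(y')|^2}{|x'-y'|^n}\ud x'\ud y'$. I would split this into the near-diagonal region $\{|x'-y'|\le h\}$ and the far region $\{|x'-y'|> h\}$. For the far region, bound $|w(x')-w(y')|^2 \le 2|w(x')|^2 + 2|w(y')|^2$ and integrate: $\int_{|x'-y'|>h}|x'-y'|^{-n}\ud y' \le C h^{-1}$, so the far part is $\le C h^{-1}\|w\|_{L^2(\R^{n-1})}^2 \le C h^{2s-2}\|v\|_{H^s}^2$, exactly the claimed order. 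For the near-diagonal region, the idea is to tile $\R^{n-1}$ by the cubes $Q := Q_{h/2}(x')$, $x'\in(h\Z)^{n-1}$, and for $x',y'$ in the same or adjacent cubes write the difference $w(x')-w(y')$ as a difference of four point-values of $v$ at height $\pm h$ at two points within $\frac h2$ (in $\ell^\infty$) of a common grid point — this is precisely the quantity estimated by \eqref{eq:estvlocal}. Summing $\int\!\!\int_{Q\times Q'}\frac{1}{|x'-y'|^n}\ud x'\ud y'$ over the bounded region of integration within a pair of adjacent cubes is $\le C h^{-1}\cdot h^{2(n-1)}$... here one must be a little careful: $\int\!\!\int_{Q\times Q}|x'-y'|^{-n}$ diverges, so one cannot simply pull the supremum of $|w(x')-w(y')|^2$ out. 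Instead I would use \eqref{eq:estvlocal} applied with the \emph{actual} pair $(x',y')$, noting its left-hand side is $\le C|x'-y'|^{s-\frac n2}\|v\|_{H^s(\ldots)}$ — i.e. a Hölder-type refinement of Lemma \ref{l:estboundaryvalues3} where the modulus of continuity scales like a fractional power of the separation; since $s>\frac n2$ this extra power $|x'-y'|^{2s-n}$ against $|x'-y'|^{-n}$ leaves $|x'-y'|^{2s-2n}$ which is integrable near the diagonal in $\R^{n-1}$ precisely when $2s-2n>-(n-1)$, i.e. $s>\frac{n+1}{2}$ — and we only assume $s>\frac n2$.

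So the cleaner route for the near-diagonal term avoids the scaled Bramble--Hilbert altogether on the diagonal: instead I would exploit that $w$ restricted to a single cube $Q$ equals (the restriction of) a function whose $H^{\frac12}(Q)$-seminorm is controlled. Concretely, for fixed $Q$, consider the map $v \mapsto v(\cdot, h) - v(\cdot,-h)$ as a map $H^s(Q\times(-2,2)) \to H^{s - \frac12}(Q)$ (a trace-type estimate, valid since $s-\frac12 > 1 > \frac12$), hence into $H^{\frac12}(Q)$ with norm bounded by $C(Q)\|v\|_{H^s(Q\times(-2,2))}$; a scaling argument (rescale $Q$ to the unit cube, track the powers of $h$: the $H^{\frac12}$-seminorm on an $h$-cube scales with an explicit $h$-power, and the $H^s$-norm likewise) turns this into $[w]_{H^{\frac12}(Q)} \le C h^{s-1}\|v\|_{H^s(Q\times(-2,2))}$. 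Then the full near-diagonal contribution is $\sum_{Q,Q' \text{ adjacent}} \int\!\!\int_{Q\times Q'}\frac{|w(x')-w(y')|^2}{|x'-y'|^n}$, and one controls the cross-cube terms $Q\neq Q'$ by again writing $|w(x')-w(y')|^2\le 2|w(x')|^2+2|w(y')|^2$ (legitimate since adjacent-but-distinct cubes have $|x'-y'|$ bounded below by a positive fraction of $h$ on all but a measure-zero set... actually they can touch, so one keeps only $Q'=Q$ and $Q'$ a strict neighbor at distance $\ge$ something; the genuinely singular part is $Q'=Q$, handled by the scaled trace estimate, and one extra layer of touching cubes handled by a translation of the same estimate on the doubled cube $2Q$). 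Summing over the $O(1)$-overlapping family $\{Q\times(-2,2)\}$ gives $\sum_Q \|v\|_{H^s(Q\times(-2,2))}^2 \le C\|v\|_{H^s((0,\infty)^{n-1}\times\R)}^2$, yielding \eqref{eq:estvtrace}. Inequality \eqref{eq:estvtrace2} is proved the same way, replacing $v(\cdot,h)-v(\cdot,-h)$ by $v(\cdot,0)-v(\cdot,-h)$ and using \eqref{eq:estvlocal2}; the restriction $s\le 3$ there comes, as in Lemma \ref{l:estboundaryvalues3}, from the fact that with only the single normal-derivative condition one can annihilate polynomials of degree $\le 2$ but not $\le 3$. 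The main obstacle I anticipate is bookkeeping the $h$-powers correctly through the rescaling of the $H^{\frac12}$-seminorm on $h$-cubes and ensuring the near-diagonal singular integral is genuinely absorbed by the scaled trace estimate rather than by a naive sup-bound, which (as noted) would require the stronger hypothesis $s>\frac{n+1}{2}$.
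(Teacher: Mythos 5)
Your near/far decomposition of the Gagliardo seminorm has a structural flaw that no amount of bookkeeping will fix: the far region is a factor of $h^{-1}$ too lossy. The bound $\int\!\!\int_{|x'-y'|>h}|x'-y'|^{-n}|w(x')-w(y')|^2 \le Ch^{-1}\|w\|_{L^2}^2$ is correct, but to close the argument you then need $\|w\|_{L^2}^2 \le Ch^{2s-1}\|v\|_{H^s}^2$, i.e.\ $\|w\|_{L^2}\le Ch^{s-\frac12}\|v\|_{H^s}$ — and at $s=4$ this is simply false. Indeed, $w = v(\cdot,h)-v(\cdot,-h)=h^2\int_{-h}^h m(t/h)\,\partial_n^3 v(\cdot,t)\,\mathrm{d}t$, and since there is no boundary condition on $\partial_n^3 v$, a generic $v\in H^4$ with $\partial_n^3 v(\cdot,0)\ne 0$ gives $\|w\|_{L^2}\sim h^3\|v\|_{H^4}$, not $h^{7/2}$. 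The reason the far region cannot be treated by the crude inequality $|w(x')-w(y')|^2\le 2|w(x')|^2+2|w(y')|^2$ is that for intermediate separations $h\ll|x'-y'|\ll 1$ the values $w(x')$ and $w(y')$ are highly correlated, and that cancellation is exactly what makes the true $H^{1/2}$-seminorm of $w$ one power of $h$ better than what the triangle inequality sees.

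There is a second gap in the near-diagonal piece. You say the ``cleaner route'' avoids Bramble--Hilbert and uses only the trace estimate plus scaling. But the trace estimate controls $\|\tilde w\|_{H^{1/2}}$ by the \emph{full} norm $\|\tilde v\|_{H^s}$ on the unit cube, and when you scale back the low-order terms of that norm acquire the wrong powers of $h$ (the $L^2$-piece scales like $h^{-n}\|v\|_{L^2(Q_{h/2}\times(-2h,2h))}^2$, which after summing over cubes gives $h^{-2}\|v\|_{L^2}^2$, not $h^{2s-2}\|v\|_{H^s}^2$). To kill the low-order terms you must subtract a polynomial, i.e.\ you do need Bramble--Hilbert — and moreover you need it applied in the subspace of functions satisfying the boundary conditions, since the trace functional $\tilde v\mapsto \tilde v(\cdot,1)-\tilde v(\cdot,-1)$ does not vanish on all polynomials of degree $\le 3$ (e.g.\ $\tilde v=\xi_i\xi_n$ gives $2\xi_i$); it only annihilates those degree-$\le3$ polynomials compatible with the boundary data, modulo constants, exactly as in Lemma~\ref{l:estboundaryvalues3}. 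You anticipate the scaling bookkeeping being the main obstacle, but the obstacle is earlier: without BH in the constrained space the near-diagonal estimate does not close, and with BH the degree-$3$ monomial $b\,\xi_n^3$ survives in the $L^2$-part (it contributes a nonzero constant to $\tilde w$), so the cube-by-cube approach caps out at $s\le 3$ for the $L^2$-norm.

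The paper avoids both problems by not decomposing the $H^{1/2}$-norm at all. It instead proves the estimate at the two endpoints $s=1$ (where $\hat v\in H^1(\R^n)$ and the ordinary trace theorem gives the $h$-independent bound directly) and $s=4$ (where a Taylor expansion in $x_n$ gives $\hat v(\cdot,h)-\hat v(\cdot,-h)=h^2\int_{-h}^h m(t/h)\,\hat w(\cdot,t)\,\mathrm{d}t$ with $\hat w$ the zero-extension of $\partial_n^3 v\in H^1_0$, hence $\hat w\in H^1(\R^n)$, giving the $H^{1/2}$-bound with the full $h^3$ factor in one stroke), and then interpolates between the endpoint spaces using the $G^s$-scale constructed in the appendix. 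That interpolation is precisely what produces the intermediate powers $h^{s-1}$ without ever needing to split the norm into near- and far-diagonal pieces or to apply Bramble--Hilbert at scale $h$.
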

\begin{proof}
Let us define the function spaces $G^s((0,\infty)^{n-1}\times\R)$ for $s\in[0,\infty)\setminus\left\{\frac12,\frac32\right\}$ as follows. When $s>\frac32$, $G^s$ is the space that is mentioned in the statement of the lemma, i.e., \[G^s((0,\infty)^{n-1}\times\R):=H^s((0,\infty)^{n-1}\times\R)\cap\{u\colon u=0\text{ on }\partial((0,\infty)^{n-1}\times\R)\}\cap\{u\colon \partial_nu=0\text{ on }(0,\infty)^{n-1}\times\{0\}\}.\]
When $\frac12<s<\frac32$,
\[G^s((0,\infty)^{n-1}\times\R):=H^s((0,\infty)^{n-1}\times\R)\cap\{u\colon u=0\text{ on }\partial((0,\infty)^{n-1}\times\R)\},\]
and if $s<\frac12$,
\[G^s((0,\infty)^{n-1}\times\R):=H^s((0,\infty)^{n-1}\times\R).\]
According to Lemma \ref{l:interpolation2} from the Appendix we have that, for $s\notin\left\{\frac12,\frac32\right\}$,
\begin{align*}
G^s((0,\infty)^{n-1}\times\R)&=\left[G^4((0,\infty)^{n-1}\times\R),G^1((0,\infty)^{n-1}\times\R)\right]_{\frac{4-s}{3}},\\
G^s((0,\infty)^{n-1}\times\R)&=\left[G^3((0,\infty)^{n-1}\times\R),G^1((0,\infty)^{n-1}\times\R)\right]_{\frac{3-s}{2}}.
\end{align*}
Thus it suffices to prove \eqref{eq:estvtrace} for $s=4$ and $s=1$ and \eqref{eq:estvtrace2} for $s=3$ and $s=1$, and then the result follows by interpolation. We prove the former two statements; the proofs of the latter two are completely analogous.

If $s=1$, the condition that $v=0$ on $\{x_i=0\}$ in the sense of trace ensures that $\hat v\in H^1(\R^n)$ and $\|\hat v\|_{H^1(\R^n)}\le \|v\|_{H^1((0,\infty)^{n-1}\times\R)}$. Now we can use standard trace theorems to bound
\begin{align*}
	\|\hat v(\cdot,h)-\hat v(\cdot,-h)\|_{H^{\frac12}(\R^{n-1})}&\le \|\hat v(\cdot,h)\|_{H^{\frac12}(\R^{n-1})}+\|\hat v(\cdot,-h)\|_{H^{\frac12}(\R^{n-1})}\\
	&\le 2\|v\|_{H^1((0,\infty)^{n-1}\times\R)}.
\end{align*}

If $s=4$ the proof is less straightforward. The main difficulty is that $\hat v$ is in general not in $H^4(\R^n)$. Instead we write
\[v(\cdot,h)=v(\cdot,0)+h\partial_nv(\cdot,0)+\frac{h^2}{2}\partial_n^2v(\cdot,0)+\int_0^h\frac{(h-s)^2}{2}\partial_n^3v(\cdot,s)\ud s.
\]
This does not make sense as a pointwise equality, but we can interpret it as an equality in $H^{\frac12}((0,\infty)^{n-1})$, with the integral on the right-hand side being understood as a Bochner integral. Similarly, we have
\[v(\cdot,-h)=v(\cdot,0)-h\partial_nv(\cdot,0)+\frac{h^2}{2}\partial_n^2v(\cdot,0)+\int_{-h}^0\frac{(h+s)^2}{2}\partial_n^3v(\cdot,s)\ud s.
\]
Because we know that $v(\cdot,0)=0$ and $\partial_nv(\cdot,0)=0$ in $H^{\frac12}((0,\infty)^{n-1})$, we deduce from this that
\begin{equation}\label{eq:identityv1}
v(\cdot,h)-v(\cdot,-h)=h^2\int_{-h}^hm\left(\frac{s}{h}\right)\partial_n^3v(\cdot,s)\ud s,\end{equation}
as an identity in $H^{\frac12}((0,\infty)^{n-1})$, where $m(t):=\begin{cases}(1-t)^2 &\mbox{for $t\geq 0$},\\(1+t)^2&\mbox{for $t\leq0$}.\end{cases}$

Let $\hat w$ be the extension by zero in the first $n-1$ variables of $\partial_n^3v$ to $\R^n$, i.e.,
\[\hat w(x):=\begin{cases}\partial_n^3v(x)&\mbox{for $x\in(0,\infty)^{n-1}\times\R$},\\0&\text{otherwise}.\end{cases}\]
Our assumptions on $v$ imply that $\partial_n^3v$ belongs to $H^1_0((0,\infty)^{n-1}\times\R)$. Therefore $\hat w\in H^1(\R^n)$ and $\|\hat w\|_{H^1(\R^n)}=\|\partial_n^3v\|_{H^1_0((0,\infty)^{n-1}\times\R)}$.

Furthermore \eqref{eq:identityv1} continues to hold for the extensions by zero of both sides, so that we also have
\[\hat v(\cdot,h)-\hat v(\cdot,-h)=h^2\int_{-h}^hm\left(\frac{s}{h}\right)\hat w(\cdot,s)\ud s\]
as an identity in $H^{\frac12}(\R^{n-1})$, and hence
\begin{align*}
	\|\hat v(\cdot,h)-\hat v(\cdot,-h)\|_{H^{\frac12}(\R^{n-1})}&\le h^2\int_{-h}^hm\left(\frac{s}{h}\right)\|\hat w(\cdot,s)\|_{H^{\frac12}(\R^{n-1})}\ud s\\
	&\le h^2\int_{-h}^h\|\hat w\|_{H^1(\R^n)}\ud s\\
	&\le 2h^3\|\partial_n^3v\|_{H^1((0,\infty)^{n-1}\times\R)}\\
	&\le 2h^3\|v\|_{H^4((0,\infty)^{n-1}\times\R)},
\end{align*}
which is \eqref{eq:estvtrace}.
\end{proof}

\begin{proof}[Proof of Lemma \ref{l:estboundaryvalues}]
We begin with \eqref{eq:estghi}. As before, we shall assume without loss of generality that $i=n$, and we identify $\R^{n-1}$ with $\R^{n-1}\times\{0\}\subset\R^n$ and write $x=(x',x_n)$.

Note that $D^h_{0,n} \tilde u(x)$ makes sense for any $x\in[0,1)^{n-1}\times\{0\}$, not only for those in $(h\Z)^n$. We denote by $g_n$ the extension by zero of $D^h_{0,n} \tilde u$ in the hyperplane $\R^{n-1}\times\{0\}$, i.e., $g_n\colon\R^{n-1}\times\{0\}\to\R$ satisfies
\[g_n(x)=\begin{cases}D^h_{0,n} \tilde u(x)&\mbox{for $x\in(0,\infty)^{n-1}\times\{0\}$},\\0&\text{otherwise}.\end{cases}\]
Then, $g_{h,n}$ is the restriction of $g_n$ to $(h\Z)^n$, and our goal will be to relate the discrete $H^{1/2}_h$-norm of $g_{h,n}$ and the continuous $H^{1/2}$-norm of $g_n$. We begin by estimating the latter.

Applying Lemma \ref{l:estboundaryvalues2} to $hg_{h,n}$ we obtain
\begin{equation}\| g_n\|_{H^{\frac12}(\R^{n-1})}=\|D^h_{0,n} \tilde u\|_{H^{\frac12}(\R^{n-1})}\le Ch^{s-2}\|\tilde u\|_{H^s(\R^n)}.\label{eq:estg_nW1/2}\end{equation}

Next, let $x'\in(h\Z)^{n-1}$, $\hat x'\in\R^{n-1}$, and suppose that $|x'-\hat x'|_\infty<\frac h2$. Recall that $Q_{h/2}(x')=x'+(-h/2,h/2)^{n-1}$ is the $(n-1)$-dimensional axiparallel cube of edge-length $h$ centered at $x'$. Then, Lemma \ref{l:estboundaryvalues3} implies that
\[|\tilde u(x',h)-\tilde u(x',-h)-\tilde u(\hat x',h)+\tilde u(\hat x',-h)|\le Ch^{s-\frac n2}\|\tilde u\|_{H^s(Q_{h/2}(x')\times \R)}.\label{eq:estD0ulocal}\]
If $x'_i > 0$ for all $i=1, \ldots, n-1$, then
\begin{equation}  \label{eq:gn_versus_D0_utilde}
|g_n(x')-g_n(\hat x')| =\frac1{2h}|\tilde u(x',h)-\tilde u(x',-h)-\tilde u(\hat x',h)+\tilde u(\hat x',-h)|.
\end{equation}
On the other hand, if $x_i \le 0$ for some $i \in \{1, \ldots, n-1\}$, then $\tilde u(x',h) = \tilde u(x', -h) = g_n(x',0)
= 0$ and
\[|g_n(\hat x')|=\begin{cases}\frac{1}{2h}|\tilde u(\hat x',h)-\tilde u(\hat x',-h)|& \mbox{for $x'\in(0,\infty)^{n-1}$},\\0&\text{otherwise}.\end{cases}\]
This, together with \eqref{eq:gn_versus_D0_utilde}, implies that we have in any case
\[|g_n(x')-g_n(\hat x')| \le\frac1{2h}|\tilde u(x',h)-\tilde u(x',-h)-\tilde u(\hat x',h)+\tilde u(\hat x',-h)|.\]
Thus we get that
\begin{equation}\label{eq:estgnlocal}
|g_n(x')-g_n(\hat x')|
\le Ch^{s-1-\frac n2}[\tilde u]_{H^s(Q_{h/2}(x')\times \R)}
		\le Ch^{s-1-\frac n2}\|\tilde u\|_{H^s(Q_{h/2}(x')\times \R)}.
\end{equation}

Now let $x', y' \in (h\Z)^{n-1}$, $\hat x'\in Q_{h/2}(x')$ and $\hat y'\in Q_{h/2}(y')$. We then have that
\[ |g_n(x') - g_n(y')| \le |g_n(\hat x') - g(\hat y')| + |g(x') - g(\hat x')| + |g(y') -g(\hat y')|.\]
This implies that
$ |g_n(x') - g_n(y')|^2  \le 3\big(  |g_n(\hat x') - g_n(\hat y')|^2 + |g_n(x') - g_n(\hat x')|^2 + |g_n(y') -g_n(\hat y')|^2 \big)$,
and, using \eqref{eq:estgnlocal}, we deduce that
\begin{equation}\label{eq:est_gn_averag}
	|g_n(x') - g_n(y')|^2 \le 3 |g_n(\hat x') - g_n(\hat y')|^2 + Ch^{2s-2-n}\|\tilde u\|^2_{H^s(Q_h(x')\times (-2h,2h))}+Ch^{2s-2-n}\|\tilde u\|^2_{H^s(Q_h(y')\times (-2h,2h))}.
\end{equation}
Thus, taking the average of \eqref{eq:est_gn_averag} over all $\hat x' \in Q_{h/2}(x')$ and $\hat y' \in Q_{h/2}(y')$, we obtain
\begin{align*}
 |g_n(x') - g_n(y')|^2&\le 3h^{2-2n} \int_{Q_{h/2}(x')}\int_{Q_{h/2}(y')}
|g_n(\hat x') - g_n(\hat y')|^2  \, \mathrm{d}\hat x' \, \mathrm{d}\hat y' \\
&\quad+Ch^{2s-2-n} \left(\|\tilde u\|^2_{H^s(Q_h(x')\times \R)}+\|\tilde u\|^2_{H^s(Q_h(y')\times \R)}\right).
\end{align*}
Observe that for $|x'-y'| \ge h$ we have
\[  |\hat x' - \hat y'| = |x'-y'-(x'-\hat x')+(y'-\hat y')| \leq |x'-y'| + |x'-\hat x'| + |y'-\hat y'| \leq |x'-y'| + h \leq 2|x'-y'|. \]
Using this, we deduce that
\begin{align}
[g_{h,n}]^2_{H^{\frac12}_h((h\Z)^{n-1})}&=\sum_{\substack{x',y'  \in (h\Z)^{n-1}\\x' \ne y'}}  \frac{|g_n(x') - g_n(y')|^2}{|x'-y'|^n}h^{2n-2}\label{eq:est_ghn}\\
&\le  3\cdot2^n \int_{\R^{n-1}} \int_{\R^{n-1}}   \frac{|g_n(\hat x') - g_n(\hat y')|^2}{|\hat x'-\hat y'|^n} \, \mathrm{d}\hat x' \, \mathrm{d}\hat y' \nonumber\\
&\quad +   Ch^{n+2s-4} \sum_{\substack{x',y'  \in (h\Z)^{n-1}\\x' \ne y'}} \frac{1}{|y'-x'|^n}  \|\tilde u\|^2_{H^s(Q_h(x')\times \R)} \nonumber\\
&\quad +   Ch^{n+2s-4} \sum_{\substack{x',y'  \in (h\Z)^{n-1}\\x' \ne y'}}  \frac{1}{|x'-y'|^n}  \|\tilde u\|^2_{H^s(Q_h(y')\times \R)}.\nonumber
\end{align}
The first term on the right-hand side is a constant times $[g_n]^2_{H^{\frac12}(\R^{n-1})}$. To estimate the second term, notice that
\[ \sum_{y' \in (h\Z)^{n-1}, y' \ne x'}\frac{1}{|y'-x'|^n} =  \frac1{h^{n-1}} \sum_{y' \in (h\Z)^{n-1}, y' \ne x'}   \frac{1}{|y'-x'|^n} h^{n-1}
\le \frac{C}{h^{n-1}} \int_{|y'-x'| \ge h}  \frac{1}{|y'-x'|^n} \, \mathrm{d}y' \le \frac{C}{h^n}\]
and
\[\sum_{x' \in (h\Z)^{n-1}}\|\tilde u\|^2_{H^s(Q_h(x')\times \R)}\le\|\tilde u\|^2_{H^s(\R^n)},\]
by superadditivity of the fractional Sobolev norm.

Together with the analogous estimate for the third term and \eqref{eq:estg_nW1/2} we arrive at
\begin{align}
	[g_{h,n} ]^2_{H^{\frac12}_h((h\Z)^{n-1})} &\le C [g_n]^2_{H^{\frac12}(\R^{n-1})} + Ch^{2s-4} \|\tilde u\|^2_{H^s(\R^n)}\nonumber\\
	&\le Ch^{2s-4}\|\tilde u\|^2_{H^s(\R^n)}\nonumber\\
	&\le Ch^{2s-4}\|u\|^2_{H^s(\Omega)}.\label{eq:est_ghn2}
\end{align}
It remains to estimate $\|g_{h,n}\|_{L^2_h((h\Z)^{n-1})}$. A simple way to do so is to observe that we have a Poincar\'{e}-type inequality. Indeed, $g_{h,n}$ is supported in $\left[0,\frac23\right]^{n-1}\cap(h\Z)^{n-1}$ and therefore
\begin{align*}
[g_{h,n} ]^2_{H^{\frac12}_h((h\Z)^{n-1})}&=\sum_{\substack{x',y'  \in (h\Z)^{n-1}\\x' \ne y'}}  \frac{|g_n(x') - g_n(y')|^2}{|x'-y'|^n}h^{2n-2}\\
&\ge\sum_{x' \in [0,1)^{n-1}\cap (h\Z)^{n-1}}\sum_{y' \in [-2,-1)^{n-1}\cap (h\Z)^{n-1}}  \frac{|g_n(x') - g_n(y')|^2}{|x'-y'|^n}h^{2n-2}\\
&\ge\sum_{x' \in [0,1)^{n-1}\cap (h\Z)^{n-1}}\sum_{y' \in [-2,-1)^{n-1}\cap (h\Z)^{n-1}}  \frac{|g_n(x')|^2}{(3\sqrt{n})^n}h^{2n-2}\\
&\ge \frac{1}{(3\sqrt{n})^n}\sum_{x' \in [0,1)^{n-1}\cap (h\Z)^{n-1}}h^{n-1}|g_{h,n}(x')|^2\\
&\ge\frac{1}{(3\sqrt{n})^n}\|g_{h,n}\|^2_{L^2((h\Z)^{n-1})}.
\end{align*}
Combining this with \eqref{eq:est_ghn2} we obtain \eqref{eq:estghi}. The proof of \eqref{eq:estg*hi} is similar, with the only difference that we use
\eqref{eq:estvlocal2} and \eqref{eq:estvtrace2} instead of \eqref{eq:estvlocal} and \eqref{eq:estvtrace}.
\end{proof}

\section{A discrete inverse trace theorem on the cube}\label{s:discretetrace}
We want to construct a function $\hat E$ such that $\hat E$ and $E$ agree on $\Gamma^h$ and such that the $H^2_h$-norm of $\hat E$ is small (and similarly for $\hat E^*$). We now state the two lemmas that we will need to establish Theorems \ref{t:mainthm} and \ref{t:mainthm2}, respectively.

\begin{lemma}\label{l:discretetrace}
Let $\frac{1}{2}\max(3,n)<s\le 4$ and let $\tilde{u}$ be as in Lemma \ref{l:extension}. Then, there is a function $\hat E$ on $\Omega^h$ such that
\begin{alignat*}{2}
		\hat E&=0 \qquad &&\text{on }\Gamma^h,\\
		D^h_{0,\nu} \hat E&=D^h_{0,\nu} \tilde u \qquad &&\text{on }\Gamma^h,
\end{alignat*}
and such that $\|\nabla_h^2\hat E\|_{L^2(\Omega^h)}\le Ch^{s-2}\|u\|_{H^s(\Omega)}$.
\end{lemma}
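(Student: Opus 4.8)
The plan is to build $\hat E$ explicitly from the boundary data $g := D^h_{0,\nu}\tilde u$ on $\Gamma^h$ by an extension procedure that is as local as possible and then control it using Lemma \ref{l:estboundaryvalues}. Since we have localized $u$ (hence $\tilde u$) to be supported in $[0,\tfrac23)^n$, only the faces $\{x_i=0\}$ of the cube carry nontrivial data, and near each such face we want an extension that (i) vanishes on $\Gamma^h$, (ii) has discrete normal derivative equal to $g$ on $\Gamma^h$, and (iii) whose discrete Hessian has $L^2_h$ norm controlled by $[g]_{H^{1/2}_h(\Gamma^h)}$. The last bullet is exactly a discrete inverse trace inequality for the $H^{1/2}$ boundary norm, which is the natural trace space for $H^2$ on a half-space; by Lemma \ref{l:estboundaryvalues} the right-hand side is $\le Ch^{s-2}\|u\|_{H^s(\Omega)}$, which is what we want.

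First I would treat a single face, say $\{x_n=0\}$, and the half-space model: given a lattice function $g_{h}$ on $(h\Z)^{n-1}$ supported in a bounded region, construct $w$ on $(h\Z)^{n-1}\times(h\N_0)$ (or on $\Omega^h$ after rescaling) with $w=0$ on $\{x_n=0\}$ and $D^h_{0,n}w = g_h$ on $\{x_n=0\}$, i.e. $w(x',h) = 2h\,g_h(x') $ (recall $w(x',-h)=0$ because of the vertex/edge convention in the footnote, or more simply because $w$ will be supported near the face and vanish at $x_n=-h$). The construction should mirror the extension in Section \ref{s:extension} / \cite[Section 11.5]{Lions1972}: take the discrete Fourier representation of the face data $g_h$ in the tangential variables, and extend in $x_n$ by $\hat w(\xi',x_n) = 2h\, \hat g_h(\xi')\, e^{-x_n/h}\,\phi(x_n)$ — or, more robustly for a cube, by an explicit kernel like a discretized Poisson-type extension $w(x',x_n) = \sum_{y'} K_h(x'-y',x_n) g_h(y')$ with $K_h$ a smooth rapidly-decaying (in units of $h$) profile that equals the right thing on the first two layers. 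One then multiplies by a fixed smooth cutoff $\chi(x_n)$, supported in $[0,\tfrac12)$ and $\equiv 1$ near $0$, so that $w$ is genuinely supported away from the opposite face $\{x_n=1\}$ and from the other faces (using that $g_h$ is supported in $[0,\tfrac23)^{n-1}$). The standard continuous estimate $\|\nabla^2 w\|_{L^2} \le C\|g\|_{H^{1/2}}$ has a direct discrete analogue, proved most cleanly via the discrete Fourier transform: $\|\nabla_h^2 w\|^2_{L^2_h}$ becomes $\int \sum_{x_n} |m_h(\xi)|^2 |\hat g_h(\xi')|^2 |e^{-x_n/h}|^2 \,\mathrm d\xi$ with multiplier growth $|m_h(\xi)|^2 \sim |\xi|^2$ (after the $e^{-x_n/h}$-sum contributes a factor $\sim h$), reproducing $[g_h]^2_{H^{1/2}_h} + h^{-1}\|g_h\|^2_{L^2_h}$ up to constants; the cutoff contributes only lower-order terms absorbed by the $L^2_h$ part, which we already control via the Poincaré-type bound in Lemma \ref{l:estboundaryvalues}.

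Then I would assemble the global $\hat E$ on $\Omega^h$ by summing the $n$ one-face extensions $w^{(i)}$, one for each face $\{x_i=0\}$, each already supported near its own face. Because each $w^{(i)}$ vanishes on all faces of $\Gamma^h$ (it is cut off before reaching the others) and has the correct discrete normal derivative on its own face and zero discrete normal derivative on the others (again by support), the sum $\hat E := \sum_i w^{(i)}$ satisfies $\hat E = 0$ and $D^h_{0,\nu}\hat E = D^h_{0,\nu}\tilde u$ on all of $\Gamma^h$, using that $D^h_{0,\nu}\tilde u$ restricted to face $i$ is exactly $g_{h,i}$ from Lemma \ref{l:estboundaryvalues} (extension by zero in the tangential face variables, which matches since $\tilde u$ is supported in $[0,\tfrac23)^n$ and the conventions at singular boundary points). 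Finally $\|\nabla_h^2\hat E\|_{L^2_h(\Omega^h)} \le \sum_i \|\nabla_h^2 w^{(i)}\|_{L^2_h} \le C\sum_i \|g_{h,i}\|_{H^{1/2}_h} \le Ch^{s-2}\|u\|_{H^s(\Omega)}$ by Lemma \ref{l:estboundaryvalues}.

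The main obstacle I expect is the discrete inverse trace estimate itself — matching the discrete Hessian energy of the explicit extension to the discrete $H^{1/2}_h$ seminorm of the data, uniformly in $h$, and in particular dealing with the interaction between the sharp-cutoff-in-$x_n$ and the discrete differences (the cutoff is not harmonic, so $\Delta_h^2 w \ne 0$ and cross terms $D^h_i D^h_{-j} w$ mixing a tangential and the normal direction must be estimated). A secondary technical nuisance is the behavior near edges and corners of $\Omega^h$, where several faces meet: one must check that the separate face extensions do not interfere there and that the boundary conditions are consistent with the "set $U=0$ at all points of $\tilde\Omega^h$ at distance $h$ from a singular point" convention of the footnote — the localization of $\tilde u$ to $[0,\tfrac23)^n$ is precisely what keeps this clean, since then all the action is near the single corner at the origin and away from the far faces.
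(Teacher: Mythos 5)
Your overall strategy — extend face by face via a discrete Fourier/Poisson-type extension in the normal variable, cut off, and sum — is the same one the paper uses, and the bookkeeping via Lemma \ref{l:estboundaryvalues} is also correct. But there is a genuine gap in the middle.

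You claim that each one-face extension $w^{(i)}$ "vanishes on all faces of $\Gamma^h$ (it is cut off before reaching the others) and has ... zero discrete normal derivative on the others (again by support)." This is false. The Fourier extension from the face $\{x_n=0\}$, say
\[
a(x',x_n)=\sum_{k'}\frac{\gamma_{k'}}{\cosh(|k'|h)}\,x_n\,\mathrm{e}^{-|k'|x_n}\,\mathrm{e}^{i\pi k'\cdot x'},
\]
spreads in the tangential variables as soon as $x_n>0$: the fact that $g_{h,n}$ is supported in $[0,\tfrac23)^{n-1}$ does \emph{not} make $a(\cdot,x_n)$ supported away from $\{x_j=0\}$ for $j<n$. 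So the extension (even after a cutoff $\chi(x_n)$, or a tensor cutoff $\eta(x_1)\cdots\eta(x_n)$ supported in $[-1,1]^n$) will generically have nonzero values, and nonzero $D^h_{0,j}$, on the adjacent faces $\{x_j=0\}$, $j\neq n$. The cutoff only kills the far faces $\{x_j=1\}$. Consequently the sum $\hat E=\sum_i w^{(i)}$ does not satisfy $\hat E=0$ or $D^h_{0,\nu}\hat E=D^h_{0,\nu}\tilde u$ on $\Gamma^h$.

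This is exactly the point the paper addresses after the cutoff: it applies, slicewise in $x_n$, a discrete projection $R_h$ onto functions vanishing to first order on the tangential faces (a tensorized discrete version of the Lions--Magenes restriction operator with $\lambda_{-1}=-3$, $\lambda_{-2}=2$), checks that $R_h$ commutes with $D^h_{\pm n}$ and fixes $g_{h,n}$ on $[0,\infty)^{n-1}$, and proves the $H^2_h$-boundedness of $R_h$ (including the delicate term $D^h_iD^h_{-i}R_hu_h$ at $\{x_i=0\}$, which requires an explicit identity rewriting it as a combination of second differences of $u_h$). That projection step is essential and missing from your plan; without it the boundary conditions on the adjacent faces fail. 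A secondary point: your profile $\mathrm{e}^{-x_n/h}$ is not matched to the tangential frequency; the decay rate must be $\sim |k'|$ in $x_n$ (as in the kernel above) to reproduce the $H^{1/2}$ scaling, and the prefactor $x_n$ is what makes the extension vanish at $x_n=0$ without needing an additional correction there.
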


\begin{lemma}\label{l:discretetrace2}
Let $\frac{1}{2}\max(3,n)<s\le 3$ and let $\tilde{u}$ be as in Lemma \ref{l:extension}. Then, there is a function $\hat E^*$ on $\Omega^h$ such that
\begin{alignat*}{2}
		\hat E^*&=0 \qquad &&\text{on }\Gamma^h,\\
		D^h_\nu \hat E^*&=D^h_\nu \tilde u \qquad &&\text{on }\Gamma^h,
\end{alignat*}
and such that $\|\nabla_h^2\hat E^*\|_{L^2(\Omega^h)}\le Ch^{s-2}\|u\|_{H^s(\Omega)}$.
\end{lemma}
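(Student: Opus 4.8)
The strategy is to reduce Lemma \ref{l:discretetrace2} to Lemma \ref{l:estboundaryvalues} (specifically estimate \eqref{eq:estg*hi}) together with an abstract discrete inverse trace theorem. After localizing near one corner and assuming (as throughout) that $u$, and hence $\tilde u$, is supported in $\left[0,\frac23\right)^n$, the boundary data $D^h_\nu\tilde u$ on $\Gamma^h$ is supported near the $2^n$ faces of the cube, and by the structure of the extension we may treat each face separately; the extension by zero across the edges and vertices is exactly $g^*_{h,i}$ from Lemma \ref{l:estboundaryvalues}. That lemma gives
\[ [D^h_\nu\tilde u]_{H^{\frac12}_h(\Gamma^h)}\le Ch^{s-2}\|u\|_{H^s(\Omega)}.\]
So it remains to prove: \emph{for every lattice function $g$ on $\Gamma^h$ there exists $w$ on $\Omega^h$ (extended by zero to $\tilde\Omega^h\setminus\Omega^h$, so $w=0$ on $\Gamma^h$ automatically) with $D^h_\nu w = g$ on $\Gamma^h$ and $\|\nabla_h^2 w\|_{L^2_h(\Omega^h)}\le C[g]_{H^{\frac12}_h(\Gamma^h)}$}; then taking $\hat E^* := w$ for $g = D^h_\nu\tilde u$ finishes the proof. (Note $D^h_\nu w = g$ on $\Gamma^h$ is equivalent to prescribing the single layer of values of $w$ one step inside $\Gamma^h$, since $w=0$ on $\Gamma^h$.)

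The construction of $w$ is the heart of the matter and is where I expect the real work to lie. I would build $w$ face by face and sum the contributions, so it suffices to handle one face, say $\{x_n=0\}$, with data $g$ supported in $\left[0,\frac23\right)^{n-1}\cap(h\Z)^{n-1}$, and produce $w$ with $w\equiv 0$ on $\Gamma^h$, $\frac1h w(x',h) = g(x')$ for $x'\in\Gamma^h$ on that face, $\|\nabla_h^2 w\|_{L^2_h}\le C[g]_{H^{\frac12}_h}$, and with $w$ vanishing near all the \emph{other} faces so that the face-by-face contributions do not interfere. The natural device is a discrete analogue of the classical harmonic (or Poisson-type) extension: write $g$ in terms of its discrete Fourier series on the face, multiply the $k$-th mode by a decay factor $\rho(h|k|)^{j}$ in the $x_n$-direction (with $\rho$ a suitable symbol $\le 1$ with exponential-type decay, e.g. arising from the discrete Laplacian), scale the amplitude by $h$ so that the first normal difference quotient reproduces $g$, and then \emph{cut off} the resulting function both in $x_n$ (so it is supported in, say, $0\le x_n\le\frac13$ and thus never sees the face $\{x_n=1\}$) and in $x'$ near the hyperplanes $\{x_i=0\}$, $i\le n-1$. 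Because $g$ is supported away from those hyperplanes the cut-off in $x'$ costs nothing. I would then check, by the Plancherel identity on the torus, that each of the nine entries $D^h_iD^h_{-j}w$ has $L^2_h$-norm controlled by $\sum_k |\hat g(k)|^2 |k|$ — the half-derivative gain coming from the summability of $\sum_{m\ge 1}\rho(h|k|)^{2m}(\text{difference-quotient symbols})^2$ against one extra power of $|k|$ — which is exactly $[g]_{H^{\frac12}_h}^2$ up to constants. Handling the cut-offs (commutators of difference quotients with the cut-off functions, which are harmless Leibniz-type error terms of lower order once the main part already decays) is the routine-but-fiddly part; the conceptually delicate point is choosing the symbol $\rho$ and the amplitude normalization so that $D^h_\nu w$ reproduces $g$ \emph{exactly} on $\Gamma^h$ while still giving the $H^{\frac12}_h\to H^2_h$ bound with an $h$-independent constant.

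The main obstacle, then, is the construction and estimation of this discrete Poisson-type extension with the cut-offs — in particular verifying the Fourier-side bound uniformly in $h$ and $1/h\in\N$, and making sure the cut-off near the other faces neither destroys the boundary condition (it doesn't, by the support of $g$) nor introduces an uncontrolled term in $\|\nabla_h^2 w\|_{L^2_h}$. Everything else (the localization, the reduction to a single face, and the invocation of \eqref{eq:estg*hi}) is bookkeeping. The proof of Lemma \ref{l:discretetrace} is identical except that $D^h_{0,\nu}$ replaces $D^h_\nu$, which only changes the amplitude normalization of the extension (one divides by the symbol of the centered difference $\frac1{2h}(\rho - \rho^{-1})$ rather than $\frac1h(\rho-1)$), and one uses \eqref{eq:estghi} in place of \eqref{eq:estg*hi}, which is why there $s$ may go up to $4$ rather than only $3$.
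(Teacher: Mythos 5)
Your overall architecture — reduce to one face, use the discrete Fourier series of $g^*_{h,n}$, build a Poisson-type extension whose first one-sided normal difference quotient reproduces the data, estimate $\nabla_h^2$ on the Fourier side against $[g]_{H^{1/2}_h}$, then handle the other faces and invoke \eqref{eq:estg*hi} — matches the paper's scheme, and the modest deviation of using a genuinely discrete decay symbol $\rho(h|k'|)^j$ rather than the paper's continuous kernel $x_n\,\mathrm{e}^{-|k'|x_n}$ sampled at lattice points is a reasonable variant. (You should also note, as the paper does, that the cut-off estimates need the full $\|g^*_{h,n}\|_{H^{1/2}_h}$ and not merely the seminorm; this is covered by the Poincar\'e step at the end of the proof of Lemma \ref{l:estboundaryvalues}.)

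However, there is a genuine gap in the step you dismiss as costless. You claim that a cut-off of $w$ in $x'$ near the hyperplanes $\{x_i=0\}$, $i\le n-1$, ``costs nothing because $g$ is supported away from those hyperplanes.'' This is not the case: $g^*_{h,n}$ is supported in $(0,\infty)^{n-1}\cap[0,\tfrac23)^{n-1}$, so it vanishes \emph{at} $x_i=0$ but is in general nonzero already at $x_i=h$, i.e., one lattice step away — its support is not bounded away from the hyperplanes uniformly in $h$. Consequently an $h$-independent smooth cut-off that vanishes near $\{x_i=0\}$ would alter $w(x',h)$ at points where $g$ is nonzero, destroying the interpolation condition $D^h_\nu w=g$ on $\{x_n=0\}$; while a cut-off at scale $h$ produces Leibniz errors of size $h^{-1}$ that ruin the $\nabla_h^2$ bound; and a hard truncation to $\{x_i\ge 0\}$ introduces cross-boundary second differences of size $w_0(\cdot,0_i)/h^2$ that are not controlled, since the Fourier (Poisson) extension of a function supported in $(0,\infty)^{n-1}$ does not vanish on $\{x_i=0\}$ for $x_n>0$. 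The paper's resolution is a real ingredient, not bookkeeping: it builds a discrete projection $R_h^*$ onto the zero-trace subspace, modeled on a tensorized Lions--Magenes restriction operator, and verifies by an explicit identity that the second differences crossing $\{x_i=0\}$ satisfy $D^h_iD^h_{-i}R_hu_h(x')=2D^h_iD^h_{-i}u_h(x')+4D^h_iD^h_{-i}u_h(x'-he_i)$, so $\|\nabla_h^2\|$ is preserved. Your proposal does not supply this (or an equivalent) device, and without it the enforcement of the boundary conditions on the transverse faces is unjustified.
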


\begin{proof}[Proof of Lemma \ref{l:discretetrace}]
Because we have localized $\tilde u$, $D^h_{0,\nu} \tilde u$ has nonzero boundary values only on the faces
$\Gamma^h\cap\{x_i=0\}$. We can deal with the faces separately. In fact we will construct functions
$\hat E_i$ for $i\in\{1,\ldots, n\}$ such that $\hat E_i=0$ on $\Gamma_h$, $D^h_{0,\nu} \hat E_i=D^h_{0,\nu} \tilde u$ on $x_i=0$ while $D^h_{0,\nu} \hat E_i=0$ on $\Gamma_h\setminus \{x_i=0\}$, satisfying the estimate $\|\nabla_h^2\hat E_i\|_{L^2(\Omega^h)}\le Ch^{s-2}\|u\|_{H^s(\Omega)}$. Then we can choose $\hat E=\sum_i\hat E_i$, which will have the desired properties. As the $\hat E_i$ can be constructed analogously, we shall focus on $\hat E_n$ only.

Recall the function $g_{h,n}$, the extension by zero of $D^h_{0,n} \hat u$. Thanks to our assumption, $g_{h,n}$ is supported in $\left[0,\frac23\right]^{n-1}\cap (h\Z)^{n-1}$. We can extend this function periodically with period 2 and represent it by its discrete Fourier series

\[ g_{h,n}(x') = \sum_{k'\in\left\{-\frac1h+1,\ldots,\frac1h\right\}^{n-1}}\gamma_{k'}{\rm e}^{i\pi(k'\cdot x')},\]
where
\[\gamma_{k'}=\left(\frac h2\right)^{n-1}\sum_{\xi'\in[-1,1)^{n-1}\cap (h\Z)^{n-1}}g_{h,n}(\xi')\,{\rm e}^{-i\pi k'\cdot\xi'}\]
and $k':=(k'_1,\ldots,k'_{n-1})\in\Z^{n-1}$.

It is easy to verify that %
\begin{equation}\label{eq:fourierW1/2}
	\sum_{k'\in\left\{-\frac1h+1,\ldots,\frac1h\right\}^{n-1}}  (1+|k'|)\gamma_{k'}^2\le C\|g_{h,n}\|^2_{H^{\frac12}_h((h\Z)^{n-1})}.
\end{equation}
Indeed, the Fourier norm is controlled by the $H^{\frac12}_h$-norm on the torus $([-1,1]^{n-1}\cap (h\Z)^{n-1})/_\sim$ (compare, e.g., \cite[Section 2.3]{Hackbusch1981}) and the latter is bounded by the $H^{\frac12}_h$-seminorm on $(h\Z)^{n-1}$ because the support of $g_{h,n}$ is bounded away from $\partial[-1,1]^{n-1}$.

Define
\[ a(x',x_n) := \sum_{k'\in\left\{-\frac1h+1,\ldots,\frac1h\right\}^{n-1}}  \frac{\gamma_{k'}}{\cosh (|k'|h)}x_n\,{\rm e}^{-|k'|x_n}\,{\rm e}^{i\pi k'\cdot x'}.\]
It is then easy to check that $a(x',0)=0$ and $D^h_{0,n} a(x',0)=g_{h,n}(x')$ for $x'\in(-1,1)^{n-1}\cap (h\Z)^{n-1}$. Furthermore, the $H^2_h$-norm of $a$ is controlled. Indeed, we have that
\[ \nabla_h^2 a(x',x_n) =   \sum_{k'\in\left\{-\frac1h+1,\ldots,\frac1h\right\}^{n-1}} \sigma(k',h,x_n)\gamma_{k'}\,{\rm e}^{i\pi k'\cdot x'},\]
where the coefficients $\sigma(k',h,x_n)$ satisfy $|\sigma(k',h,x_n)|\le C|k'|(|k'|x_n+1){\rm e}^{-|k'|x_n}$ (this is seen by observing that the term with
the `worst' behavior is $D^h_{-n}D^h_n\left(x_n{\rm e}^{-|k'|x_n}{\rm e}^{i\pi k'\cdot x'}\right)\approx (|k'|^2x_n-2|k'|){\rm e}^{-|k'|x_n}{\rm e}^{i\pi k'\cdot x'}$).
Therefore, using orthogonality in $x'$ we get, for $x_n\ge0$,
\begin{align*}
	\sum_{x'\in[-1,1)^{n-1}\cap(h\Z)^{n-1}} h^{n-1}|\nabla_h^2 a(x_1,x_2)|^2  &=2^{n-1}\sum_{k'\in\left\{-\frac1h+1,\ldots,\frac1h-1\right\}^{n-1}}|\sigma(k',h,x_n)|^2|\gamma_{k'}|^2\\
	&\le C\sum_{k'\in\left\{-\frac1h+1,\ldots,\frac1h-1\right\}^{n-1}}|k'|^2(|k'|^2x_n^2+1)\,{\rm e}^{-2|k'|x_n}|\gamma_{k'}|^2,
\end{align*}
and hence
\[\sum_{x\in [-1,1)^{n-1}\times[0,2]\cap(h\Z)^n} h^n |\nabla_h^2 a(x',x_n)|^2 \le Ch\sum_{x_n\in[0,2]\times h\Z}\sum_{k'\in\left\{-\frac1h+1,\ldots,\frac1h-1\right\}^{n-1}}|k'|^2(|k'|^2x_n^2+1)\,{\rm e}^{-2|k'|x_n}|\gamma_{k'}|^2.\]
Next, we use the estimate
\[\sum_{x_n\in[0,2]\cap h\Z}hx_n^\alpha\,{\rm e}^{-2|k'|x_n}\le C_\alpha\int_0^\infty \xi^\alpha {\rm e}^{-2|k'|\xi} \, \mathrm{d}\xi
= C_\alpha\frac{1}{|k'|^{1+\alpha}} \int_0^\infty \theta^2 \,{\rm e}^{-2\theta} \, \mathrm{d}\theta\le \frac{C_\alpha}{|k'|^{1+\alpha}}\]
for $\alpha=2$ and $\alpha=0$ to deduce that
\[\sum_{x\in [-1,1)^{n-1}\times[0,2]\cap(h\Z)^n} h^n |\nabla_h^2 a(x',x_n)|^2\le C\sum_{k'\in\left\{-\frac1h+1,\ldots,\frac1h-1\right\}^{n-1}}|k'||\gamma_{k'}|^2,\]
and thus, taking into account \eqref{eq:fourierW1/2} and \eqref{eq:estghi},
\begin{equation}\|\nabla_h^2a\|_{L^2_h((-1,1)^{n-1}\times[0,2]\cap(h\Z)^n)}\le C[g_{h,n}]_{H^{\frac{1}{2}}_h((h\Z)^{n-1})}\le Ch^{s-2}\|u\|_{H^s(\Omega)}.\label{eq:estnabla2a}
\end{equation}
Similarly, we estimate
\begin{align}
\|\nabla_ha\|_{L^2_h((-1,1)^{n-1}\times[0,2]\cap(h\Z)^n)}&\le Ch^{s-2}\|u\|_{H^s(\Omega)},\label{eq:estnablaa}\\
\|a\|_{L^2_h((-1,1)^{n-1}\times[0,2]\cap(h\Z)^n)}&\le Ch^{s-2}\|u\|_{H^s(\Omega)}\label{eq:esta}
\end{align}
(note that for these estimates we actually need control of $\|g_{h,n}\|_{H^{\frac{1}{2}}_h((h\Z)^{n-1})}$, not just of $[g_{h,n}]_{H^{\frac{1}{2}}_h((h\Z)^{n-1})}$).

Let $\eta\in\C_c^\infty(\R)$ be such that $\eta=1$ in $\left[-\frac34,\frac34\right]$, $\eta=0$ in $\R\setminus[-1,1]$, and let \[\tilde a(x):=\eta(x_1)\cdots\eta(x_n)a(x).\]
Because $a=0$ on $\{x_i=0\}$ for all $i$, we have that $\tilde a=0$ on $\Gamma^h$. Furthermore, $D^h_{0,n}  a=0$ except possibly in $\left[-\frac23,\frac23\right]^{n-1}\times\{0\}$, and the product $\eta(x_1)\cdots\eta(x_n)$ is equal to the constant 1 in a neighborhood of that set. Therefore, $D^h_{0,n} \tilde a=D^h_{0,n}  a=g_{h,n}$ on $\{x_n=0\}$.

Using the estimates \eqref{eq:estnabla2a}, \eqref{eq:estnablaa}, \eqref{eq:esta} and the discrete product rule, we also obtain
\begin{align}
&\|\nabla_h^2\tilde a\|_{L^2_h(\Omega^h)}\nonumber\\
&\quad\le C\left(\|\nabla_h^2a\|_{L^2_h((-1,1)^{n-1}\times[0,2]\cap(h\Z)^n)}+\|\nabla_ha\|_{L^2_h((-1,1)^{n-1}\times[0,2]\cap(h\Z)^n)}+\|a\|_{L^2_h((-1,1)^{n-1}\times[0,2]\cap(h\Z)^n)}\right)\nonumber\\
&\quad\le Ch^{s-2}\|u\|_{H^s(\Omega)}.\label{eq:estnabla2b}
\end{align}

Unfortunately, $\tilde a$ does not yet have the correct boundary values at $\{x_i=0\}$ for $1\le i\le n-1$. To rectify this we use a discrete projection from $H^2$ to $H^2_0$. First we define the corresponding continuous projection. It is defined in a similar way as the extension we used in the proof of Lemma \ref{l:extension}, namely by tensorizing the restriction operator from \cite[Section 11.5]{Lions1972}. Thus we choose $\lambda_{-1},\lambda_{-2} \in \R$ such that
\[
   \lambda_{-1}  +\lambda_{-2} 2^k= (-1)^{k+1} \quad \mbox{for $k \in \{0,1\}$}.
\]
(i.e.,  $\lambda_{-1}=-3$, $\lambda_{-2}=2$);  we let $\lambda_1=1$ and define a restriction operator $R\colon H^2(\R^{n-1})\to H^2_0((0,\infty)^{n-1})$ by
\[Ru(x):=\sum_{\varepsilon_1\in\{1,-1,-2\}}\ldots\sum_{\varepsilon_n\in\{1,-1,-2\}}\lambda_{\varepsilon_1}\cdot\ldots\cdot\lambda_{\varepsilon_{n-1}}u(\varepsilon_1x_1,\ldots,\varepsilon_{n-1}x_{n-1}).\]
It is easy to check that we indeed have $Ru\in H^2_0((0,\infty)^{n-1})$ and $\|Ru\|_{H^2((0,\infty)^{n-1})}\le C\|u\|_{H^2(\R^{n-1})}$. If we extend $Ru$ by zero to $\R^{n-1}$ we can also consider $R$ as an operator mapping $H^2(\R^{n-1})$ to itself. Note that if $x'\in(h\Z)^{n-1}$, then $Ru(x')$ depends only on $u|_{(h\Z)^{n-1}}$. Thus we can define $R_h\colon H^2_h((h\Z)^{n-1})\to H^2_h((h\Z)^{n-1})$ by
\[R_hu_h(x'):=\begin{cases}Ru(x')&\mbox{for $x'\in[0,\infty)^{n-1}$},\\
Ru(x'+2he_i)&\mbox{for $x'\in [0,\infty)^{i-1}\times\{-h\}\times[0,\infty)^{n-1-i}$},\\
0&\text{otherwise}.\end{cases}\]
We claim that
\begin{align}
\|R_hu_h\|_{L^2_h([0,\infty)^{n-1}\cap(h\Z)^{n-1})}&\le C\|u_h\|_{L^2_h((h\Z)^{n-1})},\label{eq:estR}\\
\|\nabla_hR_hu_h\|_{L^2_h([0,\infty)^{n-1}\cap(h\Z)^{n-1})}&\le C\|\nabla_hu_h\|_{L^2_h((h\Z)^{n-1})},\label{eq:estnablaR}\\
\|\nabla_h^2R_hu_h\|_{L^2_h([0,\infty)^{n-1}\cap(h\Z)^{n-1})}&\le C\|\nabla_h^2u_h\|_{L^2_h((h\Z)^{n-1})}.\label{eq:estnabla2R}
\end{align}
Indeed, these estimates follow from the discrete chain rule. The only exception are the terms $D^h_iD^h_{-i}R_hu_h(x')$ in \eqref{eq:estnabla2R}, which are not, a priori, controlled on $\{x_i=0\}$. However an explicit calculation shows that for such $x'$ one has
\begin{align*}
D^h_iD^h_{-i}R_hu_h(x')&=2\frac{R_hu_h(x'+he_i)}{h^2}\\
&=2\frac{u_h(x'+he_i)-3u_h(x'-he_i)+2u_h(x'-2he_i)}{h^2}\\
&=2\frac{u_h(x'+he_i)-2u_h(x')+u_h(x'-he_i)}{h^2}+4\frac{u_h(x')-2u_h(x'-he_i)+u_h(x'-2he_i)}{h^2}\\
&=2D^h_iD^h_{-i}u_h(x')+4D^h_iD^h_{-i}u_h(x'-he_i),
\end{align*}
so that these terms, which are `crossing the boundary', are still controlled.\footnote{It is of course no coincidence that we have such an identity. In fact, $\nabla^2(Ru)$ is bounded in the $L^2$ norm thanks to the construction of $R$, and one can therefore also expect $R_h$ to be well-behaved at the boundary.}

We now apply $R_h$ along every slice $(h\Z)^{n-1}\times\{x_n\}$, i.e.,  we set
\[b(x):=R_h\tilde a(\cdot,x_n)(x').\]
Then by construction of $R_h$ we have $b=0$ and $D^h_{0,i}b(x)=0$ on $\{x_i=0\}$. Furthermore, $b$ is supported in $\left[-h,\frac34\right]^n$ and we have  $b=0$ on $\{x_n=0\}$. We know that $D^h_{0,n}\tilde a=g_{h,n}$ on $\{x_n=0\}$. In addition, $R_h g_{h,n}=g_{h,n}$ on $[0,\infty)^{n-1}\times\{0\}$, and so $D^h_{0,n}b=g_{h,n}$ on $[0,\infty)^{n-1}\cap(h\Z)^{n-1}$ follows from the fact that $R_h$ and $D^h_{0,n}$ commute.

We next estimate $\|\nabla_h^2b\|_{L^2(\Omega^h)}=\|\nabla_h^2R_h\tilde a\|_{L^2(\Omega^h)}$. If $i,j\le n-1$ then \eqref{eq:estnabla2R} implies that
\[\|D^h_iD^h_{-j}R_h\tilde a\|_{L^2(\Omega^h)}\le C\nabla_h^2\tilde a\|_{L^2_h((h\Z)^{n-1})}.\]
When taking derivatives in the direction $e_n$ we use \eqref{eq:estnablaR} and the fact that $D^h_{\pm n}$ and $R_h$ commute, to obtain (for $i<n$) that
\begin{align*}
\|D^h_iD^h_{-n}R_h\tilde a\|_{L^2(\Omega^h)}&=\|D^h_iR_hD^h_{-n}\tilde a\|_{L^2(\Omega^h)}\\
&\le C\|D^h_iD^h_{-n}\tilde a\|_{L^2(\Omega^h)},
\end{align*}
and similarly, using \eqref{eq:estR},
\[\|D^h_nD^h_{-n}R_h\tilde a\|_{L^2(\Omega^h)}\le C\|D^h_nD^h_{-n}\tilde a\|_{L^2(\Omega^h)}.\]
If we combine the last three estimates and use \eqref{eq:estnabla2b} we deduce that
\[\|\nabla_h^2b\|_{L^2(\Omega^h)}=\|\nabla_h^2\tilde a\|_{L^2(\Omega^h)}\le Ch^{s-2}\|u\|_{H^s(\Omega)}.\]
Thus we can set $b=\hat E_n$, and have shown that $\hat E_n$ has all of the desired properties.
\end{proof}
\begin{proof}[Proof of Lemma \ref{l:discretetrace2}]
The proof is quite similar to the proof of Lemma \ref{l:discretetrace}. Let us outline the differences. In Step 2 we use a different extension operator, namely
\[ a^*(x',x_n) := \sum_{k'\in\left\{-\frac1h+1,\ldots,\frac1h-1\right\}^{n-1}}  \frac{\gamma_{k'}}{{\rm e}^{|k'|h}}x_n\,{\rm e}^{-|k'|x_n}\,{\rm e}^{i\pi k'\cdot x'},\]
so that $a^*(x',0)=0$ and $D^h_{-n}a(x',0)=g^*_{h,n}(x')$ for $x'\in(-1,1)^{n-1}\cap (h\Z)^{n-1}$.
Using Lemma \ref{l:estboundaryvalues} we then again obtain
\[\|\nabla_h^2a^*\|_{L^2_h((-1,1)^{n-1}\times[0,2]\cap(h\Z)^n)}\le C[g_{h,n}]_{H^{\frac{1}{2}}_h(\R^{n-1})}\le Ch^{s-2}\|u\|_{H^s(\Omega)}\]
for $s\le3$. The localization step remains unchanged. To correct the boundary values we use
\[R_h^*u_h(x'):=\begin{cases}Ru(x')&\mbox{for $x'\in[0,\infty)^{n-1}$},\\
0&\text{otherwise},\end{cases}\]
instead of $R_h$. By using this projection operator we can then proceed as before.
\end{proof}

\section{Summation-by-parts formulae and Poincar\'{e} inequalities}\label{s:summbyparts}
For the sake of completeness we record some summation-by-parts formulae that we will use in the following. These formulae are adapted to the two boundary conditions that we encounter in \eqref{eq:bilaplace_findiff} and \eqref{eq:bilaplace_findiff2}.
Zero boundary conditions are easier to deal with, so we begin with those.

\begin{lemma}\label{l:summbyparts1}
Let $v,\varphi\colon\tilde\Omega^h\to\R$, and assume that $\varphi  = D^h_\nu \varphi = 0 $ on $\Gamma^h$.

	We have that
\begin{equation}\label{eq:summbyparts1_1}
\sum_{z\in\Omega^h\cup\Gamma^h}h^n\Delta_h^2v(z)\varphi(z)= \sum_{i,j=1}^n\sum_{z\in\Omega^h\cup\Gamma^h}h^nD^h_iD^h_{-j}v(z)D^h_iD^h_{-j}\varphi(z).
\end{equation}
So, if we define the scalar product $(f,g)_{L^2_{h,*}(\Omega^h\cup\Gamma^h)}$ on functions $f,g\colon\tilde\Omega^h\to\R^{n\times n}$ by
\[(f,g)_{L^2_{h,*}(\Omega^h\cup\Gamma^h)}:=\sum_{z\in\Omega^h}  \sum_{i,j=1}^n h^nf_{i,j}(z)g_{i,j}(z),\]
we have
\begin{equation}\label{eq:summbyparts1_2}
(\Delta_h^2v,\varphi)_{L^2_h(\Omega^h\cup\Gamma^h)}=(\nabla_h^2v,\nabla_h^2\varphi)_{L^2_{h,*}(\Omega^h\cup\Gamma^h)}.
\end{equation}
Furthermore, we have, for any $i\in\{1,\ldots,n\}$, that
\begin{equation}
(D^h_iD^h_{-i}v,\varphi)_{L^2_h(\Omega^h\cup\Gamma^h)}=(v,D^h_iD^h_{-i}\varphi)_{L^2_h(\Omega^h\cup\Gamma^h)}.\label{eq:summbyparts1_3}
\end{equation}

\end{lemma}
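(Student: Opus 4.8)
The plan is to prove the three identities by repeated application of the one-dimensional summation-by-parts formula, carefully tracking the boundary terms and using the hypotheses $\varphi = D^h_\nu\varphi = 0$ on $\Gamma^h$. The cleanest way to organize this is to first establish \eqref{eq:summbyparts1_3}, since it is the atomic building block from which the other two follow. For a fixed $i$, I would write $(D^h_iD^h_{-i}v,\varphi)_{L^2_h(\Omega^h\cup\Gamma^h)}$ as an iterated sum: an outer sum over the $n-1$ coordinates other than $x_i$, and an inner one-dimensional sum over the $x_i$-line. For each fixed line, $D^h_iD^h_{-i}$ is the standard one-dimensional discrete second difference, and one has the discrete Green's identity $\sum_k (v_{k+1}-2v_k+v_{k-1})\varphi_k = \sum_k v_k(\varphi_{k+1}-2\varphi_k+\varphi_{k-1}) + [\text{boundary terms at the two endpoints}]$. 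The boundary terms involve $v$ and $\varphi$ evaluated at the two lattice points just outside $\Omega^h\cup\Gamma^h$ in the $e_i$-direction, together with first differences there; every such term contains a factor $\varphi(z)$ with $z\in\Gamma^h$ or a factor $D^h_\nu\varphi(z)$ with $z\in\Gamma^h$ (equivalently $\varphi$ evaluated just outside, which is $-h D^h_\nu\varphi$ at the boundary node by the footnote convention), hence vanishes. Summing over the outer coordinates gives \eqref{eq:summbyparts1_3}.

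Next I would derive \eqref{eq:summbyparts1_1}. Writing $\Delta_h^2 = \sum_{i=1}^n D^h_iD^h_{-i} \circ \Delta_h$ and applying \eqref{eq:summbyparts1_3} once moves one factor $D^h_iD^h_{-i}$ off $\Delta_h v$ and onto $\varphi$, at the cost of the hypotheses on $\varphi$ only — but here there is a subtlety: \eqref{eq:summbyparts1_3} as stated requires the hypotheses on the function it is applied against, so after the first integration by parts one is pairing $\Delta_h v$ with $D^h_iD^h_{-i}\varphi$, and to integrate by parts a second time one needs $D^h_iD^h_{-i}\varphi$ (or rather $\Delta_h v$, depending on which factor is moved) to have suitable vanishing — so the correct route is to symmetrize. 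Concretely, I would first move the full operator $\Delta_h$ from $\Delta_h^2 v = \Delta_h(\Delta_h v)$: since $\Delta_h = \sum_i D^h_iD^h_{-i}$, applying \eqref{eq:summbyparts1_3} termwise gives $(\Delta_h^2 v,\varphi) = (\Delta_h v, \Delta_h\varphi)$, valid because $\varphi$ satisfies the boundary hypotheses. Then I would expand $\Delta_h v = \sum_i D^h_iD^h_{-i}v$ and $\Delta_h\varphi = \sum_j D^h_jD^h_{-j}\varphi$ and integrate by parts in each summand to convert $\sum_{i,j}(D^h_iD^h_{-i}v, D^h_jD^h_{-j}\varphi)$ into $\sum_{i,j}(D^h_iD^h_{-j}v, D^h_iD^h_{-j}\varphi)$; this last manipulation is the purely algebraic identity $(D^h_iD^h_{-i}v, D^h_jD^h_{-j}\varphi) = (D^h_iD^h_{-j}v, D^h_iD^h_{-j}\varphi)$ obtained by moving a $D^h_{-i}$ off the first slot and a $D^h_{-j}$ off — again producing only boundary terms that vanish because $\varphi$ and its normal difference vanish on $\Gamma^h$ and the whole discrete Hessian of $\varphi$ is then under control near the boundary.

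Finally, \eqref{eq:summbyparts1_2} is just a restatement of \eqref{eq:summbyparts1_1} in the notation of the inner product $(\cdot,\cdot)_{L^2_{h,*}}$, so nothing new is needed. The main obstacle, and the part I would be most careful about, is the bookkeeping of which lattice points occur in the boundary terms of the one-dimensional summation by parts and verifying that each boundary contribution genuinely carries a factor that is killed by the hypotheses — in particular near the singular points (edges, corners) of $\Gamma^h$, where several normal directions coexist and the footnote convention ``$\varphi = 0$ at all points of $\tilde\Omega^h\setminus(\Omega^h\cup\Gamma^h)$ within distance $h$ of a singular point'' must be invoked so that no stray uncontrolled term survives. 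I expect that once the one-dimensional identity is stated precisely with its endpoint terms, the vanishing is immediate on interior faces and requires only this convention at the lower-dimensional strata, so the proof is short but demands attention to the geometry of $\tilde\Omega^h$.
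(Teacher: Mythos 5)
Your proposal is correct in spirit and would work, but it is a genuinely different organization from the paper's and is heavier. The paper avoids \emph{all} boundary-term bookkeeping via a single trick: first it records the purely global identity $\sum_{z\in(h\Z)^n}D^h_{\pm i}f(z)\,g(z)=\sum_{z\in(h\Z)^n}f(z)\,D^h_{\mp i}g(z)$ for functions on the full lattice with at least one of compact support (no boundary terms arise because the sums are over all of $(h\Z)^n$); then it observes that none of the sums in \eqref{eq:summbyparts1_1} or \eqref{eq:summbyparts1_3} depends on $v$ or $\varphi$ outside $\tilde\Omega^h$, so one may extend both by zero to $(h\Z)^n$ and prove the equivalent global identities $\sum_z\Delta_h^2v\,\varphi=\sum_z\Delta_hv\,\Delta_h\varphi=\sum_{i,j}\sum_zD^h_iD^h_{-j}v\,D^h_iD^h_{-j}\varphi$ and $\sum_zD^h_iD^h_{-i}v\,\varphi=\sum_z v\,D^h_iD^h_{-i}\varphi$ by repeated application of the global identity. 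The hypotheses $\varphi=D^h_\nu\varphi=0$ on $\Gamma^h$ enter only to ensure that the extension by zero does not disturb the terms of \eqref{eq:summbyparts1_1}. You instead keep the sums over $\Omega^h\cup\Gamma^h$ and propose to track the endpoint contributions of a finite one-dimensional Green's identity line by line; this is valid but buys nothing over the paper's approach, which reduces the whole lemma to a telescoping computation with no boundary geometry at all. In addition, the paper derives \eqref{eq:summbyparts1_1} directly from the global identity, not from \eqref{eq:summbyparts1_3} as you do; the two identities are proved in parallel rather than one from the other.

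One point in your sketch needs tightening. In passing from $(\Delta_hv,\Delta_h\varphi)$ to $\sum_{i,j}(D^h_iD^h_{-j}v,D^h_iD^h_{-j}\varphi)$ you must sum by parts on the cross terms $(D^h_iD^h_{-i}v,D^h_jD^h_{-j}\varphi)$ with $i\neq j$, and the endpoint contributions involve $D^h_jD^h_{-j}\varphi$ evaluated at points $z\in\Gamma^h$ with $z_i\in\{0,1\}$. Your justification that ``the whole discrete Hessian of $\varphi$ is then under control near the boundary'' is not quite right: the normal--normal second difference $D^h_iD^h_{-i}\varphi$ need not vanish on $\{x_i=0\}$. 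What actually kills these boundary terms is that the second differences appearing there are purely \emph{tangential}: for $z$ on the face $\{x_i=0\}$ and $j\neq i$, the stencil points $z$ and $z\pm he_j$ lie on $\Gamma^h$ (or at a shell point adjacent to a singular boundary point, where $\varphi$ also vanishes by the footnote convention), and since $\varphi\equiv 0$ on $\Gamma^h$ one gets $D^h_jD^h_{-j}\varphi(z)=0$. Once this tangential-vanishing observation is made explicit and the corner/edge cases are handled as you indicate, your route does close; but you should replace the vague appeal to the whole Hessian with this precise statement.
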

\begin{proof}

Observe that we have the summation-by-parts identity
\begin{equation}\label{eq:summbyparts1_4}
\sum_{z\in (h\Z)^n}D^h_{\pm i}f(z)g(z)=\sum_{z\in (h\Z)^n}f(z)D^h_{\mp i}g(z)
\end{equation}
for $f,g\colon(h\Z)^n\to\R$ such that at least one of $f,g$ has compact support, and $i\in\{1,\ldots,n\}$ (this follows from the one-dimensional case, where it can be
easily checked). This immediately implies \eqref{eq:summbyparts1_3}.

Next, observe that none of the terms in \eqref{eq:summbyparts1_1} depends on values of $v$ or $\varphi$ outside of $\tilde\Omega^h$. Thus we can extend $v$ and $\varphi$ by 0 to all of $(h\Z)^n$ and prove equivalently that
\[\sum_{z\in(h\Z)^n}h^n\Delta_h^2v(z)\varphi(z)= \sum_{z\in(h\Z)^n}h^n\Delta_hv(z)\Delta_h\varphi(z)= \sum_{i,j=1}^n\sum_{z\in(h\Z)^n}h^nD^h_iD^h_{-j}v(z)D^h_iD^h_{-j}\varphi(z).\]
This follows from repeated application of \eqref{eq:summbyparts1_4}.
\end{proof}

For the case of the boundary conditions in \eqref{eq:bilaplace_findiff}, the situation is slightly more involved.
We define, for $i,j \in \{1,\ldots, n\}$ with $i \neq j$, the set
\[ \Gamma^h_{ij } := \{ z \in \Gamma^h : z + h A_{ij} \subset [0,1]^n \}, \]
where $A_{ij}$ is the discrete square
\[A_{ij} := \{ 0, e_i, -e_j, e_i-e_j\},\]
and note that
\[
 z \in \Gamma^h \setminus \Gamma^h_{ij} \quad \Longrightarrow  \quad (z + h A_{ij}) \cap \Omega^h = \varnothing \quad \hbox{if $i \neq j$}.
\]
\begin{lemma}\label{l:summbyparts2}
Let $v,\varphi\colon\tilde\Omega^h\to\R$, and assume that $\varphi=D^h_{0,\nu}\varphi=0$ on $\Gamma^h$.
We then have that
\begin{align}\label{eq:summbyparts2_1}
&\sum_{z\in\Omega^h\cup\Gamma^h}h^n\Delta_h^2v(z)\varphi(z)= \sum_{i,j=1}^n\sum_{z\in\Omega^h}h^nD^h_iD^h_{-j}v(z)D^h_iD^h_{-j}\tilde\varphi(z)\nonumber\\
&\qquad+\frac12\sum_{i=1}^n\sum_{z\in\Omega^h}h^nD^h_iD^h_{-i}v(z)D^h_iD^h_{-i}\tilde\varphi(z)+\sum_{i,j=1}^n\sum_{z\in\Gamma^h_{ij}}h^nD^h_iD^h_{-j}v(z)D^h_iD^h_{-j}\tilde\varphi(z).
\end{align}
So, if we define the scalar product $(f,g)_{L^2_{h,\sim}(\Omega^h\cup\Gamma^h)}$ on functions $f,g\colon\tilde\Omega^h\to\R^{n\times n}$ by
\[(f,g)_{L^2_{h,\sim}(\Omega^h\cup\Gamma^h)}:=\sum_{i,j=1}^n\sum_{z\in\Omega^h}h^nf_{ij}(z)g_{ij}(z)+\frac12\sum_{i=1}^n\sum_{z\in\Omega^h}h^nh^nf_{ii}(z)g_{ii}(z)+\sum_{\substack{i,j=1\\i\neq j}}^n\sum_{z\in\Gamma^h_{ij}}h^nf_{ij}(z)g_{ij}(z),
\]
then we have that
\begin{equation}\label{eq:summbyparts2_2}
(\Delta_h^2v,\varphi)_{L^2_h(\Omega^h\cup\Gamma^h)}=(\nabla_h^2v,\nabla_h^2\varphi)_{L^2_{h,\sim}(\Omega^h\cup\Gamma^h)}.
\end{equation}
In addition, if we also define for
$f,g\colon\tilde\Omega^h\to\R$ the scalar product
\begin{align*}
&(f,g)_{L^2_{h,\sim}(\Omega^h\cup\Gamma^h)}:=\sum_{z\in\Omega^h}h^nf_{ij}(z)g_{ij}(z)+\frac12\sum_{z\in\Omega^h}h^nf_{ii}(z)g_{ii}(z),
\end{align*}
then we have, for any $i\in\{1,\ldots,n\}$, that
\begin{equation}
(D^h_iD^h_{-i}v,\varphi)_{L^2_h(\Omega^h\cup\Gamma^h)}=(D^h_iD^h_{-i}v,\varphi)_{L^2_{h,\sim}(\Omega^h\cup\Gamma^h)}=(v,D^h_iD^h_{-i}\varphi)_{L^2_{h,\sim}(\Omega^h\cup\Gamma^h)}\label{eq:summbyparts2_3}.
\end{equation}

\end{lemma}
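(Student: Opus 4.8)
I would establish the three identities in turn, the latter two being variations on and consequences of \eqref{eq:summbyparts2_1}. The plan for \eqref{eq:summbyparts2_1} is to mimic the proof of Lemma \ref{l:summbyparts1}, i.e. to reduce everything to the elementary one-dimensional summation by parts \eqref{eq:summbyparts1_4}, but now retaining the boundary contributions that survive because $\varphi$ is only assumed to satisfy $\varphi = D^h_{0,\nu}\varphi = 0$ on $\Gamma^h$ rather than the stronger $\varphi = D^h_\nu\varphi = 0$. Since $\varphi = 0$ on $\Gamma^h$, the left-hand side equals $\sum_{z\in\Omega^h}h^n\Delta_h^2v(z)\varphi(z)$. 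Writing $\Delta_h^2v = \Delta_h(\Delta_hv)$ and applying discrete integration by parts one coordinate direction at a time, I would first move two of the difference quotients off $v$ to reach $\sum_{z\in\Omega^h}h^n\Delta_hv(z)\Delta_h\varphi(z)$ and then reorganise the product of discrete Laplacians into the sum $\sum_{i,j}\sum_{z\in\Omega^h}h^nD^h_iD^h_{-j}v(z)D^h_iD^h_{-j}\tilde\varphi(z)$ of products of discrete Hessian entries. Because these summations run over $\Omega^h$ only and not over all of $(h\Z)^n$ (as was permissible in Lemma \ref{l:summbyparts1}), each of these two manipulations leaves behind a sum of discrete boundary terms localised on $\Gamma^h$ and the adjacent layer of $\tilde\Omega^h$.

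The heart of the argument is to evaluate these boundary terms using the condition $D^h_{0,\nu}\varphi = 0$, which is equivalent to the reflection identity $\varphi(z + h\nu) = \varphi(z - h\nu)$ at points $z\in\Gamma^h$ (together with the vanishing of $\varphi$ at the ``diagonal'' exterior points attached to the singular points of $\Gamma^h$, as recorded in the footnote to \eqref{eq:bilaplace_findiff}). Substituting this relation, a second difference $D^h_iD^h_{-i}$ taken across the face of $\Gamma^h$ with outward normal $\pm e_i$ is effectively doubled --- precisely the phenomenon already encountered in the identity $D^h_iD^h_{-i}R_hu_h(x') = 2D^h_iD^h_{-i}u_h(x') + 4D^h_iD^h_{-i}u_h(x' - he_i)$ in the proof of Lemma \ref{l:discretetrace}. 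I expect that, once the sum is rebalanced against the matching factor $D^h_iD^h_{-i}v$, this doubling is exactly what produces the weight $\tfrac12$ on the diagonal ($i = j$) entries in \eqref{eq:summbyparts2_1}, while the genuinely mixed ($i\neq j$) boundary contributions survive without such a weight but only at those $z\in\Gamma^h$ for which the discrete square $z + hA_{ij}$ stays inside $[0,1]^n$, that is, at $z\in\Gamma^h_{ij}$; for $z\in\Gamma^h\setminus\Gamma^h_{ij}$ the relevant stencil does not meet $\Omega^h$, by the implication recorded just before the statement of the lemma, so the corresponding term drops out.

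Granting \eqref{eq:summbyparts2_1}, the identity \eqref{eq:summbyparts2_2} is simply its restatement, reading off the three groups of terms from the definition of the scalar product $(\cdot,\cdot)_{L^2_{h,\sim}}$. Identity \eqref{eq:summbyparts2_3} is proved by the same one-dimensional summation by parts applied to a single second difference $D^h_iD^h_{-i}$: passing $D^h_iD^h_{-i}$ from $v$ onto $\varphi$ over $\Omega^h$ leaves one boundary term on $\Gamma^h\cap\{x_i\in\{0,1\}\}$, which is again handled by the reflection identity and accounts for the factor-$\tfrac12$ term in the definition of the scalar version of $(\cdot,\cdot)_{L^2_{h,\sim}}$. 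The main obstacle I anticipate is combinatorial rather than analytical: one must enumerate the boundary layer with care, separate the contributions of the faces, edges, and lower-dimensional strata of $\Gamma^h$, and check that at every boundary-crossing point the surviving coefficients reorganise exactly into the $\tfrac12$-weighted diagonal terms and the $\Gamma^h_{ij}$-sums, with nothing left over. I would keep this manageable by exploiting the tensor-product structure of $\Omega = (0,1)^n$ and of $\Delta_h^2$, carrying out the reduction one direction at a time so that at each step the computation is genuinely one-dimensional, exactly as in the tensorised constructions used in Lemmas \ref{l:extension} and \ref{l:discretetrace}.
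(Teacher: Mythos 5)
Your route --- performing the discrete summation by parts directly over $\Omega^h$ and tracking the boundary layer by hand --- is genuinely different from the paper's, which sidesteps that bookkeeping entirely. The paper introduces $\tilde\varphi$, the extension of $\varphi|_{\Omega^h}$ by zero to $(h\Z)^n$ (this is in fact the meaning of the symbol $\tilde\varphi$ appearing in \eqref{eq:summbyparts2_1}, which your proposal uses but never defines). Since $\tilde\varphi$ vanishes on $\Gamma^h$ and outside $\Omega^h$, it satisfies the hypotheses of Lemma \ref{l:summbyparts1}, so that lemma applies verbatim to the pair $(v,\tilde\varphi)$ and yields the summation-by-parts identity with no boundary remainders at all. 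The entire content of Lemma \ref{l:summbyparts2} then reduces to a pointwise comparison of $D^h_iD^h_{-j}\tilde\varphi$ with $D^h_iD^h_{-j}\varphi$: the two agree on $\Omega^h$; on $\Gamma^h$ the reflection identity $\varphi(z+h\nu)=\varphi(z-h\nu)$ gives $D^h_iD^h_{-i}\tilde\varphi(z)=\tfrac12\,D^h_iD^h_{-i}\varphi(z)$, and for $i\neq j$ one has $D^h_iD^h_{-j}\tilde\varphi(z)=D^h_iD^h_{-j}\varphi(z)$ on $\Gamma^h_{ij}$ and $=0$ off it. This is exactly the mechanism you identify (the doubling of the second difference across a face, and the $\Gamma^h_{ij}$ restriction for the mixed entries), so your plan is aimed at the right target; what the paper's reduction buys is that one never has to enumerate faces, edges and corners, nor re-derive the summation by parts with boundary terms.

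That said, as written your proposal is a plan rather than a proof: the step you yourself flag as the ``main obstacle'' --- verifying that the surviving boundary coefficients reorganise exactly into the $\tfrac12$-weighted diagonal terms and the $\Gamma^h_{ij}$-sums with nothing left over --- is the entire content of the lemma, and it is left undone. (Be aware, when you try to match your computation to the displayed identity, that the $\tfrac12$-weighted sum in \eqref{eq:summbyparts2_1} is naturally a sum over $\Gamma^h$, not $\Omega^h$.) If you pursue the direct route, carry out the one-dimensional computation explicitly and then tensorise; alternatively, the two-line observation that $\tilde\varphi$ satisfies the hypotheses of Lemma \ref{l:summbyparts1} collapses the whole argument to the pointwise comparison above and is the cleaner way to finish. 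The same remark applies to \eqref{eq:summbyparts2_3}, which the paper obtains from \eqref{eq:summbyparts1_3} by the identical substitution.
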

\begin{proof}
Define $\tilde\varphi\colon(h\Z)^n\to\R$ as $\tilde{\varphi}(z):=\begin{cases} \varphi(z)&\mbox{for $z\in\Omega^h$}\\0&\text{otherwise}\end{cases}$. Then we can apply Lemma \ref{l:summbyparts1} to $v,\tilde\varphi$ and obtain
\[
\sum_{z\in\Omega^h\cup\Gamma^h}h^n\Delta_h^2v(z)\tilde\varphi(z)= \sum_{z\in\Omega^h\cup\Gamma^h}h^n\Delta_hv(z)\Delta_h\tilde\varphi(z)= \sum_{i,j=1}^n\sum_{z\in\Omega^h\cup\Gamma^h}h^nD^h_iD^h_{-j}v(z)D^h_iD^h_{-j}\tilde\varphi(z).\]
We trivially have
\[\sum_{z\in\Omega^h\cup\Gamma^h}h^n\Delta_h^2v(z)\tilde\varphi(z) =\sum_{z\in\Omega^h\cup\Gamma^h}h^n\Delta_h^2v(z)\varphi(z).\]
Furthermore, $D^h_iD^h_{-j}\tilde\varphi(z)$ is equal to $D^h_iD^h_{-j}\varphi(z)$ if $z\in\Omega^h$. If $z\in\Gamma^h$ we have $D^h_iD^h_{-i}\tilde\varphi(z)=\frac12D^h_iD^h_{-i}\varphi(z)$ and $D^h_iD^h_{-j}\tilde\varphi(z)=\begin{cases} D^h_iD^h_{-j}\varphi(z) &\mbox{for $z\in\Gamma^h_{ij}$}\\0 &\text{otherwise}\end{cases}$. Therefore,
\begin{align*}
&\sum_{i,j=1}^n\sum_{z\in\Omega^h\cup\Gamma^h}h^nD^h_iD^h_{-j}v(z)D^h_iD^h_{-j}\tilde\varphi(z)=\sum_{i,j=1}^n\sum_{z\in\Omega^h}h^nD^h_iD^h_{-j}v(z)D^h_iD^h_{-j}\tilde\varphi(z)\\
&\quad+\frac12\sum_{i=1}^n\sum_{z\in\Omega^h}h^nD^h_iD^h_{-i}v(z)D^h_iD^h_{-i}\tilde\varphi(z)+\sum_{i,j=1}^n\sum_{z\in\Gamma^h_{ij}}h^nD^h_iD^h_{-j}v(z)D^h_iD^h_{-j}\tilde\varphi(z).
\end{align*}
By combining the last four displayed equalities we deduce \eqref{eq:summbyparts2_1}. With a similar argument we can obtain \eqref{eq:summbyparts2_3} from \eqref{eq:summbyparts1_3}.
\end{proof}

Next, we state Poincar\'{e}-type inequalities for the two sets of boundary conditions considered.
\begin{lemma}\label{l:Poincare1}
Let $v\colon\tilde\Omega^h\to\R$, and suppose that $\varphi  = D^h_\nu \varphi = 0 $ on $\Gamma^h$. Then,
\begin{equation}\label{eq:Poincare1_1}
\|v\|_{H^2_h(\Omega^h)}\le C\|\nabla_h^2v\|_{L^2_{h,*}(\Omega^h)}.\end{equation}
\end{lemma}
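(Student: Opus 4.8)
The plan is to establish the discrete Poincaré inequality $\|v\|_{L^2_h(\Omega^h)} + \|\nabla_h v\|_{L^2_h(\Omega^h)} \le C\|\nabla_h^2 v\|_{L^2_{h,*}(\Omega^h)}$ by a one-dimensional slicing argument, since all three terms in $\|v\|_{H^2_h(\Omega^h)}$ can be controlled once the $L^2_h$ and $\nabla_h$ terms are. First I would observe that, because $\varphi = D^h_\nu\varphi = 0$ on $\Gamma^h$ (and here we should really be assuming the hypothesis on $v$ rather than $\varphi$ — i.e. $v = D^h_\nu v = 0$ on $\Gamma^h$), the condition is equivalent to $v$ vanishing at all lattice points of $\tilde\Omega^h \setminus \Omega^h$; thus we may extend $v$ by zero to all of $(h\Z)^n$ and work with a compactly supported lattice function that vanishes on the outer layer.

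The core estimate is the following elementary one-dimensional fact: if $w\colon h\Z \to \R$ has compact support and $w$ vanishes at the two grid points flanking an interval $[0,1]$ (more precisely $w(0) = w(-h) = 0$ and $w$ supported in $[-h, 1+h]$), then $\|w\|_{L^2_h} \le C \|D^h_+ w\|_{L^2_h}$ and $\|D^h_+ w\|_{L^2_h} \le C\|D^h_+ D^h_- w\|_{L^2_h}$, with $C$ independent of $h$. The first is the standard discrete Poincaré inequality proved by writing $w(x) = \sum_{y < x} h\, D^h_+ w(y)$ and applying Cauchy–Schwarz; the second follows from the first applied to $D^h_+ w$ once one checks that $D^h_+ w$ also satisfies a homogeneous boundary condition at one endpoint (which it does, since $w \equiv 0$ to the left of the domain). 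I would then apply these slice-by-slice in the $e_n$-direction: for fixed $x' \in (h\Z)^{n-1}$, the function $x_n \mapsto v(x', x_n)$ is compactly supported and vanishes on the boundary layer, so
\begin{equation*}
\sum_{x_n} h\, v(x',x_n)^2 \le C \sum_{x_n} h\, (D^h_n v(x',x_n))^2 \le C \sum_{x_n} h\, (D^h_n D^h_{-n} v(x',x_n))^2,
\end{equation*}
and multiplying by $h^{n-1}$ and summing over $x'$ gives $\|v\|_{L^2_h(\Omega^h)} \le C\|D^h_n D^h_{-n} v\|_{L^2_h(\Omega^h)} \le C\|\nabla_h^2 v\|_{L^2_{h,*}(\Omega^h)}$. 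The same slicing argument applied to $D^h_i v$ (which vanishes on an appropriate boundary layer because $v$ does) yields $\|\nabla_h v\|_{L^2_h(\Omega^h)} \le C\|\nabla_h^2 v\|_{L^2_{h,*}(\Omega^h)}$, and combining these with the trivial bound $\|\nabla_h^2 v\|_{L^2_h(\Omega^h)} \le \|\nabla_h^2 v\|_{L^2_{h,*}(\Omega^h)}$ (the starred norm dominates the plain sum of squared second differences over $\Omega^h$) gives \eqref{eq:Poincare1_1}.

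The only mildly delicate point — and the one I would treat carefully — is bookkeeping at the boundary: I must make sure that when I extend $v$ by zero and take the difference quotient $D^h_i v$, the resulting function still has a full layer of zeros on the relevant side so that the one-dimensional Poincaré inequality applies to it, and that no lattice points near the corners of $\Omega$ are double-counted or omitted when I pass between the sums defining $\|\cdot\|_{H^2_h(\Omega^h)}$ and the slice sums. Because the hypothesis $v = D^h_\nu v = 0$ on $\Gamma^h$ forces $v$ to vanish not just on $\Gamma^h$ but on the whole outer shell $\tilde\Omega^h \setminus (\Omega^h \cup \Gamma^h)$ of boundary-adjacent points (as noted in the footnote to \eqref{eq:bilaplace_findiff}), this shell provides exactly the cushion of zeros needed, so the argument closes without loss of constants depending on $h$.
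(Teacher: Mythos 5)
Your proposal is correct and takes essentially the same route as the paper: both extend $v$ by zero to all of $(h\Z)^n$ using the fact that the boundary conditions force $v$ to vanish on $\tilde\Omega^h\setminus\Omega^h$, and then invoke the one-dimensional discrete Poincaré inequality for compactly supported lattice functions twice (once for $v$, once for $\nabla_h v$) via slicing. You also correctly flag the typo in the statement where $\varphi$ should read $v$.
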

\begin{lemma}\label{l:Poincare2}
Let $v\colon\tilde\Omega^h\to\R$, and suppose that $\varphi  = D^h_{0,\nu} \varphi = 0 $ on $\Gamma^h$. Then,
\begin{equation}\label{eq:Poincare2_1}
\|v\|_{H^2_h(\Omega^h)}\le C\|\nabla_h^2v\|_{L^2_{h,\sim}(\Omega^h)}.\end{equation}
\end{lemma}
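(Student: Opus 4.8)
The plan is to deduce the estimate from the easier Lemma~\ref{l:Poincare1} by modifying $v$ on the ``ghost layer'' $\tilde\Omega^h\setminus(\Omega^h\cup\Gamma^h)$, after first discarding the extra weights and boundary terms built into the norm $\|\cdot\|_{L^2_{h,\sim}}$. Since the sums $\sum_{z\in\Gamma^h_{ij}}h^n f_{ij}(z)^2$ are nonnegative and the diagonal part of the inner product $(\cdot,\cdot)_{L^2_{h,\sim}}$ carries only half the weight of the corresponding part of $(\cdot,\cdot)_{L^2_{h,*}}$, one has $\|f\|_{L^2_{h,\sim}(\Omega^h)}^2\ge\frac12\|f\|_{L^2_{h,*}(\Omega^h)}^2$ for every $f\colon\tilde\Omega^h\to\R^{n\times n}$. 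Hence it is enough to prove $\|v\|_{H^2_h(\Omega^h)}\le C\|\nabla_h^2 v\|_{L^2_{h,*}(\Omega^h)}$.

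To this end, let $\bar v\colon\tilde\Omega^h\to\R$ agree with $v$ on $\Omega^h\cup\Gamma^h$ and vanish on $\tilde\Omega^h\setminus(\Omega^h\cup\Gamma^h)$. Then $\bar v=D^h_\nu\bar v=0$ on $\Gamma^h$, so Lemma~\ref{l:Poincare1} gives $\|\bar v\|_{H^2_h(\Omega^h)}\le C\|\nabla_h^2\bar v\|_{L^2_{h,*}(\Omega^h)}$. Two comparisons then remain. First, any discrete Hessian stencil $\{z,\,z+he_i,\,z-he_j,\,z+he_i-he_j\}$ centred at a point $z\in\Omega^h=(0,1)^n\cap(h\Z)^n$ is contained in $[0,1]^n\cap(h\Z)^n=\Omega^h\cup\Gamma^h$, where $\bar v=v$; hence $\nabla_h^2\bar v=\nabla_h^2v$ on $\Omega^h$ and $\|\nabla_h^2\bar v\|_{L^2_{h,*}(\Omega^h)}=\|\nabla_h^2v\|_{L^2_{h,*}(\Omega^h)}$. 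Second, one has $\|v\|_{H^2_h(\Omega^h)}\le C\|\bar v\|_{H^2_h(\Omega^h)}$: the boundary conditions $v=0$ and $D^h_{0,\nu}v=0$ on $\Gamma^h$ pin down every value of $v$ on $\tilde\Omega^h\setminus(\Omega^h\cup\Gamma^h)$ that enters the norm — such a value equals either $0$ (when the nearest point of $\Gamma^h$ is a singular point, by the convention recalled in the footnote to \eqref{eq:bilaplace_findiff}) or the value of $v$ at the mirror image of that ghost point in $\Omega^h$ (reflection across the adjacent face, coming from $D^h_{0,\nu}v=0$). Consequently every first or second difference quotient occurring in $\|v\|_{H^2_h(\Omega^h)}$ whose stencil reaches the ghost layer equals $\pm$ a difference quotient of $v$ on a stencil contained in $\Omega^h\cup\Gamma^h$, hence is already accounted for, up to a bounded multiplicity, by the corresponding terms of $\|\bar v\|_{H^2_h(\Omega^h)}$. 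Combining, $\|v\|_{H^2_h(\Omega^h)}\le C\|\bar v\|_{H^2_h(\Omega^h)}\le C\|\nabla_h^2\bar v\|_{L^2_{h,*}(\Omega^h)}=C\|\nabla_h^2v\|_{L^2_{h,*}(\Omega^h)}\le C\|\nabla_h^2v\|_{L^2_{h,\sim}(\Omega^h)}$, as required.

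For completeness one should recall how Lemma~\ref{l:Poincare1} itself is obtained, namely by the classical slicing argument. The one-dimensional building block is: if $w$ is defined on $\{0,h,\dots,1\}$ and vanishes at both endpoints, then $\sum_j h\,|w(jh)|^2\le C\sum_j h\,|D^hw(jh)|^2$, and moreover, since the telescoping sum of the interior first differences of $w$ vanishes, $\sum_j h\,|D^hw(jh)|^2\le C\sum_j h\,|D^hD^{-h}w(jh)|^2$ (Cauchy--Schwarz bounds $\max_j|D^hw(jh)|$ in terms of this mean-zero property and the $\ell^1$-norm of the second differences). One applies the first inequality along coordinate lines to control $\|v\|_{L^2_h}$ by $\|\nabla_hv\|_{L^2_h}$; one applies it to $D^h_iv$ along a line in a direction $e_j$ with $j\ne i$ (which exists since $n\ge2$), using that $D^h_iv$ vanishes on the faces $\{x_j=0\}$ and $\{x_j=1\}$ because $v$ does, to control $\|D^h_iv\|_{L^2_h}$ by $\|D^h_jD^h_iv\|_{L^2_h}$, which is comparable to $\|\nabla_h^2v\|_{L^2_h}$ after an index shift; the diagonal entries $D^h_iD^h_{-i}v$ are themselves part of $\|\nabla_h^2v\|$. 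When $v=D^h_\nu v=0$ on $\Gamma^h$ as in Lemma~\ref{l:Poincare1}, $v$ vanishes on all of $\tilde\Omega^h\setminus\Omega^h$ and there is nothing to check about the ghost layer.

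The main obstacle is bookkeeping rather than ideas: one must keep careful track of the index sets over which the various sums defining $\|\cdot\|_{H^2_h}$, $\|\cdot\|_{L^2_{h,*}}$ and $\|\cdot\|_{L^2_{h,\sim}}$ run — especially near the edges and corners of $\Omega$ and on the ghost layer $\tilde\Omega^h\setminus(\Omega^h\cup\Gamma^h)$ — when checking that every ghost-layer stencil reduces to an interior one and when matching summation domains in the passage from $D^h_iv$ to $D^h_jD^h_iv$.
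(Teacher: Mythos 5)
Your proof is correct and follows essentially the same route as the paper: set the function to zero on the ghost layer $\tilde\Omega^h\setminus(\Omega^h\cup\Gamma^h)$, apply Lemma~\ref{l:Poincare1} to the truncation, and compare the norms on both sides (noting that interior Hessian stencils never leave $\Omega^h\cup\Gamma^h$ and that the $\sim$-norm dominates the $*$-norm); the paper compresses these comparisons into ``it is easy to check.'' Only cosmetic quibbles: the diagonal part of $(\cdot,\cdot)_{L^2_{h,\sim}}$ actually carries weight $\tfrac32$ rather than $\tfrac12$ (your inequality still holds, even with constant $1$), and boundary terms like $D^h_nD^h_{-n}v(x',0)=\tfrac{2}{h^2}v(x',h)$ are a bounded multiple of a term of $\|\bar v\|_{H^2_h}$ rather than literally $\pm$ an interior difference quotient of $v$.
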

\begin{proof}[Proof of Lemma \ref{l:Poincare1}]
We can extend $v$ by 0 to $(h\Z)^n$ without changing the statement of the lemma. Now observe that for $f\colon(h\Z)^n \rightarrow \R$ with support
contained in a cube of side-length $L$, and $i\in\{1,\ldots,n\}$, we have the Poincar\'{e} inequality
\[\|f\|_{L^2_h((h\Z)^n)}\le CL\|D^h_{\pm i}f\|_{L^2_h((h\Z)^n)}.\]
Indeed this follows from the one-dimensional case, which can be proved by a straightforward summation by parts.
If we apply this inequality to $v$ and $\nabla v$, we easily deduce \eqref{eq:Poincare1_1}.
\end{proof}

\begin{proof}[Proof of Lemma \ref{l:Poincare2}]
Let $\tilde{v}(z):=\begin{cases} v(z)&\mbox{for $z\in\Omega^h$}\\0&\text{otherwise}\end{cases}$. Then, $\tilde v$ satisfies the assumptions of Lemma \ref{l:Poincare1}, so that
\[\|\tilde v\|_{H^2_h(\Omega^h)}\le C\|\nabla_h^2\tilde v\|_{L^2_{h,*}(\Omega^h)}.\]
Furthermore it is easy to check that \[\|\nabla_h^2\tilde v\|_{L^2_{h,*}(\Omega^h)}\le\|\nabla_h^2v\|_{L^2_{h,\sim}(\Omega^h)}\]
and
\[\|v\|_{H^2_h(\Omega^h)}\le 2\|\tilde v\|_{H^2_h(\Omega^h)},\]
and hence we directly deduce \eqref{eq:Poincare2_1}.
\end{proof}

\section{Proofs of the main theorems}\label{s:proofmainthm}
We have already sketched the proof of Theorem \ref{t:mainthm} in the Introduction. We now provide additional details.
\begin{proof}[Proof of Theorem \ref{t:mainthm}]

As was mentioned at the start of Section \ref{s:extension}, we can assume that $u$ is supported in $\left[0,\frac23\right)^2$.
Let $E\colon\tilde\Omega^h\to\R$ be defined by $E:=u-U$. Then,
\begin{alignat*}{2}
		E&=0 \qquad&& \text{on }\Gamma^h,\\
		D^h_{0,\nu} E&=D^h_{0,\nu} \tilde{u}\qquad&& \text{on }\Gamma^h.
\end{alignat*}
Let $\hat E$ be the function from Lemma \ref{l:discretetrace}. Then,
\[\begin{array}{rl}
		E-\hat E=0 & \text{on }\Gamma^h,\\
		D^h_{0,\nu} (E-\hat E)=0& \text{on }\Gamma^h.
\end{array}\]
Therefore, using the results from Section \ref{s:summbyparts} we deduce that
\begin{align}
\|\nabla_h^2(E-\hat E)\|^2_{L^2_{h,\sim}(\Omega^h\cup\Gamma^h)}&=(\Delta_h^2(E-\hat E),E-\hat E)_{L^2_h(\Omega^h\cup\Gamma^h)}\nonumber\\
&=(\Delta_h^2E,E-\hat E)_{L^2_h(\Omega^h\cup\Gamma^h)}-(\nabla_h^2\hat E,\nabla_h^2(E-\hat E))_{L^2_{h,\sim}(\Omega^h\cup\Gamma^h)}.\label{eq:identitye}
\end{align}
We can rewrite $\Delta_h^2E$ as follows
\begin{align*}
	 \Delta_h^2E&=\Delta_h^2\tilde{u}-\Delta_h^2U=\Delta_h^2\tilde{u}-T^{2,\ldots,2}f=\Delta_h^2\tilde{u}-T^{h,2,\ldots,2}\Delta^2\tilde{u}\\
	&=\sum_{i=1}^nD^h_iD^h_{-i}\Delta_h\tilde u-T^{h,2,\ldots,2}\partial_i^2\Delta\tilde{u}\\
	&=\sum_{i=1}^nD^h_iD^h_{-i}\Delta_h\tilde u-D^h_iD^h_{-i}T^{h,2}_1\ldots T^{h,2}_{i-1} T^{h,2}_{i+1}\ldots  T^{h,2}_n \Delta\tilde{u}\\
	&=\sum_{i=1}^nD^h_iD^h_{-i}\varphi_i,
\end{align*}
where we have abbreviated
\[	\varphi_i:=\Delta_h\tilde u-T^{h,2}_1\ldots T^{h,2}_{i-1} T^{h,2}_{i+1}\ldots  T^{h,2}_n \Delta\tilde{u}.\]
If we insert this into \eqref{eq:identitye} and use the summation-by-parts formula \eqref{eq:summbyparts2_3} we arrive at
\begin{align*}
\|\nabla_h^2(E-\hat E)\|^2_{L^2_{h,\sim}(\Omega^h\cup\Gamma^h)}&=\sum_{i=1}^n(\varphi_i,D^h_iD^h_{-i}(E-\hat E))_{L^2_h(\Omega^h\cup\Gamma^h)}-(\nabla_h^2\hat E,\nabla_h^2(E-\hat E))_{L^2_{h,\sim}(\Omega^h\cup\Gamma^h)}\\
&\le \left(\sum_{i=1}^n\|\varphi_i\|_{L^2_h(\Omega^h\cup\Gamma^h)}+\|\nabla_h^2\hat E\|_{L^2_h(\Omega^h\cup\Gamma^h)}\right)\|\nabla_h^2(E-\hat E)\|_{L^2_{h,\sim}(\Omega^h\cup\Gamma^h)},
\end{align*}
and thus
\begin{equation}
	\|\nabla_h^2E\|_{L^2_{h,\sim}(\Omega^h\cup\Gamma^h)}\le \|\nabla_h^2\hat E\|_{L^2_h(\Omega^h\cup\Gamma^h)}+\sum_{i=1}^n\|\varphi_i\|_{L^2_h(\Omega^h\cup\Gamma^h)}.\label{eq:decomp_E}
\end{equation}

The first term on the right-hand side here is bounded by $Ch^{s-2}\|u\|_{H^s(\Omega)}$ by construction of $\hat E$. The summands of the sum can be bounded using the Bramble--Hilbert lemma as in the proof of \cite[Theorem 2.68]{Jovanovic2014}. Let us sketch the argument for completeness:

Recall that
\[\varphi_i(x)=\Delta_h\tilde u(x)-T^{h,2}_1\ldots T^{h,2}_{i-1} T^{h,2}_{i+1}\ldots  T^{h,2}_n \Delta\tilde{u}(x).\]
Because $s>\frac n2$, \[|\Delta_h\tilde u(x)|\le C(h)\|\tilde u\|_{L^\infty(x+(-h,h)^n)}\le C(h)\|\tilde u\|_{H^s(x+(-h,h)^n)}.\]
In addition $s>\frac 52$ implies according to \cite[Theorem 1.67]{Jovanovic2014} that
\[|T^{h,2}_1\ldots T^{h,2}_{i-1} T^{h,2}_{i+1}\ldots  T^{h,2}_n \Delta\tilde{u}(x)|\le C(h)\|\tilde u\|_{H^s(x+(-h,h)^n)}.\]
Thus $\varphi_i(x)$ is a bounded linear functional of $\tilde u\in H^s(x+(-h,h)^n)$. This functional vanishes when $\tilde u|_{x+(-h,h)^n}$ is a polynomial of degree at most 3. Indeed, then $\Delta\tilde{u}(y)$ is equal to some affine function $a(y)$, and $\Delta_h\tilde u(x)=a(x)$. On the other hand, the smoothing operators $T^{h,2}_j$ map affine functions to themselves, so that $\varphi_i(x)=0$.

To summarize $\varphi_i(x)$ is a bounded linear functional of $\tilde u\in H^s(x+(-h,h)^n)$ that vanishes on polynomials of degree at most 3.
Hence by the Bramble--Hilbert lemma it is bounded by $C(h)[\tilde u]_{H^s(x+(-h,h)^n)}$ for the range of $s$ as in the statement of the theorem.
Using a scaling argument to determine the correct prefactor of $h$, we obtain
\begin{equation}
	\|\varphi_i\|_{L^2_{h,\sim}(\Omega^h\cup\Gamma^h)}\le Ch^{s-2}[\tilde u]_{H^s(\R^n)} \le Ch^{s-2}\|u\|_{H^s(\Omega)}\label{eq:est_phii}
\end{equation}
for those $s$.

Now we substitute \eqref{eq:est_phii} into \eqref{eq:decomp_E} and obtain the bound
\[\|\nabla_h^2E\|_{L^2_{h,\sim}(\Omega^h\cup\Gamma^h)}\le Ch^{s-2}\|u\|_{H^s(\Omega)}\]
for the range of $s$ as in the statement of the theorem. The discrete Poincar\'e inequality, Lemma \ref{l:Poincare2}, immediately implies the asserted error bound.
\end{proof}

\begin{proof}[Proof of Theorem \ref{t:mainthm2}]
The proof is the same as that of Theorem \ref{t:mainthm}. The only differences are that we work with the inner product
$(\cdot,\cdot)_{L^2_{h,*}(\Omega^h\cup\Gamma^h)}$ instead of $(\cdot,\cdot)_{L^2_{h,\sim}(\Omega^h\cup\Gamma^h)}$, use $\hat E^*$ instead of $\hat E$,
and Lemma \ref{l:Poincare1} instead of Lemma \ref{l:Poincare2}.
\end{proof}

\begin{remark}
By Section \ref{s:discretetrace} we know that there are extensions of the boundary values of $\tilde u$ with controlled $\|\cdot\|_\sim$-norm and $\|\cdot\|_*$-norm respectively. In fact, the optimal such extension is
in both cases the biharmonic extension of the boundary values, i.e., the unique function $V$ with the given boundary values that satisfies $\Delta_h^2V=0$ in $\Omega$. Indeed, if $\psi$ is a function such that $\psi = 0, \, D^h_{0,\nu} \psi = 0 \text{ on }\Gamma$, then
\[\| \nabla_h^2 (V+\psi)\|^2_{L^2_{h,\sim}(\Omega^h\cup\Gamma^h)}=\| \nabla_h^2 V\|^2_{L^2_{h,\sim}(\Omega^h\cup\Gamma^h)}+\| \nabla_h^2\psi\|^2_{L^2_{h,\sim}(\Omega^h\cup\Gamma^h)}+2(\nabla_h^2V,\nabla_h^2\psi)_{L^2_{h,\sim}(\Omega^h\cup\Gamma^h)}\]
and $(\nabla_h^2V,\nabla_h^2\psi)_{L^2_{h,\sim}(\Omega^h\cup\Gamma^h)}= 0$, and similarly for $\|\cdot\|_{L^2_{h,*}(\Omega^h\cup\Gamma^h)}$.
This means that we could assume $\hat E$ to be discretely biharmonic, and this would simplify the proof of Theorem \ref{t:mainthm} slightly.
However, for more general fourth-order elliptic operators one cannot use this fact, so we chose to avoid it here.
\end{remark}
\begin{remark}
Using function space interpolation as in Lemma \ref{l:interpolation} it is possible to deduce the intermediate cases of Theorem \ref{t:mainthm} and \ref{t:mainthm2} from the
borderline cases $s=4$ (or $s=3$) and $s=\frac52+\varepsilon$. Our method of proof for $s=4$ (or $s=3$) however directly yields the desired bounds for all
relevant $s$, without the need to resort to function space interpolation.
\end{remark}

\appendix
\section{Density results}\label{a:density}
This section is concerned with the various definitions of the space $H^s_{(k)}$ in the Introduction. Let us recall what we want to prove.
\begin{lemma}\label{l:density}
Let Let $\Xi=I_1\times\cdots\times I_n$, where $I_j\subset\R$ are (possibly unbounded) open intervals, $s\in\R$, $s\ge0$, and $k\in\N_0$
such that $k+\frac12<s$. Then, the following spaces are equal:
\begin{itemize}
	\item[i)] $H^s_{(k)}(\Xi)$, the space of all $u\in H^s(\Xi)$ such that the traces of $\partial_\nu^iu$ for $0\le i\le k$ vanish on $\partial\Xi$;
	\item[ii)] $\overline{\left\{u\in C^\infty(\overline\Xi)\colon\partial_\nu^i u=0\text{ on }\partial\Xi\ \forall i\le k\right\}}^{\|\cdot\|_{H^s(\Xi)}}$, the closure in the $H^s(\Xi)$-norm of the set of all functions in $C^\infty(\overline\Xi)$ whose derivatives up to order $k$ vanish on $\partial\Xi$;
	\item[iii)] $H^s(\Xi)\cap H^{\min(k+1,s)}_0(\Xi)$.
\end{itemize}
\end{lemma}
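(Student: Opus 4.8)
The plan is to prove the three spaces coincide by establishing the cyclic chain of inclusions $\text{ii)}\subseteq\text{i)}\subseteq\text{ii)}\subseteq\text{iii)}\subseteq\text{i)}$; everything except the density statement $\text{i)}\subseteq\text{ii)}$ is comparatively soft, so I would isolate that as the core. Two preliminary reductions: first, multiplying by cutoffs in the unbounded coordinate directions that are identically $1$ near the corresponding faces of $\partial\Xi$ preserves all three conditions and the $H^s(\Xi)$-topology, so I may assume $\Xi$ is a bounded box and, after rescaling, the unit cube; second, multiplying by a smooth partition of unity adapted to the $2^n$ corners (as at the beginning of Section~\ref{s:extension}), I may assume the function under consideration is supported near one corner, say in $[0,\tfrac23)^n$, which restricts all extensions and corrections below to the hyperplanes $\{x_i=0\}$ only, the faces $\{x_i=1\}$ playing no role.

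The inclusion $\text{ii)}\subseteq\text{i)}$ is immediate: since $k+\tfrac12<s$, each $\partial_\nu^j$ with $0\le j\le k$ is bounded from $H^s(\Xi)$ into $H^{s-j-\frac12}$ of every face, so an $H^s$-limit of functions with vanishing normal traces up to order $k$ retains this property. For the remaining inclusions put $m:=\min(k+1,s)$, and note that $m-\tfrac12\notin\Z$ with $\lfloor m-\tfrac12\rfloor=k$ (if $s\ge k+1$ then $m=k+1$, $m-\tfrac12=k+\tfrac12$; if $s<k+1$ then $m=s\in(k+\tfrac12,k+1)$, so $m-\tfrac12\in(k,k+\tfrac12)$). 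Hence the same trace bound on $H^m(\Xi)$, together with the density of $C_c^\infty(\Xi)$ in $H^m_0(\Xi)$, shows that any $u\in H^s(\Xi)\cap H^m_0(\Xi)$ has $\partial_\nu^j u=0$ for $j\le k$, i.e. lies in $H^s_{(k)}(\Xi)$: this is $\text{iii)}\subseteq\text{i)}$. For $\text{ii)}\subseteq\text{iii)}$: a generator $\phi$ of ii) is smooth on $\overline\Xi$ (hence in $H^s(\Xi)$) with $\partial_\nu^j\phi=0$ on $\partial\Xi$ for $j\le k$, so it vanishes to order $k+1$ at each face; its extension by zero therefore lies in $H^{k+\frac32-\delta}(\R^n)$ for every $\delta>0$, so in $H^m(\R^n)$ since $m<k+\tfrac32$, and is supported in $\overline\Xi$, whence dilating about the centre of the cube and mollifying gives $\phi\in H^m_0(\Xi)$. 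As $H^s(\Xi)\cap H^m_0(\Xi)$ is the preimage of the closed subspace $H^m_0(\Xi)\subseteq H^m(\Xi)$ under the continuous inclusion $H^s(\Xi)\hookrightarrow H^m(\Xi)$, it is closed in $H^s(\Xi)$, and the closure ii) is contained in it.

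It remains to prove $\text{i)}\subseteq\text{ii)}$. Given $u\in H^s_{(k)}(\Xi)$ supported near the corner $0$, I would extend it to $\tilde u\in H^s(\R^n)$ supported in $(-1,\tfrac34)^n$ — concretely by the tensorized reflection of Lemma~\ref{l:extension}, with enough reflection coefficients to match normal derivatives across each $\{x_i=0\}$ up to order $\lceil s\rceil-1$, the matching for orders $\le k$ being automatic since those traces vanish; interpolation between integer exponents then gives $\|\tilde u\|_{H^s(\R^n)}\le C\|u\|_{H^s(\Xi)}$. (Note the trace of $\partial_i^j\tilde u$ on $\{x_i=0\}$ equals that of $\partial_i^j u$ from inside $\Xi$, hence vanishes for $j\le k$, for any extension.) Mollifying, $v_\epsilon:=\tilde u*\rho_\epsilon\in C_c^\infty(\R^n)$ converges to $\tilde u$ in $H^s(\R^n)$, so $v_\epsilon|_\Xi\to u$ in $H^s(\Xi)$; moreover $v_\epsilon|_\Xi$ is smooth on $\overline\Xi$ and, for small $\epsilon$, vanishes near the faces $\{x_i=1\}$, while its normal traces on $\{x_i=0\}$, though not zero, tend to $0$: the traces of $\partial_i^j v_\epsilon$ on $\{x_i=0\}$ converge in $H^{s-j-\frac12}$ to the vanishing traces of $\partial_i^j\tilde u$. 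I would then subtract a correction $w_\epsilon$ built from these small traces by a stratified family of inverse-trace lifts — a Nirenberg-type lift across each hyperplane $\{x_i=0\}$, adjusted along the edges and corners of the cube by inclusion–exclusion, in the spirit of the tensorized constructions in Lemma~\ref{l:extension} and Section~\ref{s:discretetrace} — designed so that $\phi_\epsilon:=v_\epsilon|_\Xi-w_\epsilon$ is smooth on $\overline\Xi$, vanishes to order $k$ on every face, and $\|w_\epsilon\|_{H^s(\Xi)}\to0$ (each piece of boundary data feeding $w_\epsilon$ tends to $0$ in the correct fractional trace norm, and the lifts are bounded into $H^s$). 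Then $\phi_\epsilon$ is a generator of ii) with $\phi_\epsilon\to u$ in $H^s(\Xi)$, which proves $\text{i)}\subseteq\text{ii)}$.

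The main obstacle is precisely this correction step: the individual hyperplane lifts interfere on the lower-dimensional faces of $\partial\Xi$, and arranging the edge- and corner-corrections to be mutually compatible while keeping everything bounded in the \emph{non-tensorial} fractional norm $H^s(\Xi)$ is the delicate part. This can be carried out directly, exploiting that $\Xi$ is a product of intervals; alternatively one may sidestep it by first settling integer $s$ — where the lifts and corrections can be produced and estimated by elementary, purely local computations on tubular neighbourhoods of the strata — and then passing to general $s$ by function-space interpolation, using the identity $\bigl[H^{m_1}_{(k)}(\Xi),H^{m_0}_{(k)}(\Xi)\bigr]_\theta=H^s_{(k)}(\Xi)$ for suitable integers $m_0<s<m_1$ together with the standard fact that a subset dense in each endpoint of an interpolation couple is dense in every intermediate space. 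Either way, the remaining verifications — that the reflection extension lies in $H^s$, that the mollified traces are small, and that the zero-extension of a smooth function vanishing to order $k+1$ lies in $H^m$ — are routine.
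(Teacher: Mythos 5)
Your overall structure — a chain of inclusions with the density statement $\text{i)}\subseteq\text{ii)}$ as the crux — matches the paper's proof, and your treatment of the soft inclusions $\text{ii)}\subseteq\text{iii)}$, $\text{iii)}\subseteq\text{i)}$ (and the redundant $\text{ii)}\subseteq\text{i)}$, which already follows from the other two) is essentially correct. Your observation that the zero-extension of a generator of ii) lies in $H^{k+\frac32-\delta}(\R^n)$ is a slightly sharper route to $\text{ii)}\subseteq\text{iii)}$ than the paper's $H^{k+1}$ argument, though both land in $H^{\min(k+1,s)}_0(\Xi)$ by dilation and mollification. The preliminary reductions (cutoff to a bounded box, corner-localisation) are harmless but not needed by the paper.

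The genuine gap is exactly where you flag it: the inclusion $\text{i)}\subseteq\text{ii)}$. Your plan — mollify a reflected extension $\tilde u$ and then subtract a correction $w_\epsilon$ built from lifted normal traces — requires constructing $w_\epsilon$ with prescribed normal traces on all $\{x_i=0\}$ simultaneously, with $H^s(\Xi)$-norm controlled by the (small) trace data. You acknowledge that the hyperplane lifts interfere on edges and corners and that reconciling them, with bounds in the non-tensorial fractional norm, is ``the delicate part'' — but you neither carry this out nor show that it can be done, and characterising the remaining work as ``routine'' undersells it: this correction \emph{is} the substance of the density theorem. Your second alternative (settle integer $s$, then interpolate via $[H^{m_1}_{(k)},H^{m_0}_{(k)}]_\theta=H^s_{(k)}$ and density of endpoint-dense sets) is reasonable in outline, but it still leaves the integer case with the same edge/corner lift problem unproven, and one also has to be careful choosing integer endpoints with $m_0>k+\tfrac12$ when $s\in(k+\tfrac12,k+1)$.

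The paper avoids all of this. For $\text{i)}\subseteq\text{ii)}$ it first notes that the generating set of ii) is precisely $C_c^\infty(\overline\Xi)\cap H^s_{(k)}(\Xi)$, reduces to showing $C^\infty(\overline\Xi)\cap H^s_{(k)}(\Xi)$ is dense in $H^s_{(k)}(\Xi)$, and then invokes the general density criterion of Burenkov \cite[Theorem 2, p.\,49]{Burenkov1998}, whose only nontrivial hypothesis in this situation is that compactly supported elements of $H^s_{(k)}(\Xi)$ are continuous under translation — a classical fact. This bypasses the explicit construction of boundary lifts, and in particular the whole problem of matching them along edges and corners, which your proposal leaves open.
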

\begin{remark}
This result actually holds in far more generality (with basically the same proof): on the one hand one can replace the condition
$\partial_\nu^iu=0$ for $0\le i\le k$ by the more general condition $\partial_\nu^iu=0$ for $i\in K$, where $K\subset\N$, as long as
$s-\frac12\notin K$. On the other hand one can take $\Xi$ to be any domain with Lipschitz boundary. The only additional difficulty then is to define $\partial_\nu^iu$ in view of the fact that $\nu$ is in general only a measurable function. However if one defines $\partial_\nu^iu$ as the appropriate linear combination of the traces of $\partial^\alpha u$ for $|\alpha|=i$ (cf. \cite[p.\,156]{Mitrea2013}) the results still hold.
\end{remark}
\begin{proof}[Proof of Lemma \ref{l:density}]
As was already remarked in Section \ref{s:Notation}, for the Lipschitz domain $\Xi$, every function in $C_c^\infty(\overline\Xi)$ is the restriction of a function in $C_c^\infty(\R^n)$ to $\Xi$. In particular, we have that $C_c^\infty(\overline\Xi)\subset H^s(\Xi)$.

We will prove the inclusions
\begin{align}
	\overline{\left\{u\in C_c^\infty(\overline\Xi)\colon\partial_\nu^i u=0\text{ on }\partial\Xi\ \forall i\le k\right\}}^{\|\cdot\|_{H^s(\Xi)}}&\subset H^s(\Xi)\cap H^{\min(k+1,s)}_0(\Xi),\label{e:density1}\\
	H^s(\Xi)\cap H^{\min(k+1,s)}_0(\Xi)&\subset H^s_{(k)}(\Xi), \label{e:density2}\\
	H^s_{(k)}(\Xi)&\subset\overline{\left\{u\in C_c^\infty(\overline\Xi)\colon\partial_\nu^i u=0\text{ on }\partial\Xi\ \forall i\le k\right\}}^{\|\cdot\|_{H^s(\Xi)}}.\label{e:density3}
\end{align}
The inclusion \eqref{e:density2} follows immediately from the definitions and standard trace theorems.

Next observe that trivially
\[\overline{\left\{u\in C_c^\infty(\overline\Xi)\colon\partial_\nu^i u=0\text{ on }\partial\Xi\ \forall i\le k\right\}}^{\|\cdot\|_{H^s(\Xi)}}\subset H^s(\Xi),\]
so in order to prove \eqref{e:density1} we only need to verify that
\[\overline{\left\{u\in C_c^\infty(\overline\Xi)\colon\partial_\nu^i u=0\text{ on }\partial\Xi\ \forall i\le k\right\}}^{\|\cdot\|_{H^s(\Xi)}}\subset H^{\min(k+1,s)}_0(\Xi).\]
To see this, it suffices to prove that
\[\overline{\left\{u\in C_c^\infty(\overline\Xi)\colon\partial_\nu^i u=0\text{ on }\partial\Xi\ \forall i\le k\right\}}^{\|\cdot\|_{{\mathbf H^{\min(k+1,s)}}(\Xi)}}\subset H^{\min(k+1,s)}_0(\Xi).\]
This follows from general theory (e.g. \cite[Theorem 3.18]{Mitrea2013}), but it is also easy to verify by direct calculations: we need to check that we
can approximate any $v\in\left\{u\in C_c^\infty(\overline \Xi)\colon\partial_\nu^i u=0\text{ on }\partial\Xi\ \forall i\le k\right\}$ with $C_c^\infty(\Xi)$-functions in the $H^{\min(k+1,s)}$-norm.

The proof of this assertion proceeds as follows.
The assumptions on $v$ imply that the extension $\bar v$ of $v$ by 0 to $\R^n$ is in $C^k(\R^n)$. In addition, $\bar v \in H^{k+1}(\R^n)$. To verify this one can use that all derivatives  of $v$ of order $k$
are continuous, have zero trace, and are in $H^1(\Xi)$. Hence, their extensions by zero belong to $H^1(\R^n)$. This is well known for general Lipschitz domains (and is easily seen by a partition of unity argument and transformation to the half-space situation by composition with a bi-Lipschitz map). Now dilation is continuous in $H^{k+1}(\R^n)$, and hence $v$ can be approximated by $H^{k+1}_0(\Xi)$ functions in the $H^{k+1}$ norm. Thus, $v \in H^{k+1}_0(\Xi)$. Consequently, $v$ can be approximated in the $\|\cdot \|_{H^{k+1}}$ norm, and in particular in the possibly weaker norm $\| \cdot \|_{H^{\min(s, k+1)}}$ by
$C_c^\infty(\overline\Xi)$ functions.

It remains to prove \eqref{e:density3}.
We first observe that
\[
\left\{u\in C_c^\infty(\overline\Xi)\colon\partial_\nu^i u=0\text{ on }\partial\Xi\ \forall i\le k\right\}=C_c^\infty(\overline\Xi)\cap H^s_{(k)}(\Xi).\]
Taking this into account, we need to verify that $C_c^\infty(\overline\Xi)\cap H^s_{(k)}(\Xi)$ is dense in $H^s_{(k)}(\Xi)$. It is easy to see that $C_c^\infty(\overline\Xi)\cap H^s_{(k)}(\Xi)$ is dense in $C^\infty(\overline\Xi)\cap H^s_{(k)}(\Xi)$, so it remains to prove that the latter space is dense in $H^s_{(k)}(\Xi)$. To see this we apply the criterion of Burenkov \cite[Theorem 2 on p.\,49]{Burenkov1998}. The first three assumptions of that theorem are obviously satisfied, and for the fourth we need to check that every $u\in H^s_{(k)}(\Xi)$ of compact support is continuous under translations, which is once again clear.
\end{proof}

\section{Remarks on Interpolation}
In this section we shall collect and discuss various results on interpolation spaces that were used in our work. As we only consider Hilbert spaces, we do not need the theory
of interpolation spaces in its full generality and can make some simplifications.

We consider two separable Hilbert spaces $X$ and $Y$ such that
$X\subset Y$ is dense and the injection is continuous. Then, given $\theta\in[0,1]$, we can consider the
associated interpolation spaces $[X,Y]_\theta:=D(\Lambda^{1-\theta})$ equipped with the graph norm, where $\Lambda$ is a self-adjoint positive operator on $Y$ with domain $X$ (see \cite[Section 2]{Lions1972} for details, starting with the nontrivial fact that such a $\Lambda$ always exists).
Because we are considering Hilbert spaces, this definition yields up to equivalence of norms
the same space as the complex interpolation space $[X,Y]_{[\theta]}$ or the real interpolation space $[X,Y]_{\theta,2}$ (see \cite[Section 14.2 and Section 15]{Lions1972} for proofs). Thus we will be able to freely use results for either of these interpolation techniques from the literature.

Our first task is to study whether the spaces $H^s_{(k)}(\Xi)$ form an interpolation scale, where $\Xi\subset\R^n$ is open and connected.
If $\Xi$ has a smooth boundary this was shown in \cite{Grisvard1967} with an alternative proof in \cite{Lofstrom1992}. However we are interested
in the cases $\Xi=(0,1)^n$ or $\Xi=(0,\infty)^n$, and the two aforementioned proofs do not easily extend to that case. On the other hand, if $\Xi$ has
Lipschitz boundary then there are results concerning the interpolation scales $H^s(\Xi)$ and $H^s_0(\Xi)$ (see e.g. \cite{Bramble1995}), but not for our mixed case.

Fortunately, in our case it is possible to use the fact that our domain is a cartesian product in combination with results from \cite{Lions1972} to give a proof of the desired result by induction on the dimension.

We begin by stating a one-dimensional but vector-valued result that we will need in the proof of the following lemmas.
In addition to the notation from the Introduction, we define $H^s_\#(I)$, where $I\subset\R$ is an open interval such that $0\in I$, as the closure of the linear space of functions $u$ contained in $C^\infty(I)\cap H^s(I)$ with $u'(0)=0$ in the $\|\cdot\|_{H^s}$-norm.

\begin{lemma}\label{l:interpolation1D}
Let $E$ be a separable Hilbert space, $I\subset\R$ a (possibly unbounded) open interval, and $k\in\N$. Let $s\ge t\ge1$, and let $\theta\in(0,1)$.
If $s-\frac12\notin\{0,1,\ldots,k\}$ and $(1-\theta)s-\frac12\notin\{0,1,\ldots, k\}$, then we have that
\begin{equation}\label{eq:interpolation1D_1}
\left[H^s_{(k)}(I,E),L^2(I,E)\right]_\theta=H^{(1-\theta)s}_{(k)}(I,E).
\end{equation}
Furthermore, if $0\in I$, $s\neq\frac32$ and $(1-\theta)s\neq\frac32$, then
\begin{equation}\label{eq:interpolation1D_2}
\left[H^s_\#(I,E),L^2(I,E)\right]_\theta=H^{(1-\theta)s} _\#(I,E).
\end{equation}
\end{lemma}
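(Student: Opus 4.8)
The plan is to prove both statements by reducing to known one-dimensional scalar interpolation results from \cite{Lions1972}, lifted to the vector-valued setting. I would first deal with \eqref{eq:interpolation1D_1}. The key observation is that for a single interval $I$ the space $H^s_{(k)}(I,E)$ is characterized by the vanishing of the boundary traces $u^{(i)}(a)=0$ for $0\le i\le k$ at each finite endpoint $a$ of $I$, and these are exactly the conditions on the generator of an interpolation scale of the type treated in \cite[Chapter 1, Section 11]{Lions1972}. Concretely, I would pick the self-adjoint positive operator $\Lambda$ on $L^2(I,E)$ associated with a suitable power of $-\mathrm{d}^2/\mathrm{d}x^2$ (with the appropriate boundary conditions making its domain $H^s_{(k)}$, possibly after a shift to guarantee positivity when $I$ is bounded); since $E$ is a separable Hilbert space, $L^2(I,E)\cong L^2(I)\otimes E$ and $\Lambda$ acts as $\Lambda_0\otimes\mathrm{id}_E$, so $D(\Lambda^{1-\theta})=D(\Lambda_0^{1-\theta})\otimes E$ and the scalar result transfers verbatim. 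The condition $s-\frac12\notin\{0,\dots,k\}$ and $(1-\theta)s-\frac12\notin\{0,\dots,k\}$ is precisely what is needed to avoid the exceptional half-integer exponents where the interpolation space would instead be the Lions--Magenes $H_{00}$-type space rather than $H^{(1-\theta)s}_{(k)}$; so one simply quotes \cite[Chapter 1, Theorems 11.6 and 11.7]{Lions1972} (or the equivalent statements there) to identify $D(\Lambda_0^{1-\theta})$ with $H^{(1-\theta)s}_{(k)}(I)$ away from those exceptional values.

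For \eqref{eq:interpolation1D_2} the argument is the same in spirit but the boundary conditions are different: one only imposes the single Neumann-type condition $u'(0)=0$ at the interior point $0\in I$ (and no condition at the endpoints of $I$, or rather only the natural ones coming from $H^s$). I would again realize $H^s_\#(I,E)$ as the domain of a power of a self-adjoint positive operator $\Lambda_0\otimes\mathrm{id}_E$, where $\Lambda_0$ encodes the transmission/Neumann condition at $0$, and invoke the corresponding one-dimensional interpolation identity from \cite{Lions1972}; the excluded values $s=\frac32$ and $(1-\theta)s=\frac32$ are again the half-integer exceptional exponents. Alternatively, one can split $I$ at $0$ into $I_-\cup\{0\}\cup I_+$, observe that $H^s_\#(I,E)$ is isomorphic to a subspace of $H^s(I_-,E)\oplus H^s(I_+,E)$ cut out by the matching conditions $u(0^-)=u(0^+)$ together with $u'(0)=0$ (the derivative jump being free), and interpolate these conditions using the standard trace characterization of interpolation spaces between $H^s$ and $L^2$ on a half-line; the condition $s\ne\frac32$ ensures that the first derivative trace is a bounded surjective map onto the correct fractional space at the relevant endpoint regularity.

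The main obstacle I anticipate is bookkeeping around the exceptional exponents and verifying that the self-adjoint operator $\Lambda_0$ with the prescribed boundary conditions genuinely has domain $H^s_{(k)}(I)$ (resp.\ $H^s_\#(I)$) for the given $s$, including the case of unbounded $I$ where positivity of the bare Laplacian fails and one must add a multiple of the identity. Once the right $\Lambda_0$ is in hand, the vector-valued upgrade is immediate by tensoring with $E$, and the identification of the interpolation space is a direct citation; but pinning down $\Lambda_0$ and matching its domain to the definition of $H^s_{(k)}$ across all admissible $s$ (not just integer $s$) is where the care is needed. I would therefore state this lemma with the hypotheses exactly as above and handle the non-integer $s$ by a further interpolation between consecutive integer cases, all of which is available in \cite{Lions1972}.
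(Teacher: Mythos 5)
Your proposal shares the paper's overall two-step structure (reduce to the scalar case, then lift to $E$-valued functions by a tensorization argument; the paper does exactly this via its Lemma~\ref{le:tensor_hilbert}), and your appeal to the domain-of-fractional-powers definition of $[X,Y]_\theta$ is the right framework. For \eqref{eq:interpolation1D_1} the gap is mainly a matter of citation: Lions--Magenes Chapter~1, Section~11 only identifies $[H^m(\Omega),L^2(\Omega)]_\theta$ and $[H^m_0(\Omega),L^2(\Omega)]_\theta$, i.e.\ the unconstrained and the full Dirichlet scales. Here one needs the \emph{mixed} spaces $H^s_{(k)}(I)$, where for $s>k+1$ the trace conditions stop at order $k$ even though $H^s$-functions have more traces; that is not covered by Theorems~11.6--11.7 of \cite{Lions1972}. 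The correct one-dimensional scalar input is Grisvard's Théorème~8.1 in \cite{Grisvard1967}, which is what the paper cites. With that substitution, your argument for \eqref{eq:interpolation1D_1} becomes sound and essentially coincides with the paper's.

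For \eqref{eq:interpolation1D_2}, however, there is a genuine gap. Your primary plan is to realize $H^s_\#(I)$ as the domain of a self-adjoint operator encoding the condition $u'(0)=0$ at the interior point $0\in I$ and then ``invoke the corresponding one-dimensional interpolation identity from \cite{Lions1972}.'' No such identity exists in the cited references: $H^s_\#$ is a nonstandard space with an interior one-sided Neumann constraint, and neither Lions--Magenes nor Grisvard state an interpolation theorem for it that you could simply quote. The paper instead proves the nontrivial inclusion $H^{(1-\theta)s}_\#(I)\subset[H^s_\#(I),L^2(I)]_\theta$ by hand, using the trace characterization of \cite[Definition~2.2]{Grisvard1967}: given $f\in H^{(1-\theta)s}_\#(I)$, one constructs $u\in L^2(\R^+,H^s_\#(I))\cap H^{1/(2\theta)}(\R^+,L^2(I))$ with $u(0)=f$ by building $u^\pm$ on $I\cap\R^\pm$ separately via Grisvard's Théorème~7.2, prescribing compatible data $g_j^\pm$ at $x=0$ so that the glued $u$ lies in $H^s_\#(I)$ slice-wise; and then removes the auxiliary hypotheses $s-\frac12\notin\N$, $(1-\theta)s-\frac12\notin\N$ by a reiteration argument using \cite[Theorems~6.1 and 13.3]{Lions1972}. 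Your alternative sketch (split $I$ at $0$ and impose matching conditions) is in the right spirit, but you neither carry out the construction nor address the reiteration step needed to handle the half-integer exponents excluded only at the final level ($s=\frac32$ and $(1-\theta)s=\frac32$, but not $s-\frac12\in\{2,3,\dots\}$). Without these ingredients the proof of \eqref{eq:interpolation1D_2} is incomplete.
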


\begin{proof}

If $E=\R$ then \eqref{eq:interpolation1D_1} is a special case of \cite[Théorème 8.1]{Grisvard1967}.
The Hilbert-space-valued case follows from  a simple general tensorization argument, see Lemma~\ref{le:tensor_hilbert} below.

For \eqref{eq:interpolation1D_2}  it suffices again to consider the case $E= \R$.
The inclusion ``$\subset$'' is straightforward. For the  converse inclusion we adapt the strategy from \cite{Grisvard1967}. Our goal is to construct for any given $f\in H^{(1-\theta)s}_{(k)}(I)$ some $u\in L^2(\R^+,H^s_{(k)}(I)\cap H^{\frac{1}{2\theta}}(\R^+,L^2(I))$ with $u(\cdot,0)=f$ (cf. \cite[Definition 2.2]{Grisvard1967}).

If $s < \frac32$ then $H^s_\#(I) = H^s(I)$ and the assertion follows by standard results. Thus we may assume $s > \frac32$.\\
We first assume that $s-\frac12\notin\N$ and $(1-\theta) s - \frac12 \notin \N$.
We first define the extension $u$ of $f$
on $I \cap \R^+$ and $I \cap R^-$ separately. Let $\eta \in C^\infty([0,1))$ with $\eta = 1$ on $[0, \frac12]$ and set, for $x \in I \cap \R^+$,
\begin{align*}
f_0^\pm(x) &= f(x), \\
f_k &= 0 \quad \text{for }1 \le k < \frac{1}{2\theta} -
\frac12\\
g_1^\pm(0, y) &= 0 ,\\
g_j^\pm &=(\pm 1)^{j} \frac{\partial^j f}{\partial \nu^j}(0) \eta(y) \quad \text{for }j < (1-\theta)s - \frac12\text{ and }j \ne 1,\\
 g^\pm_j(0,y) &= 0 \quad \quad \text{for } (1-\theta) s - \frac12 \le  j < s - \frac12.
\end{align*}
Then, the compatibility conditions in  \cite[Théorème 7.2]{Grisvard1967} are satisfied and thus there
exist
\[u^\pm \in L^2(\R^+, H^s(I \cap \R^\pm)) \cap H^{\frac1{2\theta}}(\R^+,L^2(I \cap \R^\pm))\]
such that
\begin{align*}
u^\pm(0,x) &= f(x),\\
\frac{\partial^j}{\partial \nu^j}   u^\pm(y,0) &= g_j^\pm(y).
\end{align*}
Set $u(y,x) = u^\pm(y,x)$ for $\pm x > 0$. Then, in particular $u'(y, 0) = 0$.
The condition $f \in H^{(1-\theta)s}$ implies that $(-1)^j g_j^- = g_j^+$ for $j < (1-\theta) s - \frac12$
and hence this condition holds for all $j$.
Because $s \notin \N_0 + \frac12$ it follows that $u(\cdot, y) \in H^s(I)$ for all $y > 0$ and
in fact $u(\cdot, y) \in H^s_\#(I)$. Thus,
\[u \in L^2(\R^+, H^s_\#(I)) \cap H^{\frac1{2\theta}}(\R^+,L^2(I ))\]
and $u(0,x) = f(x)$. By \cite[Definition 2.2]{Grisvard1967} we deduce that $f\in\left[H^s_\#(I),L^2(I)\right]_\theta$, and this concludes the proof of \eqref{eq:interpolation1D_2}.

It remains to remove the assumptions $(1-\theta) s - \frac12 \notin\N$ and $s-\frac12\notin\N$. This can easily be handled by using \cite[Theorem 13.3]{Lions1972} and reiteration. For the convenience of the reader we give the details.

Consider the case $s-\frac12\in\N$, but $(1-\theta) s - \frac12 \notin\N$. Let $s_*>s>\frac32$ be such that $s_*-\frac12\notin\N$ and let $\theta_*$ be such that $s=(1-\theta_*) s_*$. By the reiteration theorem \cite[Theorem 6.1]{Lions1972} we have
\[\left[H^s_\#(I),L^2(I)\right]_\theta=\left[\left[H^{s_*}_\#(I),L^2(I)\right]_{\theta_*},L^2(I)\right]_\theta=\left[H^{s_*}_\#(I),L^2(I)\right]_{\theta+\theta_*-\theta\theta_*}\]
and the right-hand side equals $H^{(1-\theta)s}_\#(I)$ by what we have already shown (note that $(1-(\theta+\theta_*-\theta\theta_*))s_*=(1-\theta)s$).

Next consider the remaining case that $(1-\theta) s - \frac12 \in\N$. Choose $\theta_-<\theta<\theta_+$ close enough to $\theta$ such that $(1-\theta_\pm) s - \frac12 \notin\N$ and $\frac32\notin[(1-\theta_+)s,(1-\theta_-)s]$. Let $\tilde\theta$ be such that $\theta=(1-\tilde\theta)\theta_-+\tilde\theta\theta_+$. Again by reiteration and the previous results we have
\[\left[H^s_\#(I),L^2(I)\right]_\theta=\left[\left[H^s_\#(I),L^2(I)\right]_{\theta_-},\left[H^s_\#(I),L^2(I)\right]_{\theta_+}\right]_{\tilde\theta}=\left[H^{(1-\theta_-)s}_\#(I),H^{(1-\theta_+)s}_\#(I)\right]_{\tilde\theta}\]
and it suffices to show that the right-hand side equals $H^{(1-\tilde\theta)(1-\theta_-)s+\tilde\theta(1-\theta_+)s}_\#(I)=H^{(1-\theta)s}_\#(I)$.
To that end, observe that \[H^t_\#(I)=\begin{cases}\{f\in H^t(I)\colon f'(0)=0\}&\mbox{for $t>\frac32$},\\H^t(I)&\mbox{for $t<\frac32$},\end{cases}\] is a closed subspace of $H^t(I)$ of finite codimension for any $t\ne\frac32$. Now \cite[Theorem 13.3]{Lions1972} implies that for $t<t'<\frac32$ or $\frac32<t<t'$ and $\hat\theta\in[0,1]$ we have
\[\left[H^{t'}_\#(I),H^t_\#(I)\right]_{\hat\theta}=H^{(1-\hat\theta)t'+\hat\theta t}_\#(I).
\]
In particular,
\[\left[H^{(1-\theta_-)s}_\#(I),H^{(1-\theta_+)s}_\#(I)\right]_{\tilde\theta}=H^{(1-\tilde\theta)(1-\theta_-)s+\tilde\theta(1-\theta_+)s}_\#(I).
\]
This completes the proof.

\end{proof}

\begin{lemma} \label{le:tensor_hilbert}
Let $X \subset Y$ be Hilbert spaces of real-valued functions, as above. Let $E$ be a separable Hilbert space
and denote by $X \otimes E$ and $Y \otimes E$ the corresponding spaces of $E$-valued functions.
Then, for all $\theta \in (0,1)$,
\[
[X \otimes E, Y \otimes E]_\theta = [X,Y]_\theta \otimes E,
\]
with equivalent norms.
Here for a Hilbert space $Z$ of real-valued functions the  scalar product on $Z \otimes E$ is defined as follows.
If $(e_m)_{m=1}^\infty$ is an orthonormal basis of $E$ and $f = \sum_{m=1}^\infty f_m e_m$, $g = \sum_{m=1}^\infty g_m e_m$, then
\[ (f,g)_{X \otimes E} = \sum_{m=1}^\infty (f_m, g_m)_X.\]
\end{lemma}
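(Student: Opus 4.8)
The plan is to reduce everything to the spectral definition of interpolation spaces given at the start of this appendix. Recall that $[X,Y]_\theta = D(\Lambda^{1-\theta})$, where $\Lambda$ is a fixed self-adjoint positive operator on $Y$ with domain $X$. The key observation is that $\Lambda$ induces, in an essentially tautological way, a self-adjoint positive operator $\Lambda \otimes I$ on $Y \otimes E$ with domain $X \otimes E$: indeed, writing a vector $f = \sum_m f_m e_m \in Y\otimes E$, one sets $(\Lambda\otimes I)f := \sum_m (\Lambda f_m) e_m$, with domain those $f$ for which $\sum_m \|\Lambda f_m\|_Y^2 < \infty$, which is exactly $X\otimes E$ by the definition of the scalar product there. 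First I would verify that $\Lambda\otimes I$ is self-adjoint and positive; this is immediate from the functional calculus applied slicewise, since for any Borel function $\varphi$ one has $\varphi(\Lambda\otimes I)f = \sum_m \varphi(\Lambda)f_m \, e_m$, and positivity/self-adjointness of $\Lambda$ pass to each slice.

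Next I would identify the fractional powers. From the slicewise functional calculus, $(\Lambda\otimes I)^{1-\theta} f = \sum_m (\Lambda^{1-\theta} f_m)\, e_m$, so its domain consists exactly of those $f$ with $\sum_m \|\Lambda^{1-\theta} f_m\|_Y^2 < \infty$, i.e. those $f$ with every slice $f_m \in D(\Lambda^{1-\theta}) = [X,Y]_\theta$ and $\sum_m \|f_m\|_{[X,Y]_\theta}^2 < \infty$ (using that the graph norm of $\Lambda^{1-\theta}$ is equivalent to $\|\cdot\|_{[X,Y]_\theta}$). By the definition of the tensor-product scalar product, this domain, with its graph norm, is precisely $[X,Y]_\theta \otimes E$. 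On the other hand, by the very definition of the interpolation space associated to the operator $\Lambda\otimes I$ on $Y\otimes E$, this same domain (with graph norm) is $[X\otimes E, Y\otimes E]_\theta$. Comparing the two gives the claimed equality with equivalent norms.

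The one genuine point requiring care — and the main obstacle — is the assertion that the interpolation space $[X\otimes E, Y\otimes E]_\theta$ does not depend, up to equivalent norms, on which admissible operator one uses to compute it; concretely, that one may legitimately use $\Lambda\otimes I$ rather than some a priori unrelated self-adjoint positive operator on $Y\otimes E$ with domain $X\otimes E$. This independence is exactly the standard fact (quoted from \cite[Section 2]{Lions1972} at the beginning of the appendix) that the space $D(M^{1-\theta})$ is, up to equivalence of norms, independent of the admissible operator $M$; so it suffices to check that $\Lambda\otimes I$ is itself admissible, i.e. self-adjoint, positive, with domain exactly $X\otimes E$ and with $X\otimes E$ dense in $Y\otimes E$ and continuously included — all of which follow slicewise from the corresponding properties of $\Lambda$, $X$, $Y$. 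Once this is in place, the rest is the bookkeeping above. I would therefore organize the write-up as: (1) construct $\Lambda\otimes I$ and check admissibility; (2) compute $D((\Lambda\otimes I)^{1-\theta})$ slicewise and recognize it as $[X,Y]_\theta\otimes E$; (3) invoke the independence-of-operator principle to conclude.
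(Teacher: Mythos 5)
Your proposal is correct, and it takes a genuinely different route from the paper's. You work directly from the spectral definition $[X,Y]_\theta = D(\Lambda^{1-\theta})$ stated at the start of the appendix: you build the admissible operator $\Lambda\otimes I$ on $Y\otimes E$, observe that the functional calculus acts slicewise, and read off $D((\Lambda\otimes I)^{1-\theta}) = [X,Y]_\theta\otimes E$ with equal graph norms; the only external input is the operator-independence of $D(M^{1-\theta})$, which the appendix already quotes from \cite[Section 2]{Lions1972}. The paper instead argues via the trace-space characterization (\cite[Definition 2.2]{Grisvard1967}): $a\in[X,Y]_\theta$ iff there exists $u\in L^2(\R^+,X)\cap H^{1/(2\theta)}(\R^+,Y)$ with $u(0)=a$, and both inclusions are proved by assembling slicewise lifts $u_m$ and controlling the squared sums up to a factor $(1+\delta)^2$. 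Your spectral argument is shorter and sharper — it yields exact (not merely equivalent) norms for the canonical choice $\Lambda\otimes I$, and it avoids introducing the lifting machinery altogether; the paper's trace approach is less economical here but has the advantage of using the same Grisvard–Lions framework that carries the burden in the proof of Lemma~\ref{l:interpolation1D}, so the appendix proceeds uniformly with a single toolkit. The one point you should state explicitly in a final write-up, which you gesture at, is that the domain of $\Lambda\otimes I$ really is $X\otimes E$: this uses that the graph norm of $\Lambda$ on $X=D(\Lambda)$ is equivalent to $\|\cdot\|_X$ (closed graph theorem), so $\sum_m\|f_m\|_X^2<\infty$ is equivalent to $\sum_m(\|f_m\|_Y^2+\|\Lambda f_m\|_Y^2)<\infty$.
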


\begin{proof} To show the inclusion ``$\supset$'' let $a \in [X,Y]_\theta \otimes E$ and $\delta > 0$.
Then $a_m \in [X,Y]_\theta$ and
by \cite[Definition 2.2]{Grisvard1967})  there exist
 $u_m \in L^2(\R^+, X) \cap H^{\frac1{2 \theta}}(\R^+,Y)$ with $a_m = u_m(0)$ and
\[ \|u_m\|_{L^2(\R^+, X)} + \|u_m\|_{H^{\frac1{2 \theta}}(\R^+,Y)} \le
(1+ \delta)  \| a_m\|_{[X, Y]_\theta},\qquad m=1,2,\ldots.
\]
Taking the square and summing over $m$ we see that
\[
S := \sum_{m=1}^\infty \|u_m\|_{L^2(\R^+, X)}^2 + \|u_m\|_{H^{\frac1{2 \theta}}(\R^+,Y)}^2 \le (1+ \delta)^2 \| a \|^2_{[X, Y]_\theta \otimes E}.\]
Set $u := \sum_{m=1}^\infty u_m$. Since $S < \infty$ we see that $u \in
L^2(\R^+, X \otimes E) \cap H^{\frac1{2 \theta}}(\R^+,Y \otimes E)$.
Thus $a = u(0) \in [X \otimes E, Y \otimes E]_\theta$
and
\[ \|a\|_{[X \otimes E, Y \otimes E]_\theta}^2 \le 2 S \le 2 (1+ \delta)^2 \|a\|_{[X,Y]_\theta \otimes E}^2.\]

The proof of the converse inclusion, ``$\subset$'',  is similar; nevertheless we include it here for the sake of completeness. %

To that end, let $a \in [X \otimes E, Y \otimes E]_\theta$.
Then there is a $u \in L^2(\R^+, X \otimes E) \cap H^{\frac1{2 \theta}}(\R^+,Y \otimes E)$ with
$u(0) = a$ and
\[
\| u \|_{L^2(\R^+, X \otimes E)}
+ \|u \|_{H^{\frac1{2 \theta}}(\R^+,Y \otimes E)} \le (1 + \delta) \|a\|_{ [X \otimes E, Y \otimes E]_\theta}.
\]
In particular $u_m = (u,e_m)_E$ satisfies
$u_m \in  L^2(\R^+, X) \cap H^{\frac1{2 \theta}}(\R^+,Y)$, $m=1,2,\dots$.
Thus $a_m = u_m(0) \in  [X , Y ]_\theta$, and we have
\begin{align*}
 \|a\|_{ [X, Y]_\theta \otimes E}^2 & =
 \sum_{m=1}^\infty  \|a_m\|^2_{[X,Y]_\theta}\\
 &\le  2  \sum_{m=1}^\infty \|u_m\|_{L^2(\R^+, X)}^2 + \|u_m\|_{H^{\frac1{2 \theta}}(R^+,Y)}^2 \\
& =  2 \|u\|_{L^2(\R^+, X)}^2 + 2  \|u\|_{H^{\frac1{2 \theta}}(R^+,Y)}^2 \\
&\le 2 (1+\delta)^2  \|a\|_{ [X \otimes E, Y \otimes E]_\theta}^2.
\end{align*}
That completes the proof of the lemma.
\end{proof}

Now we can establish the desired interpolation results in higher dimensions. For the following lemma, we are interested in the cases $\Xi=\Omega$ and $k=1$ or $\Xi=(0,\infty)^n$ and $k=0$.

\begin{lemma}\label{l:interpolation}
Let $\Xi=I_1\times\cdots\times I_n$, where $I_j\subset\R$ are (possibly unbounded) open intervals. Let $K=\{k_1,\ldots,k_m\}\subset\N_0$. Let $s\ge t\ge0$, and let $\theta\in[0,1]$. If none of $s-\frac12$, $t-\frac12$ and $(1-\theta)s+\theta t-\frac12$ are in $\{0,1,\ldots k\}$, then
\begin{equation}\label{eq:interpolation1}
\left[H^s_{(k)}(\Xi),H^t_{(k)}(\Xi)\right]_\theta=H^{(1-\theta)s+\theta t}_{(k)}(\Xi),
\end{equation}
and, in particular, if $s\notin\{0,1,\ldots k\}$, $(1-\theta)s-\frac12\notin\{0,1,\ldots k\}$, then
\begin{equation}\label{eq:interpolation2}
\left[H^s_{(k)}(\Xi),L^2(\Xi)\right]_\theta=H^{(1-\theta)s}_{(k)}(\Xi).
\end{equation}
\end{lemma}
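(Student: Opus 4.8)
The plan is to establish \eqref{eq:interpolation2} first, for all $s$ and $\theta$ with $s-\tfrac12$ and $(1-\theta)s-\tfrac12$ outside $\{0,1,\dots,k\}$, by induction on the space dimension $n$, and then to derive \eqref{eq:interpolation1} from it by a reiteration argument. The cases $\theta\in\{0,1\}$ being trivial, I assume $\theta\in(0,1)$. The induction starts at $n=1$, where \eqref{eq:interpolation2} is precisely Lemma \ref{l:interpolation1D} with $E=\R$ (for $s<1$ one has $H^s_{(k)}(I)=H^s_0(I)$ and the claim is classical).

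For the inductive step I write $\Xi=\Xi'\times I$ with $\Xi'=I_1\times\cdots\times I_{n-1}$ and $I=I_n$, and the geometric ingredient I need is the anisotropic identification, valid with equivalent norms for every $\sigma\ge0$ with $\sigma-\tfrac12\notin\{0,\dots,k\}$,
\begin{equation}\label{eq:aniso}
	H^\sigma_{(k)}(\Xi'\times I)=L^2(I;H^\sigma_{(k)}(\Xi'))\cap H^\sigma_{(k)}(I;L^2(\Xi')).
\end{equation}
I would deduce \eqref{eq:aniso} from the standard isotropic--anisotropic identity $H^\sigma(\Xi'\times I)=L^2(I;H^\sigma(\Xi'))\cap H^\sigma(I;L^2(\Xi'))$ (which holds on $\R^n$ because $(1+|\xi'|^2+|\xi_n|^2)^\sigma$ is comparable to $(1+|\xi'|^2)^\sigma+(1+|\xi_n|^2)^\sigma$, and descends to $\Xi'\times I$ by means of a Sobolev extension operator bounded on all $H^\sigma$) by keeping track of the vanishing-trace conditions: using the smooth-function description of $H^\sigma_{(k)}$ from Lemma \ref{l:density}, one checks that the conditions on the faces $\{x_j=a\}$ with $j\le n-1$ are encoded precisely by membership in $L^2(I;H^\sigma_{(k)}(\Xi'))$, and those on the faces $\{x_n=a\}$ by membership in $H^\sigma_{(k)}(I;L^2(\Xi'))$.

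I would then interpolate both sides of \eqref{eq:aniso}, taken with exponent $s$, against $L^2(\Xi)=L^2(I;L^2(\Xi'))$. Writing $W_0:=L^2(I;H^s_{(k)}(\Xi'))$, $W_1:=H^s_{(k)}(I;L^2(\Xi'))$ and $Z:=L^2(I;L^2(\Xi'))$, Lemma \ref{le:tensor_hilbert} with $E=L^2(I)$ together with the inductive hypothesis gives $[W_0,Z]_\theta=L^2(I;[H^s_{(k)}(\Xi'),L^2(\Xi')]_\theta)=L^2(I;H^{(1-\theta)s}_{(k)}(\Xi'))$, while Lemma \ref{l:interpolation1D} with $E=L^2(\Xi')$ gives $[W_1,Z]_\theta=H^{(1-\theta)s}_{(k)}(I;L^2(\Xi'))$; and by \eqref{eq:aniso} with exponent $(1-\theta)s$ (admissible since $(1-\theta)s-\tfrac12\notin\{0,\dots,k\}$) the intersection of these two spaces is $H^{(1-\theta)s}_{(k)}(\Xi)$. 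So the inductive step comes down to the claim that interpolation commutes with this intersection, namely $[W_0\cap W_1,Z]_\theta=[W_0,Z]_\theta\cap[W_1,Z]_\theta$. The inclusion ``$\subseteq$'' is immediate from the monotonicity of the interpolation functor, since $W_0\cap W_1=H^s_{(k)}(\Xi)$ embeds continuously into each $W_i$ inside the common ambient space $Z$. The reverse inclusion is the hard part: given $f$ in $[W_0,Z]_\theta\cap[W_1,Z]_\theta=H^{(1-\theta)s}_{(k)}(\Xi)$, one must produce a single trace extension $u\in L^2(\R^+;H^s_{(k)}(\Xi))\cap H^{\frac{1}{2\theta}}(\R^+;L^2(\Xi))$ with $u(0)=f$ that respects the boundary conditions in all $n$ coordinate directions simultaneously. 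This is exactly the situation treated by the Lions--Magenes theory of anisotropic intersection spaces in \cite[Chapter~1]{Lions1972}, whose trace-compatibility hypotheses are met because of the non-half-integrality of $s$ and of $(1-\theta)s$; that supplies the missing inclusion and closes the induction.

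Finally, to pass from \eqref{eq:interpolation2} to \eqref{eq:interpolation1}: for $s\ge t>0$ satisfying the stated conditions I would choose $\sigma>s$ with $\sigma-\tfrac12\notin\{0,\dots,k\}$, so that \eqref{eq:interpolation2} identifies both $H^s_{(k)}(\Xi)$ and $H^t_{(k)}(\Xi)$ as interpolation spaces between $H^\sigma_{(k)}(\Xi)$ and $L^2(\Xi)$; the reiteration theorem \cite[Theorem 6.1]{Lions1972} followed by a second application of \eqref{eq:interpolation2} then gives $[H^s_{(k)}(\Xi),H^t_{(k)}(\Xi)]_\theta=H^{(1-\theta)s+\theta t}_{(k)}(\Xi)$, the case $t=0$ being \eqref{eq:interpolation2} itself. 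I expect the main obstacle to be the commutation of interpolation with the intersection in the inductive step; a secondary, technical but essentially routine point will be the careful verification of the trace matching along the lower-dimensional faces of $\Xi$ that underlies \eqref{eq:aniso}.
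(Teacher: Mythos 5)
Your strategy is the same as the paper's: derive \eqref{eq:interpolation1} from \eqref{eq:interpolation2} by reiteration, prove \eqref{eq:interpolation2} by induction on $n$ via the anisotropic identification $H^s_{(k)}(\Xi'\times I)=L^2(I;H^s_{(k)}(\Xi'))\cap H^s_{(k)}(I;L^2(\Xi'))$, and reduce to showing that interpolation commutes with that intersection. The two interpolations of the factors $W_0, W_1$ against $Z$ are also handled as in the paper, modulo the cosmetic substitution of Lemma~\ref{le:tensor_hilbert} for \cite[Remark 14.4]{Lions1972}.

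The one place where your proposal does not yet close is exactly the step you flagged as the main obstacle: the inclusion $[W_0,Z]_\theta\cap[W_1,Z]_\theta\subseteq[W_0\cap W_1,Z]_\theta$. Your suggestion to produce directly a trace extension $u$ of $f$ satisfying the boundary conditions in all $n$ coordinate directions at once, and to invoke ``Lions--Magenes trace-compatibility hypotheses,'' conflates two different things: the non-half-integrality of $s$ and $(1-\theta)s$ is what is needed for the anisotropic identity \eqref{eq:aniso} itself, not for the commutation of interpolation with intersection, and the trace-extension route is essentially a restatement of what must be proved rather than a resolution of it. The clean mechanism is \cite[Theorem 13.1]{Lions1972}: $W_0=L^2(I_n;H^s_{(k)}(\Xi'))$ and $W_1=H^s_{(k)}(I_n;L^2(\Xi'))$ are the domains of two unbounded positive self-adjoint operators $\Lambda_1,\Lambda_2$ on $Z=L^2(I_n;L^2(\Xi'))$, the first acting only in the $x'$ variables (independently of $x_n$) and the second only in $x_n$ (independently of $x'$), so $\Lambda_1$ and $\Lambda_2$ commute, and the cited theorem then gives $\bigl[\,W_0\cap W_1,\,Z\,\bigr]_\theta=[W_0,Z]_\theta\cap[W_1,Z]_\theta$ with no further compatibility hypotheses. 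Replacing your trace-extension heuristic with this commuting-operator criterion makes the inductive step rigorous and completes the argument.
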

\begin{proof}
The identity \eqref{eq:interpolation1} immediately follows from \eqref{eq:interpolation2} and reiteration, so it suffices to establish \eqref{eq:interpolation2}.

We proceed by induction on $n$. The case $n=1$ was established in Lemma \ref{l:interpolation1D}. Now assume that the theorem holds for $n-1$ dimensions. The following argument is similar to the one in Section 2.1 in \cite{Lions1972b}.

Let $\Xi'=I_1\times\cdots\times I_{n-1}$, and write $x=(x',x_n)$. If we interpret a function $\Xi\to\R$ as a function $I_n\to(\Xi'\to\R)$, we claim that
\begin{equation}\label{eq:interpolation3}
L^2(\Xi)=L^2(I_n,L^2(\Xi'))
\end{equation}
and
\begin{equation}\label{eq:interpolation4}
H^s_{(k)}(\Xi)=L^2(I_n,H^s_{(k)}(\Xi'))\cap H^s_{(k)}(I_n,L^2(\Xi')).
\end{equation}
Indeed, \eqref{eq:interpolation3} is obvious. For \eqref{eq:interpolation4} one can argue as follows. It is well-known (and can be proved using the Fourier transform, for example,) that
\begin{equation}\label{eq:interpolation5}
H^s(\R^n)=L^2(\R,H^s(\R^{n-1}))\cap H^s(\R,L^2(\R^{n-1})).
\end{equation}
The sets $\Xi'$ and $I_n$ have Lipschitz boundary, and so there exists an extension operator $E$, mapping $H^t(\Xi)$ continuously to $H^t(\R^n)$ for $t\in\{0,s\}$, that also maps $H^t(\Xi'\times\{x_n\})$ continuously to $H^t(\R^{n-1}\times\{x_n\})$ for any $x_n\in I_n$. One can construct such an $E$ by first applying an appropriate extension operator on each slice $\Xi'\times\{x_n\}\subset\R^{n-1}\times\{x_n\}$ and then extending in the direction $e_n$. Using this extension operator, one can easily check that \eqref{eq:interpolation5} implies also that
\[H^s(\Xi)=L^2(I_n,H^s(\Xi'))\cap H^s(I_n,L^2(\Xi')).\]
From this we want to deduce \eqref{eq:interpolation4} by considering the faces of $\Xi$ separately. We begin with ``$\subset$'' in \eqref{eq:interpolation4}. Let $u\in H^s_{(k)}(\Xi)$, and take $j\le n-1$. Let $\Xi_{j,\pm}'\times I_n$ be the two faces of $\Xi$ orthogonal to $e_j$. By assumption the trace of $\partial_j^iu$ for $i\le k$ vanishes on $\Xi_{j,\pm}'\times I_n$ as an element of $H^{s-j-1/2}(\Xi_{j,\pm}'\times I_n)$ and thus also as an element of $L^2(I_n,H^{s-j-1/2}(\Xi_{j,\pm}')$. In particular, for almost every $x_n$ the trace of $\partial_j^iu(\cdot,x_n)$ vanishes on $\Xi_{j,\pm}'\times \{x_n\}$. This holds for all $j\le n-1$ and all $i\le k$, and so $u\in L^2(I_n,H^s_{(k)}(\Xi'))$. We can argue similarly for the case $j=n$ to deduce that $u\in H^s_{(k)}(I_n,L^2(\Xi'))$ and have thus shown ``$\subset$'' in \eqref{eq:interpolation4}. The argument for ``$\supset$'' is analogous. Thus we have established \eqref{eq:interpolation4}.

Now $L^2(I_n,H^s_{(k)}(\Xi'))$ is the domain of an unbounded positive operator $\Lambda_1$ on $L^2(I_n,L^2(\Xi'))$, and $\Lambda_1$ is an operator in $x'$, independent of $x_n$. Similarly, $H^s_{(k)}(I_n,L^2(\Xi'))$ is the domain of an unbounded positive operator $\Lambda_2$ on $L^2(I_n,L^2(\Xi'))$, and $\Lambda_2$ is an operator in $x'$, independent of $x_n$. In particular, $\Lambda_1$ and $\Lambda_2$ commute. Thus we can apply the criterion for the interpolation space of an intersection \cite[Theorem 13.1]{Lions1972} and obtain that
\begin{align*}
&\left[H^s_{(k)}(\Xi),L^2(\Xi)\right]_\theta\\
&\quad=\left[L^2(I_n,H^s_{(k)}(\Xi'))\cap H^s_{(k)}(I_n,L^2(\Xi')),L^2(I_n,L^2(\Xi'))\right]_\theta\\
&\quad=\left[L^2(I_n,(H^s_{(k)}(\Xi')),L^2(I_n,L^2(\Xi'))\right]_\theta\cap\left[H^s_{(k)}(I_n,L^2(\Xi')),L^2(I_n,L^2(\Xi'))\right]_\theta.
\end{align*}
Now, according to \cite[Remark 14.4]{Lions1972} and using the induction hypothesis \eqref{eq:interpolation2} for $n-1$, we have
\begin{align*}\left[L^2(I_n,H^s_{(k)}(\Xi')),L^2(I_n,L^2(\Xi'))\right]_\theta&=L^2\left(I_n,\left[H^s_{(k)}(\Xi'),L^2(\Xi')\right]_\theta\right)\\
&=L^2(I_n,H^{(1-\theta)s}_{(k)}(\Xi')).
\end{align*}
Similarly, using \eqref{eq:interpolation1D_1}, we find
\[
\left[H^s_{(k)}(I_n,L^2(\Xi')),L^2(I_n,L^2(\Xi'))\right]_\theta=H^{(1-\theta)s}_{(k)}(I_n,L^2(\Xi')).
\]
If we combine the last three equalities, we deduce that
\begin{align*}
\left[H^s_{(k)}(\Xi),L^2(\Xi)\right]_\theta&=L^2(I_n,H^{(1-\theta)s}_{(k)}(\Xi'))\cap H^{(1-\theta)s}_{(k)}(I_n,L^2(\Xi'))\\
&=H^{(1-\theta)s}_{(k)}(\Xi).
\end{align*}
That completes the proof of the lemma.
\end{proof}

For the next lemma recall the definition of $G^s$ from the proof of Lemma \ref{l:estboundaryvalues2}.

\begin{lemma}\label{l:interpolation2}
Let $s\ge t\ge0$, and let $\theta\in(0,1)$. Then, if none of $s$, $t$ and $(1-\theta)s+\theta t$ are in $\left\{\frac12,\frac32\right\}$, we have
\begin{equation}\label{eq:interpolation2_1}
\left[G^s((0,\infty)^{n-1}\times\R),G^t((0,\infty)^{n-1}\times\R)\right]_\theta=G^{(1-\theta)s+\theta t}((0,\infty)^{n-1}\times\R),
\end{equation}
and in particular, if $s\notin\left\{\frac12,\frac32\right\}$, $(1-\theta)s\notin\left\{\frac12,\frac32\right\}$, then
\begin{equation}\label{eq:interpolation2_2}
\left[G^s((0,\infty)^{n-1}\times\R),L^2((0,\infty)^{n-1}\times\R)\right]_\theta=G^{(1-\theta)s}((0,\infty)^{n-1}\times\R).
\end{equation}
\end{lemma}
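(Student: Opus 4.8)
The plan is to follow the blueprint of the proof of Lemma~\ref{l:interpolation} almost verbatim, exploiting the product structure of the domain in the last variable. As there, the identity \eqref{eq:interpolation2_1} follows from \eqref{eq:interpolation2_2} by the reiteration theorem \cite[Theorem 6.1]{Lions1972}, so it suffices to prove \eqref{eq:interpolation2_2}.

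The key step will be a tensorization identity. Writing $\Xi':=(0,\infty)^{n-1}$ and $x=(x',x_n)$, I claim that
\[
G^s(\Xi'\times\R)=L^2\bigl(\R,H^s_{(0)}(\Xi')\bigr)\cap H^s_\#\bigl(\R,L^2(\Xi')\bigr)
\]
for all $s\notin\{\tfrac12,\tfrac32\}$, under the conventions $H^t_{(0)}=H^t$ for $t<\tfrac12$ and $H^t_\#=H^t$ for $t<\tfrac32$ (so that the right-hand side makes sense throughout). To establish it I would proceed exactly as in the proof of Lemma~\ref{l:interpolation}: start from the product characterization $H^s(\Xi'\times\R)=L^2(\R,H^s(\Xi'))\cap H^s(\R,L^2(\Xi'))$, obtained from the corresponding identity on $\R^n$ by means of a slicewise extension operator, and then impose the boundary and interface conditions face by face. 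The lateral conditions $u=0$ on $\{x_i=0\}$, $i\le n-1$, are equivalent, by the same trace-versus-slicing discussion as in Lemma~\ref{l:interpolation}, to $u(\cdot,x_n)\in H^s_{(0)}(\Xi')$ for a.e.\ $x_n$, i.e.\ to $u\in L^2(\R,H^s_{(0)}(\Xi'))$; and, when $s>\tfrac32$, the interface condition $\partial_nu=0$ on $\Xi'\times\{0\}$, read as a condition on the $L^2(\Xi')$-valued function $x_n\mapsto u(\cdot,x_n)$, is precisely the defining condition of $H^s_\#(\R,L^2(\Xi'))$. For $\tfrac12<s<\tfrac32$ the $\#$-factor is simply $H^s(\R,L^2(\Xi'))$, and for $s<\tfrac12$ neither factor carries a constraint, so all three regimes are covered.

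Given this identity, the argument of Lemma~\ref{l:interpolation} applies with no essential change. The two spaces $L^2(\R,H^s_{(0)}(\Xi'))$ and $H^s_\#(\R,L^2(\Xi'))$ are the domains of positive self-adjoint operators $\Lambda_1$ and $\Lambda_2$ on $L^2(\R,L^2(\Xi'))$, with $\Lambda_1$ acting only in $x'$ and $\Lambda_2$ only in $x_n$; hence they commute, and \cite[Theorem 13.1]{Lions1972} lets one interpolate the intersection factorwise. By \cite[Remark 14.4]{Lions1972} together with Lemma~\ref{l:interpolation} applied in $n-1$ variables with $k=0$ (legitimate since $s\ne\tfrac12$ and $(1-\theta)s\ne\tfrac12$), the $x'$-factor interpolates to $L^2(\R,H^{(1-\theta)s}_{(0)}(\Xi'))$; by \eqref{eq:interpolation1D_2} when $s\ge1$ (using $s\ne\tfrac32$, $(1-\theta)s\ne\tfrac32$), and by the elementary identity $[H^s(\R,E),L^2(\R,E)]_\theta=H^{(1-\theta)s}(\R,E)$ when $s<1$, the $x_n$-factor interpolates to $H^{(1-\theta)s}_\#(\R,L^2(\Xi'))$. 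Intersecting and invoking the tensorization identity once more, now with exponent $(1-\theta)s$, yields $G^{(1-\theta)s}(\Xi'\times\R)$, which is \eqref{eq:interpolation2_2}.

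The step I expect to be most delicate is the tensorization identity, specifically checking that imposing the lateral Dirichlet traces commutes with passing to $x_n$-slices, and that the trace of $\partial_nu$ on $\Xi'\times\{0\}$ vanishes exactly when the $L^2(\Xi')$-valued function $x_n\mapsto u(\cdot,x_n)$ satisfies the $\#$-condition at $0$ — and doing this consistently across the three ranges of $s$, where the definitions of $G^s$ and of the factors on the right-hand side change. Once that identity is secured, the remainder is routine bookkeeping with the operator-theoretic interpolation tools already assembled for the proof of Lemma~\ref{l:interpolation}.
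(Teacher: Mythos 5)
Your proposal follows essentially the same route as the paper's own proof: establish the tensorization identity $G^s(\Xi'\times\R)=L^2(\R,H^s_{(0)}(\Xi'))\cap H^s_\#(\R,L^2(\Xi'))$, interpolate the intersection factorwise via the commuting positive operators $\Lambda_1,\Lambda_2$ and \cite[Theorem 13.1]{Lions1972}, then invoke Lemma~\ref{l:interpolation} for the $x'$-factor and Lemma~\ref{l:interpolation1D} for the $x_n$-factor, and finally intersect. You are in fact a bit more careful than the paper in flagging that Lemma~\ref{l:interpolation1D} as stated requires $s\ge1$, so the low-regularity range needs the elementary identity $[H^s(\R,E),L^2(\R,E)]_\theta=H^{(1-\theta)s}(\R,E)$ instead — a point the paper's proof leaves implicit.
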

\begin{proof}
As in the previous lemma, \eqref{eq:interpolation2_1} follows from \eqref{eq:interpolation2_2} and reiteration, so we will only prove \eqref{eq:interpolation2_2}. Observe that
\[G^s((0,\infty)^{n-1}\times\R)=L^2(\R,H^s_{(0)}((0,\infty)^{n-1}))\cap H^s_\#(\R,L^2((0,\infty)^{n-1})\]
and
\[L^2((0,\infty)^{n-1}\times\R)=L^2(\R,L^2((0,\infty)^{n-1})).\]

Intersection and interpolation commute by the same argument as in the proof of Lemma \ref{l:interpolation}, and so we have
\begin{align*}
&\left[G^s((0,\infty)^{n-1}\times\R),L^2((0,\infty)^{n-1}\times\R)\right]_\theta\\
&\quad=\left[L^2(\R,H^s_{(0)}((0,\infty)^{n-1}),L^2(\R,L^2((0,\infty)^{n-1}))\right]_\theta\cap \left[H^s_\#(\R,L^2((0,\infty)^{n-1}),L^2(\R,L^2((0,\infty)^{n-1}))\right]_\theta.
\end{align*}
Now, by Lemma \ref{l:interpolation} we have
\[\left[L^2(\R,H^s_{(0)}((0,\infty)^{n-1}),L^2(\R,L^2((0,\infty)^{n-1}))\right]_\theta=L^2(\R,H^{(1-\theta)s}_{(0)}((0,\infty)^{n-1}),\]
and Lemma \ref{l:interpolation1D} implies that
\[\left[H^s_\#(\R,L^2((0,\infty)^{n-1}),L^2(\R,L^2((0,\infty)^{n-1}))\right]_\theta=H^{(1-\theta)s}_\#(\R,L^2((0,\infty)^{n-1}).\]
The last three equalities combined imply \eqref{eq:interpolation2_2}.
\end{proof}

\textbf{Acknowledgements.} FS gratefully acknowledges the kind hospitality of Oxford University during a short visit in April 2018.\\
SM and FS were partially supported by the Excellence Clusters EX 59 and EX 247-1, the Hausdorff Center for Mathematics and the SFB 1060 'The Mathematics of Emergent Effects'. FS was also supported by the German National Academic Foundation.

\bibliographystyle{alpha}
\bibliography{optimal_error_biharmonic_mar29_2019}

\end{document}